\newcommand{\COLORON}{1}
\newcommand{\NOTESON}{0}
\newcommand{\Debug}{0}
\newcommand{\pl}{piecewise linear}
\newcommand{\ple}{piecewise linear embedding}
\newcommand{\pd}{properly discontinuous}
\newcommand{\lfl}{locally flat}
\newcommand{\CtS}{Cantor 3-sphere}
\newcommand{\vKT}{van Kampen's theorem}
\newcommand{\GeThm}{Geometrization Theorem}
\newcommand{\new}{\mymargin{New}}
\newcommand{\comment}[1]{}
\newcommand{\COMMENT}[1]{}
\definecolor{darkgray}{rgb}{0.3,0.3,0.3}
\newcommand{\defi}[1]{{\color{darkgray}\emph{#1}}}
\newcommand{\acknowledgement}{\section*{Acknowledgement}}
\newtheorem{proposition}{Proposition}[section]
\newtheorem{definition}[proposition]{Definition}
\newtheorem{theorem}[proposition]{Theorem}
\newtheorem{corollary}[proposition]{Corollary}
\newtheorem{lemma}[proposition]{Lemma}
\newtheorem{examp}[proposition]{Example}%[section]
\newtheorem{remark}{Remark}
\newtheorem{question}[proposition]{Question}
\newcommand{\FIG}{0}
\newcommand{\note}[1]{ 

\hspace*{-30pt}
	{\color{blue}  NOTE: \color{Turquoise}{\small  \tt \begin{minipage}[c]{1.1\textwidth}  #1 \end{minipage} \ignorespacesafterend }} 
	
	}
\else \newcommand{\note}[1]{} \fi
\newcommand{\afsubm}[1]{ \ifnum \Debug = 1 {\mymargin{#1}}
\fi} %For notes on after-submission changes
\newcommand{\fig}[1]{Figure ``{#1}''}
\else \newcommand{\fig}[1]{Figure~\ref{#1}} \fi
\renewcommand{\color}[1]{}
\newcommand{\R}{\ensuremath{\mathbb R}}
\newcommand{\Z}{\ensuremath{\mathbb Z}}
\newcommand{\BS}{\ensuremath{\mathbb S}}
\newcommand{\ce}{\ensuremath{\mathcal E}}
\newcommand{\cv}{\ensuremath{\mathcal V}}
\newcommand{\sm}{\backslash}
\newcommand{\isom}{\cong}
\newcommand{\cls}[1]{\ensuremath{\overline{#1}}}
\newcommand{\act}{\curvearrowright}
\DeclareRobustCommand{\cev}[1]{%
  \mathpalette\do@cev{#1}%
}
\newcommand{\do@cev}[2]{%
  \fix@cev{#1}{+}%
  \reflectbox{$\m@th#1\vec{\reflectbox{$\fix@cev{#1}{-}\m@th#1#2\fix@cev{#1}{+}$}}$}%
  \fix@cev{#1}{-}%
}
\newcommand{\fix@cev}[2]{%
  \ifx#1\displaystyle
    \mkern#23mu
  \else
    \ifx#1\textstyle
      \mkern#23mu
    \else
      \ifx#1\scriptstyle
        \mkern#22mu
      \else
        \mkern#22mu
      \fi
    \fi
  \fi
}
\newcommand{\pth}[2]{\ensuremath{#1}\text{--}\ensuremath{#2}~path}
\newcommand{\g}{\ensuremath{G\ }}
\newcommand{\G}{\ensuremath{G}}
\newcommand{\Cg}{Cayley graph}
\newcommand{\Cc}{Cayley complex}
\newcommand{\Lr}[1]{Lemma~\ref{#1}}
\newcommand{\Tr}[1]{Theorem~\ref{#1}}
\newcommand{\Sr}[1]{Section~\ref{#1}}
\newcommand{\Cr}[1]{Corollary~\ref{#1}}
\newcommand{\Dr}[1]{De\-fi\-nition~\ref{#1}}
\renewcommand{\iff}{if and only if}
\newcommand{\fe}{for every}
\newcommand{\Fe}{For every}
\newcommand{\st}{such that}
\newcommand{\ti}{there is}
\newcommand{\obda}{without loss of generality}
\newcommand{\wrt}{with respect to}
\newcommand{\tfae}{the following are equivalent}
\newcommand{\labtequ}[2]{%\labtequc{#1}{#2}}
 \begin{equation} \label{#1} 	\begin{minipage}[c]{0.9\textwidth}  #2 \end{minipage} \ignorespacesafterend \end{equation} } 
\newcommand{\labtequc}[2]{ \begin{equation} \label{#1} 	\text{   #2 } \end{equation} }
\newcommand{\mymargin}[1]{% <- dieses % verhindert ein ungewolltes Leerzeichen
 \ifnum \Debug = 1
  \marginpar{%
    \begin{minipage}{\marginparwidth}\small%
      \begin{flushleft}%
        {\color{blue}#1}%
      \end{flushleft}%
   \end{minipage}%
  }%
 \fi
}%
\newcommand{\mySection}[2]{}
\begin{document}
	\title{Discrete group actions on 3-manifolds and embeddable Cayley complexes}
	
\author[1]{Agelos Georgakopoulos\thanks{Supported by  ERC grant 639046 and EPSRC grants EP/V048821/1 and EP/V009044/1.}}
\affil[1,2]{  {Mathematics Institute}\\
 {University of Warwick}\\
  {CV4 7AL, UK}}
\author[2]{George Kontogeorgiou}

\date{\today}

\maketitle

\newcommand{\bsig}{\ensuremath{\boldsymbol{\sigma}}}

\newcommand{\sico}{simply connected}
\newcommand{\prs}{planar rotation system}
\newcommand{\gcc}{generalised \Cc}

\begin{abstract}
We prove that a group $\Gamma$ admits a discrete topological (equivalently, smooth) action on some simply-connected 3-manifold if and only if $\Gamma$  has a Cayley complex embeddable ---with certain natural restrictions--- in one of the following four 3-manifolds: (i) $\mathbb{S}^3$, (ii) $\mathbb{R}^3$, (iii)  $\mathbb{S}^2 \times \R$, (iv) the complement of a tame Cantor set in $\mathbb{S}^3$. 
\end{abstract}

{\bf{Keywords:} }3-manifold, Cayley complex, discrete action, Cantor sphere.\\

{\bf{MSC 2020 Classification:}} 57M60, 57S25, 57S30,  57S17, 57K30, 05E45. \\

\begin{center}{\begin{minipage}{.65 \linewidth}Dedicated to the memory of Mikis Theodorakis, whose oratorio {\it Axion Esti} rhymes with `{\it action $\BS^3$}'. 
\end{minipage}}
\end{center}

\section{Introduction}

The main result of this paper is that a group $\Gamma$ admits a discrete topological action on some simply-connected 3-manifold if and only if $\Gamma$  has a Cayley complex embeddable ---with certain natural restrictions--- in one of the following four 3-manifolds: (i) $\mathbb{S}^3$, (ii) $\mathbb{R}^3$, (iii)  $\mathbb{S}^2 \times \R$, (iv) the complement of a tame Cantor set in $\mathbb{S}^3$. Which of these four cases occurs is determined by the number of ends of $\Gamma$ \cite{Hopf}. By a \defi{discrete} action we mean a faithful, properly discontinuous, co-compact, topological action. All our manifolds and actions are topological, but they can be smoothed by a classical result of Bing and Moise \cite{bing1959,Moise52}, and a recent result of Pardon \cite{Pardon21, PardonMO} (see \Tr{pardon new}).

A homeomorphic image $C$ of the Cantor set in $\BS^3$ is called \defi{tame}, if it is contained in a piecewise linear arc. It is known that if $C'$ is another tame Cantor set in $\BS^3$, then $\BS^3 - C$ is homeomorphic to $\BS^3 - C'$; see \cite{SkoCan} and references therein. A topological space homeomorphic to $\BS^3 - C$ will be called a \defi{\CtS}. Its importance is established by the following result:

\begin{theorem} \label{Agol}
Let $M$ be a connected, simply connected, topological 3-manifold. Suppose $M$ admits a properly-discontinuous, co-compact action by homeomorphisms. Then $M$ is homeomorphic to one of the following four spaces:  
$\BS^3$, or $\R^3$, or $\BS^2 \times \R$, or the \CtS.
%(depending on whether $M$ has 0, 1, 2, or infinitely many ends).
\end{theorem}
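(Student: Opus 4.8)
The plan is to read off the homeomorphism type of $M$ from its number of ends. Since the action is \pd{} and cocompact, the \v{S}varc--Milnor lemma shows that $M$, with any proper metric, is quasi-isometric to $\Gamma$ with the word metric; in particular $M$ and $\Gamma$ have the same number of ends, which by the Freudenthal--Hopf theorem is $0$, $1$, $2$, or infinite, the last case having an end space that is a Cantor set. This is precisely the fourfold division attributed to \cite{Hopf} in the introduction. As a preliminary reduction I would invoke Moise's theorem and \Tr{pardon new} to smooth both $M$ and the action, so that the equivariant tools below (the sphere theorem, the prime decomposition, and the \GeThm) are available. It is worth stressing that the cocompact action is what excludes wild simply-connected $3$-manifolds such as the Whitehead manifold; without it the one-ended and infinitely-ended conclusions would both fail.

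I would then split according to $\pi_2(M)$. The vanishing case $\pi_2(M)=0$ subdivides by compactness. If $\Gamma$ is finite then $M$ is a closed simply-connected $3$-manifold with $\pi_2=0$, hence a homotopy sphere, hence $\BS^3$ by the Poincar\'e conjecture. If $\Gamma$ is infinite then $M$ is open, and being simply connected with $\pi_2=0$ and $H_3=0$ it is contractible (Hurewicz and Whitehead); thus $M$ is the contractible universal cover of the compact aspherical quotient orbifold $M/\Gamma$, and the \GeThm{} identifies the geometry of this orbifold. Among the eight model geometries the only contractible model spaces (Euclidean, hyperbolic, $\mathbb{H}^2\times\R$, $\widetilde{SL_2}$, Nil, and Sol) are all homeomorphic to $\R^3$, so $M\isom\R^3$; this is the one-ended case, where $\Gamma$ is a $3$-dimensional Poincar\'e duality group and therefore one-ended, matching the end count.

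The remaining cases have $\pi_2(M)\neq 0$, whence the equivariant sphere theorem of Meeks--Yau produces a $\Gamma$-invariant system $\mathcal{S}$ of disjoint essential embedded $2$-spheres. Dually, $M/\Gamma$ has a nontrivial Kneser--Milnor prime decomposition, and $\Gamma$ acts on the associated Bass--Serre tree with the prime factors as vertex groups. If $\Gamma$ is two-ended it is virtually $\Z$, the decomposition is of $\BS^2\times\BS^1$ type, the tree is a line, and reassembling the pieces along $\mathcal{S}$ gives $M\isom\BS^2\times\R$. If $\Gamma$ has infinitely many ends then by Stallings' theorem the splitting is over finite subgroups and the tree genuinely branches; each vertex contributes the universal cover of its prime factor, which by the previous cases is one of $\BS^3$, $\R^3$, $\BS^2\times\R$, i.e.\ a copy of $\BS^3$ with at most two points removed. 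Gluing these punctured spheres along $\mathcal{S}$ in the branching pattern of the tree accumulates the removed points to a Cantor set, exhibiting $M$ as a \CtS.

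I expect this final identification to be the main obstacle. One must show not merely that the removed end-set is a Cantor set, but that it is \emph{tame}, i.e.\ that it lies on a piecewise linear arc, since wild Cantor sets in $\BS^3$ have non-homeomorphic complements. This is exactly where cocompactness is indispensable: it bounds the combinatorial complexity of $\mathcal{S}$ within a single fundamental domain, so that the nested spheres shrink in a uniformly standard way across the whole tree. Verifying this uniform tameness---rather than the bare enumeration of the four homeomorphism types---is the technical heart of the theorem.
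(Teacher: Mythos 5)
Your overall architecture is the same as the paper's: smooth the action via Moise/Pardon, split on whether $\Gamma$ is finite or infinite and on whether $\pi_2$ vanishes, use the Poincar\'e conjecture for $\BS^3$, the \GeThm\ for $\R^3$, and the sphere theorem plus a connected-sum/tree analysis for $\BS^2\times\R$ and the \CtS. However, there are two genuine gaps. First, in the one-ended case you write that the \GeThm\ ``identifies the geometry'' of the compact aspherical quotient and then conclude $M\isom\R^3$ because the contractible model spaces are all homeomorphic to $\R^3$. This tacitly assumes the quotient is \emph{geometric}, i.e.\ modelled on a single Thurston geometry. Geometrization only provides a geometric \emph{decomposition}, which may be nontrivial (a mix of hyperbolic and Seifert-fibered pieces glued along tori), in which case there is no single model space whose contractibility you can quote. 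The paper closes exactly this case by passing to a finite Haken cover (via Agol's Virtually Haken theorem) and invoking Waldhausen's theorem that Haken manifolds have universal cover $\R^3$; some substitute for that step is needed.

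Second, your treatment of non-free actions is not reduced to anything you can actually apply. If the action has fixed points, $M/\Gamma$ is a 3-orbifold, and the prime decomposition, the Bass--Serre tree of the Kneser--Milnor splitting, and manifold geometrization are not available for it as stated; ``geometrization of the orbifold'' is a genuinely different theorem. The paper handles this by a uniform reduction: it applies the Orbifold Theorem of Boileau--Leeb--Porti (\Tr{orbi thm}) to conclude that $M/\Gamma$ is finitely covered by a closed manifold $M'$, whose fundamental group then acts freely, \pd ly and co-compactly on $M$, so that the entire argument may be run in the free case. You need this (or an equivalent) reduction before any of your $\pi_2\neq 0$ machinery applies to a non-free action. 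Finally, you correctly identify the tameness of the limit Cantor set as the delicate point in the infinitely-ended case, but you do not supply the argument; the paper's route is to observe that the universal cover decomposes into thrice-punctured 3-spheres glued along the tree, from which the identification with the \CtS\ follows, and your ``uniform shrinking'' heuristic would need to be turned into such a concrete decomposition.
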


The special case of \Tr{Agol} when the action is smooth and free became well-known to experts in the aftermath of the Thurston--Perelman \GeThm\ \cite{ThuThr,PerEnt,PerRic}. A proof for the general case, which is due to others, can be found in the Appendix.

\subsection{Finite group actions on $\BS^2$ and $\BS^3$}

The following is a classical theorem, essentially going back to a 1896 paper of Maschke \cite{Maschke}; for modern references see e.g.\ \cite[Theorem 1.16.]{BaAut} and \cite{Kleinian}.

\begin{theorem}[Folklore] \label{Maschke} 
A finite group admits a faithful action (by homeomorphisms or isometries) on $\BS^2$ \iff\ it has a planar \Cg.
\end{theorem}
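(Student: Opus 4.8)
The plan is to prove both implications through the classification of finite isometry groups of $\BS^2$, i.e.\ the finite subgroups of $O(3)$, using two classical realization results to bridge the topological and combinatorial sides. First I would dispose of the parenthetical equivalence between homeomorphisms and isometries: by a classical theorem of Ker\'ekj\'art\'o (made precise by Eilenberg and others), every finite group of self-homeomorphisms of $\BS^2$ is topologically conjugate to a group of isometries. Hence a finite group admits a faithful topological action on $\BS^2$ \iff\ it admits a faithful \emph{isometric} one, and it suffices to relate isometric actions to planar Cayley graphs.

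For the forward direction, suppose $\Gam$ acts faithfully on $\BS^2$ by isometries, so that $\Gam$ is a finite subgroup of $O(3)$. The fixed-point set of each nontrivial isometry of $\BS^2$ is nowhere dense (a pair of antipodal points, a great circle, or empty), so their union is a proper nowhere-dense subset and I may pick a base point $x_0$ with trivial stabiliser. Its orbit $X=\Gam x_0$ then consists of $|\Gam|$ points on which $\Gam$ acts regularly. Taking the Dirichlet--Voronoi tessellation of $\BS^2$ associated with $X$ yields a $\Gam$-invariant partition of $\BS^2$ into spherical cells, one per point of $X$. Its dual graph $G$ --- one vertex per cell, one edge per pair of cells sharing a boundary arc --- is naturally embedded in $\BS^2$ and preserved by $\Gam$. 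Since $\Gam$ acts regularly on $V(G)=X$, Sabidussi's theorem identifies $G$ as a \Cg\ of $\Gam$, which is planar by construction.

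For the converse, embed a planar \Cg\ $G=G(\Gam,S)$ in $\BS^2$. The left-regular action of $\Gam$ on itself extends to a faithful action of $\Gam$ on $G$ by automorphisms preserving the colouring of the edges by generators, so $\Gam$ embeds in $\mathrm{Aut}(G)$. It then remains to realise this combinatorial action by homeomorphisms of $\BS^2$. When $G$ is $3$-connected this is immediate: by Whitney's theorem its embedding in $\BS^2$ is unique up to homeomorphism, and by Mani's theorem $G$ is the graph of a convex polytope whose isometry group induces all of $\mathrm{Aut}(G)$; restricting to $\Gam$ delivers the desired faithful isometric action.

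The main obstacle is the general, non-$3$-connected case, where a given embedding of $G$ need not be preserved by the automorphisms and no symmetric realisation is handed to us for free. I would handle this by reducing to the $3$-connected situation: either by replacing $S$ with a generating set whose Cayley graph is $3$-connected, or by passing to the Tutte/SPQR decomposition of $G$ into its $3$-connected components, on whose tree $\Gam$ acts, and assembling a $\Gam$-equivariant embedding one component at a time. Ensuring that the homeomorphisms built on the separate pieces agree along the separating pairs and glue into a genuine group action is where the real work lies; conceptually, this reduction confirms that the groups arising on both sides are exactly the cyclic, dihedral, and polyhedral subgroups of $O(3)$, which is the common content of the two implications.
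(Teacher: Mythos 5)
First, a point of reference: the paper does not actually prove Theorem~\ref{Maschke}; it is stated as folklore with citations, and the surrounding discussion indicates that the classical route is via Maschke's explicit classification of the finite groups with planar Cayley graphs (cyclic, dihedral, and a short list of polyhedral-type groups), all of which are visibly subgroups of $O(3)$. So your proposal is necessarily a different route, and it should be judged on its own merits. Your forward direction is correct and rather clean: reducing homeomorphisms to isometries via Ker\'ekj\'art\'o--Eilenberg is the right 2-dimensional analogue of the Pardon/Dinkelbach--Leeb smoothing used elsewhere in the paper, the union of fixed-point sets is indeed nowhere dense so a free orbit exists, and the Delaunay dual of the resulting Voronoi tessellation is a connected planar graph on which $\Gamma$ acts regularly, hence a planar Cayley graph by Sabidussi. (Minor degenerate cases --- $|\Gamma|\le 2$, or an orbit lying on a single circle --- should be checked but cause no trouble.)

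The converse, however, has a genuine gap. The $3$-connected case via Whitney and Mani is fine, but your first proposed reduction --- replacing $S$ by a generating set whose Cayley graph is $3$-connected --- is false: $\Z_5$ has $C_5$ as its only planar Cayley graph (every two-generator choice yields $K_5$), and similarly the only planar Cayley graph of $\Z_7$ is $C_7$, since $C_7(1,2)\cong C_7(1,3)$ is non-planar. So cyclic groups of prime order at least $5$ admit no $3$-connected planar Cayley graph at all, and the reduction cannot even get started on the simplest examples. Your second proposed route (equivariant assembly of embeddings along the Tutte/SPQR decomposition, on whose tree $\Gamma$ acts) is the one that can be made to work --- it is essentially the ``invariant planar rotation system'' machinery that the paper's 2-dimensional references develop --- but you explicitly defer it as ``where the real work lies'', and that deferred step is the entire content of the converse outside the $3$-connected case. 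For a self-contained proof of the finite statement, the shortest honest path is the one the paper alludes to: prove Maschke's classification of finite groups with planar Cayley graphs and observe that each group on the list acts isometrically on $\BS^2$; as written, your converse establishes the result only for groups possessing a $3$-connected planar Cayley graph.
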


The finite case of our result is a 3-dimensional version of \Tr{Maschke}, replacing $\BS^2$ by $\BS^3$. We will prove that a finite group $\Gamma$ admits a faithful ---topological, smooth, or isometric--- action  on $\BS^3$, if and only if it has a Cayley complex $X$ that embeds topologically in $\BS^3$ so that the canonical action  $\Gamma \act X$ maps each chamber boundary to a chamber boundary. Before clarifying the details, we recall that this class of groups is now well-understood: 

\begin{theorem} \label{O4}
A finite group admits a faithful action by homeomorphisms/smooth maps/isometries on $\BS^3$ \iff\ it is isomorphic to a subgroup of the orthogonal group $O(4)$.
\end{theorem}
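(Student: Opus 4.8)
The plan is to treat the theorem as an equivalence of four conditions: a faithful action on $\BS^3$ by (a) homeomorphisms, (b) diffeomorphisms, (c) isometries, and (d) isomorphism of $\Gamma$ to a subgroup of $O(4)$. The implications $(d)\Rightarrow(c)\Rightarrow(b)\Rightarrow(a)$ are elementary. Indeed, identifying $\Gamma$ with a subgroup $G\le O(4)$, the standard linear action of $O(4)$ on $\R^4$ preserves the Euclidean norm and therefore restricts to a faithful action of $G$ on the unit sphere $\BS^3\subset\R^4$ by isometries of the round metric; isometries are in particular smooth maps, and smooth maps are homeomorphisms, so $\Gamma$ acts faithfully in each of the three geometric senses. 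This closes three sides of the square, and it remains only to establish $(a)\Rightarrow(d)$.

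For the converse I would argue in two stages. First I would reduce the topological hypothesis to a smooth one: a faithful action of a finite group by homeomorphisms of $\BS^3$ is, up to conjugacy, a smooth action. This rests on the coincidence of the topological, piecewise linear, and smooth categories for $3$-manifolds (Bing and Moise), together with the equivariant smoothing of finite group actions provided by Pardon's theorem, recalled in the present paper as \Tr{pardon new}. Since conjugation and smoothing do not alter the abstract isomorphism type of $\Gamma$, it suffices to prove $(d)$ for smooth finite group actions.

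In the second stage I would linearize the smooth action via geometrization. The quotient $\BS^3/\Gamma$ is a closed $3$-orbifold whose orbifold universal cover is $\BS^3$ and whose orbifold fundamental group is $\Gamma$. By the \GeThm\ for $3$-orbifolds---the Elliptization Theorem of Perelman in the case of a free action, and the full orbifold version of the \GeThm\ when the action has fixed points---this orbifold carries a spherical geometric structure, i.e.\ it is isometric to $\BS^3/G$ for some finite $G\le O(4)$ acting by round isometries. Transporting this geometric structure upstairs shows that the original action $\Gamma\act\BS^3$ is conjugate to the orthogonal action of $G$, whence $\Gamma\cong G\le O(4)$, as required.

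The main obstacle is entirely concentrated in the linearization step. The assertion that every finite group action on $\BS^3$ is conjugate to an orthogonal one is exactly the content of (orbifold) geometrization and is far from elementary: for free actions it is the Spherical Space Form problem settled by Perelman's proof of elliptization, while for actions with nonempty singular locus one must invoke the orbifold theorem (Thurston, and Boileau--Leeb--Porti), ruling out exotic spherical structures and controlling the geometry along the singular set. The smoothing step of the first stage, though less deep, is also genuinely delicate in dimension three and is the reason Pardon's result is needed rather than the classical smoothing theory alone.
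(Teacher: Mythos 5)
Your overall architecture is the same as the paper's: the implications from $O(4)$ down to topological actions are elementary, and the converse is obtained by smoothing the action via Pardon and then linearizing the smooth action via geometrization. The paper compresses the linearization step into a single citation: Dinkelbach and Leeb proved (by equivariant Ricci flow, using the \GeThm) that every finite \emph{smooth} action on $\BS^3$ is conjugate to an isometric one, and isometries of the round $\BS^3$ extend to elements of $O(4)$. You instead re-derive this by passing to the quotient orbifold $\BS^3/\Gamma$ and invoking elliptization in the free case and the orbifold theorem of Boileau--Leeb--Porti in the non-free case. That route is viable, but it is less clean: the orbifold theorem as usually stated requires an orientable orbifold with non-empty singular locus, so you would still have to handle orientation-reversing elements of $\Gamma$ by passing to the orientation-preserving subgroup and arguing that the extension by an index-two subgroup remains linear --- exactly the kind of case analysis the Dinkelbach--Leeb reference performs for you uniformly.

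There is one genuine error in your first stage. The claim that ``a faithful action of a finite group by homeomorphisms of $\BS^3$ is, up to conjugacy, a smooth action'' is false: Bing's involution of $\BS^3$ fixing a wildly embedded $2$-sphere is a finite topological action that is not conjugate to any smooth action, and Bing--Moise only smooths the \emph{manifold}, not the action. What \Tr{pardon new} actually provides is that the topological action is a \emph{uniform limit} of smooth actions; this is strictly weaker than conjugacy. Fortunately the weaker statement suffices for \Tr{O4}, because only the isomorphism type of $\Gamma$ is at stake: if $g\neq e$ moves some point $x$ by a definite distance, then any smooth action of $\Gamma$ uniformly close enough to the given one still moves $x$, so the approximating smooth actions are eventually faithful and one may run the linearization argument on one of those. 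You should rephrase the smoothing step in this form; as written, the intermediate assertion is stronger than what is true.
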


Indeed, using the \GeThm, Dinkelbach and Leeb \cite{DL09} showed that every finite smooth action on $\BS^3$ is conjugate to an isometric action. Pardon \cite{Pardon21} complemented this by proving that every topological action on $\mathbb{S}^3$ is the uniform limit of smooth actions. Since any isometry of $\BS^3$ extends to an element of $O(4)$, these two facts combined establish that the finite groups admitting faithful, topological actions on $\BS^3$ coincide, up to isomorphism, with the finite subgroups of $O(4)$, as anticipated by Zimmermann \cite{Zimmermann18}. This forms a quite rich family of groups, described explicitly in \cite{ConwaySmith}. \Tr{O4} is the culmination of a long effort,   the history of which is surveyed e.g.\ in \cite{DM85,edmonds83,Hambleton14,Zimmermann11}. 
In contrast, the groups of \Tr{Maschke} were described by Maschke in 1986: they are just the finite cyclic and dihedral groups, and 8 sporadic ones \cite[Figures 1--10]{Maschke}, \cite{GrossTucker}.

For an analogue of \Tr{Maschke} for actions on $\BS^3$, \Cg s are unlikely to suffice: every finite graph embeds in $\BS^3$. As we will see, the key is to consider \Cc es instead. Define a \defi{\gcc} of a group $\Gamma$ to be a simply connected 2-dimensional cell-complex $X$, \st\ \ti\ an action of $\Gamma$ on $X$ that is regular on the 0-skeleton $X^0$. This generalises the standard notion of  \Cc, in that we allow the action to fix 1-cells and 2-cells.

Any embedding $\phi$ of a 2-dimensional cell-complex $X$ into $\BS^3$ ---more generally, into an orientable 3-manifold--- induces a cyclic ordering $\sigma_e$ of the 2-cells containing a given 1-cell $e$ of $X$, e.g.\ by considering the clockwise cyclic order in which these 2-cells intersect a disc locally perpendicular to $e$. This family of cyclic orderings $\sigma(\phi):= \{\sigma_e\}_{e\in X^1}$ will be called a \prs, defined more carefully in \Sr{sec rot sys}. Importantly, $\sigma(\phi)$ contains all the information needed to determine which sets of 2-cells of $X$ bound a chamber of $\phi(X)$; these sets will be called \defi{pre-chambers}. Given a cellular group action $\Gamma \act X$, e.g.\ when $X$ is a \Cc\ of   $\Gamma$, we say that $\sigma(\phi)$ is \defi{$\Gamma$-invariant} if $\Gamma \act X$ preserves  $\sigma(\phi)$ (see \Sr{invariant prs} for details). 

%In Section 2, we provide a proof of \ref{O4} by using the results of Pardon to smoothen our action, and those of Perelman to make it isometric. 

\medskip
Our 3-dimensional analogue of \Tr{Maschke} is the equivalence of items \ref{m i} and \ref{m iii} of the following more general statement:

\begin{theorem} \label{main thm}
For a finite group $\Gamma$ \tfae: 
\begin{enumerate}
\item \label{m i} $\Gamma$ is one of the groups of \Tr{O4};
\item \label{m ii} $\Gamma$ admits a \gcc\ $X$ with a $\Gamma$-invariant \prs; 
\item \label{m iii} $\Gamma$ admits a \gcc\ $X$ with an embedding $\phi: X \to \BS^3$ with $\Gamma$-invariant \prs; 
\item \label{m iv} $\Gamma$ admits a \gcc\ $X$ with an  embedding $\phi: X \to \BS^3$ \st\ $\phi(X)$ is invariant %\footnote{We say that $\phi$ is \defi{equivariant} \wrt\ an action  $\Gamma \act \BS^3$ if the action fixes $\phi(X)$ as a set; we will also say that $\phi(X)$ is \defi{invariant} in this case.}
under some faithful topological action of $\Gamma$ on $\BS^3$ which acts regularly on the vertices of $X$.\footnote{In other words, \ref{m iv} states that $\phi$ is equivariant \wrt\  actions of $\Gamma$ on $X$ and $\BS^3$ with the desired   properties.} 
\end{enumerate}
\end{theorem}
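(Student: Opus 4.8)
The plan is to prove the cycle of implications \ref{m i} $\Rightarrow$ \ref{m iv} $\Rightarrow$ \ref{m iii} $\Rightarrow$ \ref{m ii} $\Rightarrow$ \ref{m i}. Two of these arrows are formal. The implication \ref{m iii} $\Rightarrow$ \ref{m ii} only forgets the embedding. For \ref{m iv} $\Rightarrow$ \ref{m iii}, the homeomorphisms realising the action of \ref{m iv} preserve $\phi(X)$ setwise, hence carry the germ of $\phi(X)$ around each $1$-cell to that around its image; reading off the induced cyclic orders shows that $\sigma(\phi)$ is $\Gamma$-invariant. The only subtlety is that $\Gamma$ may contain orientation-reversing homeomorphisms, which reverse every $\sigma_e$ at once; the notion of $\Gamma$-invariance set up in \Sr{invariant prs} is designed to absorb exactly this ambiguity.

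For \ref{m i} $\Rightarrow$ \ref{m iv} I would exploit that a subgroup $\Gamma \le O(4)$ acts isometrically on the round $\BS^3$. Choose a base point $x_0$ with trivial stabiliser---possible since the nontrivial elements of $O(4)$ fix only lower-dimensional subspheres of $\BS^3$, whose union is a proper closed set---so that $\Gamma x_0$ is a free, hence regular, orbit. The Dirichlet domain centred at $x_0$ tiles $\BS^3$ under $\Gamma$, and its face-pairing transformations furnish a presentation of $\Gamma$ (Poincar\'e's polyhedron theorem). Taking the $2$-skeleton of the cell complex dual to this tiling yields a \Cc\ $X$ of $\Gamma$ (in particular a \gcc): its $0$-cells are the chamber centres $\Gamma x_0$, so $\Gamma$ is regular on $X^0$; it is \sico\ by virtue of being a \Cc; and it is embedded in $\BS^3$ and preserved by the isometric $\Gamma$-action. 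This is precisely \ref{m iv}, the induced rotation system recording orientations only up to the ambiguity already noted.

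For \ref{m ii} $\Rightarrow$ \ref{m i}, which is the substantive direction, I would use the \prs\ $\sigma$ to reconstruct the pre-chambers of $X$ combinatorially---the $3$-dimensional analogue of tracing faces from a graph rotation system (cf.\ \Sr{sec rot sys})---and then build a space $M$ by gluing a $3$-ball to $X$ along the boundary of each pre-chamber. The planarity condition on $\sigma$ is exactly what is needed to ensure that each pre-chamber boundary is a $2$-sphere and that $M$ is a closed $3$-manifold. Since $M$ is obtained from $X$ by attaching $3$-cells, which does not change the fundamental group, and $X$ is \sico, $M$ is a \sico\ closed $3$-manifold; by the Poincar\'e Conjecture (a consequence of the \GeThm) $M \isom \BS^3$. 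Because the ball-filling is canonical in $\sigma$ and $\Gamma$ preserves $\sigma$, the action $\Gamma \act X$ permutes the pre-chambers and so extends to a faithful topological action $\Gamma \act M$ that is regular on $X^0$; since $M \isom \BS^3$ this is a faithful topological action of a finite group on $\BS^3$, hence smoothable by Pardon and then conjugate to an isometric---thus $O(4)$---action by Dinkelbach--Leeb, giving \ref{m i} (and, incidentally, re-establishing \ref{m iv} with an embedding in $\BS^3$).

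The main obstacle is this realisation step \ref{m ii} $\Rightarrow$ \ref{m i}, and it has two delicate parts. The first is to verify that the purely combinatorial planarity condition on $\sigma$ really forces $M$ to be a manifold, i.e.\ that the local data around $1$-cells and around vertices assemble into circles and $2$-spheres rather than into more complicated links; this is the genuine $3$-dimensional counterpart of the classical equivalence, for graphs, between carrying a planar rotation system and embedding in $\BS^2$, and it is the point most sensitive to the precise definitions of \Sr{sec rot sys}. The second is to make the extension of the $\Gamma$-action to a topological action on $\BS^3$ honest: one must perform the ball-filling $\Gamma$-equivariantly and obtain the action on $M$ \emph{before} invoking the identification $M \isom \BS^3$, so that equivariance is not lost. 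Throughout, the recurring bookkeeping difficulty is orientation, since orientation-reversing group elements reverse all rotations simultaneously and must be handled uniformly in every implication.
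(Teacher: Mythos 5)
Your cycle of implications is sound, and your hard direction \ref{m ii} $\Rightarrow$ \ref{m i} is essentially the paper's argument: build a closed $3$-manifold $M$ from $X$ and $\sigma$ by filling in the pre-chambers, observe that $M$ inherits simple connectedness from $X$, invoke the Poincar\'e conjecture to get $M\isom\BS^3$ with an induced faithful topological $\Gamma$-action, and then pass through Pardon and Dinkelbach--Leeb to land in $O(4)$. (The paper packages this as the implication \ref{m ii} $\to$ \ref{m iv} via Carmesin's Theorem~\ref{JCtheorem} and Corollary~\ref{cor inf Carm}, rather than closing the loop through \ref{m i}, but the content is the same.) One inaccuracy to correct: it is \emph{not} true that planarity of $\sigma$ forces each pre-chamber boundary to be a $2$-sphere, and one cannot simply glue in $3$-balls. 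In Carmesin's construction one attaches a \emph{solid surface} (handlebody) along each pre-chamber, via an attachment map that need not be injective; what planarity of $\sigma$ buys is that the links work out so that the result is a manifold (Carmesin's Lemma~4.5), not that the pre-chambers are spherical. You correctly flag this as the delicate point, but the fix is to cite or reprove that lemma, not to argue that pre-chambers are spheres.

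Where you genuinely diverge from the paper is \ref{m i} $\Rightarrow$ \ref{m iv}. You take a free orbit $\Gamma x_0$ of the isometric $O(4)$-action, form the Dirichlet tiling, and use the $2$-skeleton of the dual complex; regularity on vertices is then automatic because the $0$-cells are the free orbit itself. The paper instead takes an adapted equivariant triangulation of the quotient orbifold $M/\Gamma$, lifts it, merges a fundamental domain of $3$-cells into a single chamber (Lemma~\ref{action to emb com}), and only then achieves freeness and regularity on vertices via the blow-up construction $F(X)$ of Theorem~\ref{free the action} together with Babai's Contraction Lemma. Your route is shorter and quite clean for finite isometric actions on the round sphere, and it is a legitimate alternative for Theorem~\ref{main thm}; its cost is that it leans on a canonical invariant metric and on the Dirichlet tiling being a genuine regular cell decomposition with a well-defined dual (standard but worth a sentence), and it does not generalise to the setting of Theorems~\ref{cor i to ii} and \ref{inf thm}, where the action is merely a properly discontinuous topological action on an arbitrary $3$-manifold and the paper's heavier machinery is doing real work. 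Your observation that simple connectedness of the dual $2$-skeleton holds ``by virtue of being a Cayley complex'' is circular as stated; argue it directly from $\pi_1(\BS^3)=1$ and the fact that deleting the top-dimensional cells of a CW structure does not change $\pi_1$.
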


As an example, let $\Gamma$ be the cartesian product of two finite cyclic groups $C_k, C_\ell$, and consider its  generalised \Cc\ $X$ \wrt\ the standard presentation $\left< a,b \mid a^k, b^\ell, [ab] \right>$. Thus $X$ is a quadrangulated torus $T$, united with 2-cells bounding its essential cycles spanned by each one of the generators $a,g$. It is easy to see a  topological embedding of $X$ in $\BS^3$, with all $a$-coloured 2-cells inside $T$ and all $b$-coloured 2-cells outside it. Notice that the \prs\ induced by this embedding is $\Gamma$-invariant, as required by item \ref{m iii}. The reader will be able to see a topological action of $\Gamma$ that preserves this embedding, as postulated by item \ref{m iv}, which trivially implies \ref{m i}. The implication  \ref{m ii} $\to$ \ref{m iii} says that we could have specified the \prs\ abstractly, as a combinatorial set of cyclic orderings, without mention to a particular embedding. This implication makes use of the validity of the Poincar\'e conjecture via a result of Carmesin \cite{CarEmbII}  (\Tr{JCtheorem} below). In \Sr{sec ii to iv} we observe that Carmesin's result fails in  general for infinite 2-complexes, but remains true for \Cc es, a fact that relies on the \GeThm\ via \Tr{Agol}.

%\mymargin{ \tiny STILL UNDER INVESTIGATION:  
%In \ref{m iv}, what can we say about the action? Can we make it smooth? 

%Can we make $X$ regular? Can we show it is in general impossible to obtain a simple $X$?}

\subsection{Infinite discrete actions on open 3-manifolds}

\Tr{main thm} is the specialisation of our main theorem to finite groups. We call the four 3-manifolds featuring in \Tr{Agol} the \defi{special} 3-manifolds. They also feature in our main theorem:

\begin{theorem} \label{inf thm}
For a finitely generated group $\Gamma$ \tfae: 
\begin{enumerate}
\item \label{inf i} $\Gamma$ admits a faithful, properly discontinuous, co-compact, topological action on a simply connected 3-manifold; %\mymargin{\small in other words, $\Gamma$ is the fundamental group of a good, compact, 3-orbifold.}
\item \label{inf ii} $\Gamma$ admits a \gcc\ $X$ with a $\Gamma$-invariant \prs\ with finite pre-chambers; 
\item \label{inf iii} $\Gamma$ admits a \gcc\ $X$ with an %(accumulation-free) 
embedding $\phi: X \to \BS^3$ \st\ $\sigma(\phi)$ is $\Gamma$-invariant and has finite pre-chambers; 
\item \label{inf iv} $\Gamma$ admits a \gcc\ $X$ with an  embedding $\phi$ into a special 3-manifold $M$ %with precompact chambers, %\footnote{The precompact chambers condition is optional here;  the implication \ref{m iv} $\to$ \ref{m i} holds trivially without it, but the implication \ref{m i} $\to$ \ref{m iv} provides it.}
 \st\  $\phi(X)$ is invariant under some faithful, properly discontinuous, co-compact, topological action of $\Gamma$ on $M$, which acts regularly on the vertices of $X$. Moreover,  $\phi(X)$ has finite pre-chambers. 

\end{enumerate}
\end{theorem}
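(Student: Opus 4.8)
The plan is to establish the equivalences through the routine implications \ref{inf iv} $\to$ \ref{inf i}, \ref{inf iv} $\to$ \ref{inf iii} $\to$ \ref{inf ii} together with the two substantial constructions \ref{inf i} $\to$ \ref{inf iv} and \ref{inf ii} $\to$ \ref{inf iv}; these five arrows make the implication graph strongly connected, so all four statements become equivalent. The two tools driving the hard directions are \Tr{Agol}, which pins down the ambient manifold as one of the four special ones, and Carmesin's embedding theorem (\Tr{JCtheorem}), which mediates between an abstract rotation system and an honest embedding.

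First I would dispatch the easy implications. For \ref{inf iv} $\to$ \ref{inf i} there is nothing to do, since each special 3-manifold is a simply connected 3-manifold and the action supplied by \ref{inf iv} already has the four required properties. For \ref{inf iv} $\to$ \ref{inf iii} I would use that every special 3-manifold embeds in $\BS^3$ (here $\R^3$ is a chart, $\BS^2\times\R\cong\BS^3\setminus\{p,q\}$, and the \CtS\ is by definition $\BS^3$ minus a Cantor set); post-composing $\phi$ with such an inclusion yields an embedding $X\to\BS^3$ with the same rotation system and the same pre-chambers, as both are local data determined by $\sigma$. Finally \ref{inf iii} $\to$ \ref{inf ii} is definitional, the embedding into $\BS^3$ producing the required $\Gamma$-invariant planar rotation system $\sigma(\phi)$ with finite pre-chambers.

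For \ref{inf i} $\to$ \ref{inf iv} I would first invoke \Tr{Agol} to assume the action is on a special 3-manifold $M$. Proper discontinuity and co-compactness on the simply connected $M$ render $\Gamma$ finitely presented, so there are finitely many orbits of generators and of relators. Choosing a basepoint $x_0\in M$ with trivial stabiliser --- a generic point, the fixed-point set of each non-trivial element being nowhere dense --- I would put $X^0:=\Gamma x_0$, join $x_0$ to its generator-translates by embedded arcs spread equivariantly over the orbit to form a Cayley graph, and then cap each relator loop by an embedded disc (possible since $M$ is simply connected), again spread over orbits. Standard three-dimensional cut-and-paste lets me make these discs pairwise disjoint off their shared edges, producing a \gcc\ $X$ embedded $\Gamma$-equivariantly in $M$; the result is only a \emph{generalised} \Cc\ in general, because elements fixing cells are allowed. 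Co-compactness confines the complementary regions to finitely many orbit-types, each bounded by finitely many 2-cells, so the pre-chambers are finite, giving \ref{inf iv}.

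The crux is \ref{inf ii} $\to$ \ref{inf iv}, which must build geometry out of the purely combinatorial datum of a $\Gamma$-invariant \prs\ with finite pre-chambers. The plan is to thicken $X$ into a 3-manifold by filling each pre-chamber with a 3-ball and gluing the balls along the 2-cells as dictated by $\sigma$; that each finite pre-chamber is a 2-sphere bounding a ball rests on the finite case of \Tr{JCtheorem} together with the Poincar\'e conjecture. Planarity of the rotation system guarantees that the resulting space $N\supseteq X$ is locally Euclidean along each edge (the cyclic order $\sigma_e$ closes up) and at each vertex (its link is a 2-sphere), so $N$ is a 3-manifold, and attaching the balls leaves the trivial fundamental group of $X$ unchanged, so $N$ is simply connected. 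Since $\sigma$ is $\Gamma$-invariant the gluing is equivariant, whence $\Gamma$ acts on $N$ faithfully, properly discontinuously and, by finiteness of the pre-chambers, co-compactly. A final application of \Tr{Agol} identifies $N$ with a special 3-manifold, and transporting $X$ and the action across the homeomorphism yields \ref{inf iv}. I expect this construction to be the main obstacle: Carmesin's theorem genuinely fails for arbitrary infinite 2-complexes, so the argument must exploit the homogeneity of a \Cc\ and the finiteness of the pre-chambers to certify that every vertex link is a sphere and that $N$ is a simply connected manifold --- precisely where the full \GeThm, channelled through \Tr{Agol}, is indispensable.
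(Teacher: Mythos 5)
Your implication skeleton (the two easy arrows out of \ref{inf iv}, plus \ref{inf i}~$\to$~\ref{inf iv} and \ref{inf ii}~$\to$~\ref{inf iv}) is exactly the paper's, and your treatment of \ref{inf ii}~$\to$~\ref{inf iv} is essentially the paper's route: build Carmesin's thickening of $X$ along the pre-chambers, check it is a simply connected 3-manifold carrying a properly discontinuous co-compact $\Gamma$-action, and invoke \Tr{Agol} in place of the Poincar\'e conjecture to identify it with a special 3-manifold. One inaccuracy there: you fill each pre-chamber with a 3-ball on the grounds that ``each finite pre-chamber is a 2-sphere bounding a ball'' by the finite case of \Tr{JCtheorem}. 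That is unjustified --- a pre-chamber with its incident cells need not be simply connected, so \Tr{JCtheorem} does not apply to it, and pre-chambers of a planar rotation system can be closed surfaces of positive genus. The construction $T(C,\sigma)$ attaches a \emph{solid surface} (handlebody) to each pre-chamber, with a possibly non-injective attachment map; with that correction your argument is the paper's.

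The genuine gap is in \ref{inf i}~$\to$~\ref{inf iv}. Your plan --- pick a generic orbit as vertex set, join by equivariant arcs, cap relator loops by equivariantly chosen embedded discs --- glosses over serious equivariance and general-position issues (relator loops need not be embedded circles, and a disc bounded by a loop whose stabiliser is non-trivial must be chosen invariantly), but the step that actually fails is the last one: ``co-compactness confines the complementary regions to finitely many orbit-types, each bounded by finitely many 2-cells.'' Co-compactness of $\Gamma\act M$ does not make the chambers of $\phi(X)$ precompact or their boundaries finite; a single chamber of a thin equivariant 2-complex can be unbounded with infinite boundary, so the finite pre-chamber condition of \ref{inf iv} is simply not delivered by your construction. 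The paper avoids this by not embedding a presentation complex directly: it first smooths the action (\Tr{pardon new}), takes an adapted triangulation of the quotient orbifold $M/\Gamma$ and lifts it to a $\Gamma$-invariant triangulation of $M$, whose 2-skeleton has all chambers equal to 3-simplices, hence finitary; it then merges a fundamental domain of 3-cells to make the action regular on chambers, and performs a further local ``fattening'' (blowing up cells into spheres, so that all new vertices lie inside chambers) followed by Babai's contraction lemma to convert regularity on chambers into regularity on vertices. Some replacement for this space-filling step is needed before your argument for \ref{inf i}~$\to$~\ref{inf iv} can close.
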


An analogous 2-dimensional statement, generalising 
\Tr{Maschke}, can be found in \cite[Theorem~1.1]{Kleinian}. It says that the infinite groups acting discretely on a planar surface are exactly the Kleinian function groups. They also coincide with those groups admitting a Cayley graph with invariant \defi{\prs}. Details can be found in  \cite{BowMas,Kleinian,LevMasSpe}. 

We saw examples of discrete actions on $\BS^3$ above. Groups acting discretely on $\R^3$ include lattices in the Thurston geometries homeomorphic to $\R^3$, such as euclidean and hyperbolic crystalographic groups, and the discrete Heisenberg group. On the contrary, $\Z^4$ cannot act discretely on $\R^3$, by a theorem of Stallings saying that it is not isomorphic to any subgroup of a 3-manifold group \cite{BoiARO}. Groups acting discretely on $\mathbb{S}^2 \times \R$ include $\Z$, and its cartesian product with any of the groups of \Tr{Maschke}. An example of a group acting discretely on the \CtS\ is the free group $F_r$ of rank $r \geq 2$. The universal cover of any closed 3-manifold with fundamental group $F_r$ provides an example, and such manifolds can be easily obtained using  connect sums of copies of $\BS^2 \times \BS^1$. 

%start with an infinite tree $T$ realised as a \Cg\ of $F_r$, and embed $T$ in  $\R^3$ without accumulation points of vertices. Take a small neighbourhood around each point of $T$, to form a 3-manifold $M\subset \R^3$ that can be deformation retracted onto $T$. 

\medskip
We now discuss the tightness of the various conditions of  \Tr{inf thm}. Item \ref{m iv} provides the most detailed information about these groups, and it trivially implies the other three (\Sr{conc}). The conditions in the other three items are necessary in the sense that dropping any one of them would violate at least one of the equivalences. To see that the condition of finite pre-chambers is necessary for the implications \ref{m ii},\ref{m iii} $\to$ \ref{m iv},\ref{m i}, consider the group $\Z^2$. Its standard \Cc\ embeds in $\R^3$ with invariant  \prs, but with two infinite pre-chambers, and it is well-known that $\Z^2$ is not one of the groups of \Tr{inf thm} \cite{MOfreeabelian}. We do not have an explicit example of a group with a \Cc\ that embeds in $\BS^3$ with finite pre-chambers but only with non-invariant \prs, but expect that such a group can be found using ideas of \cite[\S 10]{Kleinian}.

Dropping some of our conditions in pairs can lead to interesting statements, some of which are implicit in our proofs. A notable example is that the implication \ref{m i} $\to$ \ref{m iii} generalises to group actions on non-\sico\ 3-manifolds, resulting to embedded 2-complexes with an appropriate action by the same group; see \Tr{cor i to ii}. Dropping the condition of finiteness of pre-chambers leads to an interesting class of groups; see \Sr{conc} for related open problems.

In items \ref{inf i} and  \ref{inf iv} of \Tr{inf thm}, we can assume that the action is in addition smooth. \mymargin{If $G$ is hyperbolic and 1-ended, we can assume that the action is by isometries on $\mathbb{H}^3$.}

%Whether a given group presentation satisfies the conditions of item \ref{inf ii} can be checked algorithmically. This yields an effective enumeration of the groups in question, which are the fundamental groups of good, compact, 3-orbifolds, and some of their subfamilies.

\medskip
This paper is structured as follows. \Sr{prels} provides all the necessary definitions. The implication \ref{m ii} $\to$ \ref{m iv} of \Tr{inf thm} (and \Tr{main thm}) is proved in \Sr{sec ii to iv}. 
Since the implication \ref{m iii} $\to$ \ref{m ii} is trivial, this also establishes the implication \ref{m iii} $\to$ \ref{m iv}. 
The implication \ref{m i} $\to$ \ref{m iv} is proved in \Sr{sec i to iv}, while  \ref{m iv} trivially implies the other three statements. \Sr{conc} puts the pieces together to conclude the proof of \Tr{inf thm}. 

In \Sr{sec iii to iv} we provide a separate proof of the  implication \ref{m iii} $\to$ \ref{m iv} of \Tr{main thm} avoiding the Poincar\'e conjecture. Thus we are able to prove the equivalence of items \ref{m i}, \ref{m iii} and \ref{m iv} of \Tr{main thm} without relying on Perelman's work. This is not the case for \Tr{inf thm}, where we do not see a way to avoid using the \GeThm.

\comment{\small

%\bigskip 
In the 50's and 60's, the classic papers of Bing(\cite{bing1952}, \cite{bing1964}),  Montgomery-Zippin(\cite{MZ54}), and Alford(\cite{Alford66}) hosted the first examples of non-smoothable finite topological actions over $\mathbb{S}^3$. Despite this hurdle, research on the properties of finite groups that admit a faithful topological action over $\mathbb{S}^3$ continued in a most fruitful manner. Moise(\cite{Moise52}) and Bing(\cite{bing1959}) proved that every homeomorphism between smooth manifolds is the uniform limit of a sequence of diffeomorphisms. Later, it was proven  that \textbf{free} finite topological actions on spheres are smoothable, see Madsen-Thomas-Wall(\cite{MTW76}). 

}

\section{Definitions and preliminary results} \label{prels}

\subsection{Group actions} \label{sec GA}

%Given a group $\Gamma$ and a generating set $S \subset \Gamma$, we define the (right) \defi{\Cg} $G= Cay(\Gamma,S)$ to be the graph with vertex set $V(G)= \Gamma$ and edge set $E(G)= \{ g(gs) \mid g\in \Gamma, s\in S\}$. We consider $G$ as a directed, labelled graph, with the edge $g(gs)$ being directed from $g$ to $gs$ and labelled by the generator $s$. Here, we are assuming that $S$ generates $\Gamma$, so that all \Cg s in this paper are connected. The group $\Gamma$ acts on $G$ by automorphisms, by multiplication on the left. %We assume here that $S$ generates $\Gamma$, which implies that \g is connected.

A 3-manifold is a topological space each point of which has a neighbourhood homeomorphic to an open subset of $\R^3$.

A (topological) \defi{action} $\Gamma\act M$ of a group $\Gamma$ on a topological space $M$ is a homomorphism from $\Gamma$ into the group of homeomorphisms of $M$.
Given such an action, the images of a  point $x\in M$ under $\Gamma\act M$ form the \defi{orbit} of $x$. %A \defi{fundamental domain} is a subset of $M$ which contains exactly one point from each of these orbits.

An action $\Gamma \act M$ is \defi{faithful}, if for every two distinct $g,h \in \Gamma $ there exists an $x \in M$ such that $gx \neq hx$; or equivalently, if for each $g \neq e \in \Gamma $ there exists an $x \in M$ such that $gx \neq x$. It is \defi{free} if $gx \neq hx$ \fe\ $g,h\in \Gamma$ and $x\in M$. It is \defi{transitive} if \fe\ $x,y\in M$ \ti\ $g\in \Gamma$ with $gx =y$ (we will only encounter transitive actions on discrete spaces $M$). Finally, $\Gamma \act M$ is \defi{regular} if it is free and transitive.

An action $\Gamma \act M$ is \defi{\pd}, if  \fe\ compact subspace $K$ of $M$, the set $\{g\in\Gamma \mid g K \cap K \neq \emptyset\}$ is finite. It is \defi{co-compact}, if the quotient space $M/\Gamma$ is compact. If $M$ is locally compact, then an equivalent condition is that there is a compact subset $K$ of $M$ such that $\bigcup \Gamma K =M$. 

%\subsection{Graphs and 2-complexes}

\subsection{Graphs} \label{graphs}

A (simple) \defi{graph} \g is a pair $(V,E)$ of sets, where $V$ is called the set of \defi{vertices}, and $E$ is a set of two-element subsets of $V$, called the set of \defi{edges}. We will write $uv$ instead of $\{u,v\}$ to denote an edge. A \defi{multi-graph} is defined similarly, except that $E$ is a multi-set, and it can have elements consisting of just one vertex. 

We let $V(G)$ denote the set of vertices of a graph $G$, and $E(G)$ denote the set of edges of $G$. 
 
%The \defi{degree} of a vertex $v\in V(G)$ in \g is the number of edges in $E(G)$ containing $v$.

Every (multi-)graph $\g=(V,E)$ gives rise to a $1$-complex, by letting $V$ be the set of 0-cells, and for each $uv\in E$ introducing an arc with its endpoints identified with $u$ and $v$. Thus we will sometimes interpret the word \defi{graph} as a 1-complex. In particular, when discussing \defi{embeddings} of graphs we will mean topological embeddings of 1-complexes.

%A \defi{subdivision} of $\g=(V,E)$ is obtained by replacing an edge $uv$ by a pair of edges $uw,vw$, where $w\not\in V$ is a new vertex. Note that subdivisions result in homeomorphic 1-complexes.

A \defi{generalised \Cg} of a group $\Gamma$ is a graph $G$ endowed with an action $\Gamma \act G$ by isomorphisms which action is regular on $V(G)$. This is analogous to our definition of a generalised \Cc\ in the introduction. Again the difference to the standard notion of a \Cg\ is that we allow the action to fix edges.

\subsection{2-complexes} \label{ccs}

A \emph{$2$-complex} is a topological space $X$ obtained as follows. We start with a $1$-complex $X^1$ as defined above, called the \defi{1-skeleton} of $X$. We then introduce a set $X^2$ of copies of the closed unit disc $\mathbb{D} \subseteq \R^2$,  called the \defi{2-cells} or \defi{faces} of $X$, and for each $f\in X^2$ we {attach} $f$ to $X^1$ via a map $\phi_f: \BS^1 \to X^1$, called the \defi{attachment map}, where we think of $\BS^1$ as the boundary of $\mathbb{D}$. Attaching here means that we consider the quotient space where each point $x$ of $\BS^1\subset f$ is identified with $\phi_f(x)$. We let $X^0:= V(X^1)$ be the set of \defi{vertices}, or 0-cells, of $X$.

We say that $X$ is \defi{regular}, if $\phi_f$ is a homeomorphism onto its image \fe\ $f\in X^2$. We say that  $X$ is \defi{edge-regular}, if each $\phi_f$ is injective on 1-cells, that is, $x\in X^0$ holds for every point $x\in X$ with more than one pre-image under $\phi_f$. 

For $f\in X^2$, we write $f =[x_1,\dots, x_k]$ if $x_1,\dots, x_k$ is the cyclic sequence of vertices appearing in the image of $\phi_f$. %We only use this notation for edge-regular $X$. 

To each 2-cell $f=[x_1,\dots, x_k] \in X^2$, we associate two distinct \defi{directed 2-cells} 
$f_1,f_2$, also denoted by $\langle x_1,\dots, x_k\rangle$ and $\langle x_k,\dots, x_1\rangle$ respectively. Their \defi{reverses} are defined as $f_1^{-1}:= f_2$ and $f_2^{-1}:= f_1$.

\smallskip
If $X$ is not regular then it is always possible to produce a regular complex $X'$ homeomorphic to $X$ using the \emph{barycentric subdivision} defined as follows. 
For each edge $e=uv\in X^1$, we subdivide $e$ by adding a new vertex $m$ at its midpoint. For each occurrence of $e$ in a 2-cell $f$ of $X$, we replace that occurence by the pair $um, mv$ or $vm, mu$ as appropriate. We then triangulate each 2-cell $h=x_1,\ldots,x_k$ of the resulting $2$-complex by adding a new vertex $c$ in its interior, adding the edges $c x_1,\ldots, cx_k$ to the 1-skeleton, and replacing $h$ by the 2-cells $[c, x_1, x_2], [c, x_2, x_3], \ldots, [c,x_k, x_1]$. 

Note that the barycentric subdivision $X'$ of $X$ is a simplicial complex, in particular a regular one, and its 1-skeleton is a simple graph. Here, a 2-complex $X$ is \defi{simplicial}, if its 1-skeleton is a simple graph and each $F\in X^2$ is of the form $f=[x_1, x_2, x_3]$ where $x_1,x_2,x_3$ are distinct vertices. %a complex where for all $0\leq j\leq i$ and $E\subseteq F$ with $F\in X^i$ and $|E|=j$ satisfies $E\in X^j$.

\smallskip
For each $v\in X^0$, the \emph{link graph} $L_X(v)$ is the graph on the neighbourhood of $v$ in $X^1$ with $uw \in E(L_X(v))$ if and only  if $u,v,w$ are consecutive in a 2-cell of $X$. Alternatively, we can define $L_X(v)$ so that its vertices are the edges incident with $v$, and two edges $vu,vw$ are joined by an edge of $L_X(v)$ whenever $u,v,w$ are consecutive in a 2-cell of $X$. These two definitions yield isomorphic graphs when $X^1$ is a simple graph, and it is a matter of convenience to use the one or the other. 
In general, link graphs are more naturally defined as multigraphs, as $u,v,w$  may be consecutive in more than one 2-cells of $X$.  %But we can avoid parallel edges by using barycentric subdivisions, and we will do so to ensure that all our link graphs are simple.
%If $X$ is not regular then link graphs are more naturally defined as multigraphs, but we will restrict their use to regular complexes, with simple 1-skeleton, in this paper.

%We say that $X$ is \emph{locally $k$-connected}, if each of its link graphs is $k$-connected. Here a graph (or multi-graph) is \defi{$k$-connected}, if it has more than $k$ vertices, and remains connected after removing any set of at most $k-1$ vertices.

%\smallskip
%A 2-complex $X$ is \defi{locally finite}, if each vertex of $X$ is contained in finitely many edges and 2-cells. {All 2-complexes in this paper are locally finite.}

%\subsection{Cayley graphs and Cayley complexes}

\comment{
Given a group $\Gamma$, let us name $S\subseteq \Gamma$ a \textit{generating set} of $\Gamma$ if every element of $\Gamma$ can be written as a product of elements of $S$ \textbf{and} their inverses. In what follows, whenever we refer to a generating set $S$, we will mean one that contains neither the identity, nor the inverse of any of its elements (except in the case of elements of order two). 

Let $\Gamma$ be a group. For any generating set $S$ of $\Gamma$, we define the (right) \textit{Cayley graph} \[C(\Gamma, S):=(\Gamma, \{(g,h)|g^{-1}h\in S\}).\] That is, the vertex set of $C(\Gamma, S)$ is the set of elements of $\Gamma$, and the edge set of $C(\Gamma, S)$ is the set of ordered pairs $(g,h)$ of elements of $\Gamma$ such that $h=gs$ for some generator $s\in S$. We commonly say that an edge $(g,gs)$ is \textit{of colour $s$}.

Similarly, we can define the (right) \textit{simplified Cayley graph} \[C'(\Gamma, S):=\\(\Gamma, \{[g,h]|g^{-1}h\in S\}).\] The difference in this case is that the edge set consists of unordered pairs of elements of $\Gamma$. Thus, whereas each generator $s$ of order 2 in $S$ contributes a pair of parallel edges between any two vertices $g,h$ of $C(\Gamma, S)$ with $h=gs$, each such pair of edges merges into a single edge in $C'(\Gamma, S)$. Hence, simplified Cayley graphs are always simple.

It is worth noting that in a Cayley graph $C(\Gamma, S)$, every vertex has degree equal to twice the cardinality of the selected generating set $S$. Likewise, in a simplified Cayley graph $C'(\Gamma, S)$, every vertex has degree between $|S|$ and $2|S|$. Thus, a (simplified) Cayley graph is locally finite if and only if the selected generating set $S$ is finite. Furthermore, a group has a locally finite (simplified) Cayley graph if and only if it is finitely generated. 

There is an obvious way for a group $\Gamma$ to act automorphically on its Cayley graph $C(\Gamma,S)$ by multiplication on the left. We call this \textit{the canonical action of $\Gamma$ on $C(\Gamma, S)$}. We analogously define $c'$, the \textit{canonical action of $\Gamma$ on $C'(\Gamma,S)$}. Notice that the canonical action of a group on its Cayley graph is free, whereas the canonical action of a group on its simplified Cayley graph may fix some edges.

We can extend the Cayley construction to 2-complexes. Let $\Gamma$ be a group and consider its presentation $<S|R>$. Each relator $r\in R$ naturally defines a set $W_r$ of closed walks in $C(\Gamma,S)$. Specifically, $W_r$ contains every closed walk $W$ of $C(\Gamma,S)$ such that the word $r$ is obtained by concatenating the colours of the consecutive edges of $W$. For every $r\in R$ and for every walk $W\in W_r$, we introduce a 2-cell (said to be \textit{of colour r}) and we identify its boundary with the circuit of $W$. The resulting 2-complex is the (right) \textit{Cayley complex} \[CC(\Gamma,S,R):=\{\Gamma, \{(g,h)|g^{-1}h\in S\},\{g(g,gs_1)...gs_1...s_n(gs_1...s_n,g)|s_1s_2...s_n\in R\}\}.\] Observe that the 1-skeleton of the Cayley complex $CC(\Gamma, S, R)$ is the Cayley graph $C(\Gamma,S)$.

Distinct closed walks associated with relators can be contained in the same circuit. Thus, a Cayley complex can contain multiple 2-cells with a common boundary. In analogy to simplified Cayley graphs, we can construct the (right) \textit{simplified Cayley complex} $CC'(\Gamma,S,R)$ by performing the above construction with $C'(\Gamma,S)$ as the 1-skeleton, then visiting each circuit of the resulting 2-complex which is the boundary of multiple 2-cells, and deleting all the excess 2-cells, leaving only a single 2-cell for any such given boundary. More compactly, \[CC'(\Gamma,S,R):=\{\Gamma,\{[g,h]|g^{-1}h \in S\},\{[g[g,gs_1]...gs_1...s_n[gs_1...s_n,g]]|s_1...s_n\in R\}\}.\] Observe that simplified Cayley complexes are always simple.

It is easy to assert that a (simplified) Cayley complex as defined above is locally finite if and only if the presentation $<S|R>$ is finite. Moreover, a group has a locally finite Cayley complex if and only if it is finitely presentable.

There is a natural way for a group $\Gamma$ to act automorphically on its Cayley complex $CC(\Gamma,S,R)$ by multiplication on the left. This is the \textit{canonical action of $\Gamma$ on $CC(\Gamma,S,R)$}. We can analogously define $cc'$, the \textit{canonical action of $\Gamma$ on $CC'(\Gamma, S, R)$}. Notice that the canonical action of a group on its Cayley complex is free, whereas the canonical action of a group on its simplified Cayley complex may fix some edges or 2-cells.

It is important to note that both Cayley complexes and simplified Cayley complexes are always simply connected. Indeed, each loop in a (simplified) Cayley complex is homotopic to a loop in its 1-skeleton, the latter corresponding to an identity word. But each identity word is the product of relators, therefore each loop is the concatenation of null-homotopic loops, hence null-homotopic itself.

Cayley complexes and simplified Cayley complexes are but special cases of a much wider class of $2$-complexes, the generalised Cayley complexes. A $2$-complex $H=(V,E,C)$ is a \textit{generalised Cayley complex} of a group $\Gamma$ if $H$ is simply connected and $\Gamma$ admits an action on $H$ that is regular on $V$. Generalised Cayley complexes are important to our work; we will demonstrate that they correspond naturally to faithful topological actions of finite groups on $\mathbb{S}^3$. 
}

\subsection{Embeddings and Chambers} \label{chambers}

An \textit{embedding} of a space $X$ in a space $Y$ is homeomorphism between $X$ and a subspace of $Y$.

For an embedding $\phi:X\rightarrow M$ of a $2$-complex $X$ into a 3-manifold $M$, we call each connected component of $M \setminus \phi(X)$  a  \emph{chamber}. The \defi{boundary} $\partial C$ of a chamber $C$ is the set of points $x \in \phi(X)$ in the closure of $C$ that are not in the interior of $C$. Indeed, as $C$ is an open set, $\partial C$ is disjoint from $C$. The following basic fact helps to further explain the notion.

\begin{proposition}[{\cite[Proposition 2.1]{Whitney3D}}] \label{chambers}
Let $\phi:X\rightarrow \BS^3$ be an embedding of a finite,  $2$-complex $X$, \st\ every 0-cell and 1-cell of $X$ is contained in a 2-cell.  Then $\partial C$ is a union of 2-cells of $X$ \fe\ $\phi$-chamber $C$.\qed
\end{proposition}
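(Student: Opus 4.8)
The plan is to show that $\partial C$ is exactly the union of those closed $2$-cells $\phi(f)$ whose open part lies entirely in $\partial C$. Throughout, for $f\in X^2$ write $f^\circ$ for the interior of the $2$-cell, so that $\phi(f^\circ)$ is an open disc topologically embedded in $\BS^3$ with $\overline{\phi(f^\circ)}=\phi(f)$. Since $X$ is finite, $\phi(X)$ is compact, hence closed, so each chamber $C$ is open, is relatively closed in $\BS^3\setminus\phi(X)$, and therefore $\partial C=\overline{C}\cap\phi(X)$ is closed and contained in $\phi(X)$. The hypothesis that every $0$- and $1$-cell lies in a $2$-cell gives the crucial identity $\phi(X)=\bigcup_{f\in X^2}\phi(f)$. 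Hence it suffices to establish the following two statements:
\begin{enumerate}
\item[(A)] for each $f\in X^2$, either $\phi(f^\circ)\cap\partial C=\emptyset$ or $\phi(f^\circ)\subseteq\partial C$; and
\item[(B)] every point of $\partial C$ lies in $\phi(f)$ for some $f$ with $\phi(f^\circ)\subseteq\partial C$.
\end{enumerate}
Indeed, (A) and (B), together with the closedness of $\partial C$ (which upgrades $\phi(f^\circ)\subseteq\partial C$ to $\phi(f)=\overline{\phi(f^\circ)}\subseteq\partial C$), would give $\partial C=\bigcup\{\phi(f):\phi(f^\circ)\subseteq\partial C\}$, as desired.

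For (A), I would use that $\phi(f^\circ)$ is an embedded $2$-manifold that is open in $\phi(X)$, so around any $x\in\phi(f^\circ)$ there is a ball $B$ with $B\cap\phi(X)\subseteq\phi(f^\circ)$. The key local input is that a codimension-one topological submanifold is two-sided: $B\setminus\phi(f^\circ)$ has exactly two components $B^+,B^-$, each lying in a single chamber and each accumulating on every point of $\phi(f^\circ)\cap B$. Granting this, the set $\{x\in\phi(f^\circ):x\in\partial C\}$ is open (if $x\in\overline{C}$ then $C$ contains $B^+$ or $B^-$, whose closure contains all nearby points of $\phi(f^\circ)$) and closed (as $\partial C$ is closed in $\BS^3$); since $\phi(f^\circ)$ is connected, it is empty or everything.

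For (B), I would fix $x\in\partial C$ and choose a ball $B$ so small that $B$ meets only the closed cells whose closure contains $x$. As $x\in\overline{C}$, some component $S$ of $B\setminus\phi(X)$ with $x\in\overline{S}$ lies in $C$. Here I would invoke that a $1$-complex has codimension $\ge 2$ and so does not locally separate $\BS^3$: the complement $B\setminus\phi(X^1)$ of the compact graph $\phi(X^1)$ in $B$ is connected. If the frontier of $S$ were contained in $\phi(X^1)$, then $S$ would be simultaneously open and closed in $B\setminus\phi(X^1)$, hence all of it; but $B\setminus\phi(X^1)$ contains points of $\phi(f^\circ)$ (nonempty since $x$ lies in some $2$-cell $f$, so every ball about $x$ meets $\phi(f^\circ)$), whereas $S$ is disjoint from $\phi(X)$, a contradiction. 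Thus the frontier of $S$ meets $\phi(f^\circ)$ for some $2$-cell $f$; by the smallness of $B$ this $f$ satisfies $x\in\phi(f)$. Finally, points of $\phi(f^\circ)$ on the frontier of $S$ are limits of points of $S\subseteq C$, hence lie in $\partial C$, so by (A) we get $\phi(f^\circ)\subseteq\partial C$, and $x\in\phi(f)$ is covered.

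The main obstacle is the two-sidedness used in (A). For locally flat (in particular, PL) embeddings it is immediate, but $\phi$ is only assumed to be a topological embedding, so $\phi(f^\circ)$ could be wildly embedded; one must then argue that a possibly non-locally-flat codimension-one topological submanifold still locally separates $\BS^3$ into two pieces, each having the submanifold in its closure. I would handle this by local homology, applying Alexander duality to small spheres centred at $x$ to compute the number of local complementary components without assuming local flatness; this is the argument underlying \cite[Proposition~2.1]{Whitney3D}.
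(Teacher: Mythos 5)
The paper offers no proof of this proposition at all---it is imported verbatim from \cite{Whitney3D} with an immediate end-of-proof mark---so there is no in-paper argument to compare yours against, and I can only judge your argument on its own terms. Your decomposition into (A) and (B), together with the final assembly via the closedness of $\partial C$ and the identity $\phi(X)=\bigcup_{f\in X^2}\phi(f)$, is the natural route and is presumably close to what the cited source does. Part (B) is essentially sound: the only nontrivial input is the dimension-theoretic fact that a closed, at most $1$-dimensional subset does not disconnect an open $3$-ball, which is standard, and your clopenness argument for the component $S$ is correct. (A minor point: you neither need nor justify the existence of a component $S$ of $B\setminus\phi(X)$ with $x\in\overline{S}$; any component of $B\cap C$ works, since its frontier in $B$ is nonempty and contained in $\phi(X)\cap B$, and the smallness of $B$ already forces the $2$-cell you find to contain $x$.)

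The genuine gap is in (A). The local picture you assert---that $B\setminus\phi(f^\circ)$ has exactly two components, each accumulating on every point of $\phi(f^\circ)\cap B$---is false for a merely topological embedding: even when $B\cap\phi(X)\subseteq\phi(f^\circ)$, a wildly embedded disc can re-enter a round ball $B$ in many (even infinitely many) sheets, so $B\setminus\phi(f^\circ)$ can have many components, and one cannot in general shrink $B$ to a ball meeting $\phi(f^\circ)$ in a single connected piece while keeping control of the topology of $B$. More importantly, the repair you propose---``applying Alexander duality to small spheres centred at $x$ to compute the number of local complementary components''---would not suffice even if carried out, because counting components is not what (A) requires: you need that a local complementary component whose closure contains $x$ also contains every nearby point of $\phi(f^\circ)$ in its closure. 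That is a local \emph{common frontier} statement, i.e.\ a local form of the Jordan--Brouwer separation theorem valid for possibly wild codimension-one submanifolds, and it is exactly the mathematical content of the proposition. Establishing it takes a real argument (for instance, the naturality of the duality isomorphism $\tilde H_0(U\setminus N)\cong\check H^{2}_c(N\cap U)$ as $U$ shrinks and as the basepoint moves along $N$, or an appeal to the generalized Jordan--Brouwer theorem after a suitable capping-off). As written, the crux of the proof is replaced by a false intermediate claim plus a pointer to the wrong invariant, so I count this as a genuine gap.
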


%A subspace of a topological space is \defi{precompact}, if its closure is compact. 

%Given a group action $\Gamma \act M$ on a manifold, and an embedding of a 2-complex $\phi: X \to M$, we say that $\phi(X)$ is \defi{invariant} if $\Gamma \act M$ preserves 

\comment{
Suppose a group $\Gamma$ acts both on a manifold $M$ and a 2-complex $X$.  An embedding $f: X \to M$ is called \defi{$\Gamma$-equivariant} if the following diagram \mymargin{perhaps consider embedded complexes throughout and remove} commutes 
\catcode`\@=11
\newdimen\cdsep
\cdsep=3em

\def\cdstrut{\vrule height .6\cdsep width 0pt depth .4\cdsep}
\def\@cdstrut{{\advance\cdsep by 2em\cdstrut}}

\def\arrow#1#2{
	\ifx d#1
	\llap{$\scriptstyle#2$}\left\downarrow\cdstrut\right.\@cdstrut\fi
	\ifx u#1
	\llap{$\scriptstyle#2$}\left\uparrow\cdstrut\right.\@cdstrut\fi
	\ifx r#1
	\mathop{\hbox to \cdsep{\rightarrowfill}}\limits^{#2}\fi
	\ifx l#1
	\mathop{\hbox to \cdsep{\leftarrowfill}}\limits^{#2}\fi
}
\catcode`\@=12

\cdsep=3em
\[
\begin{matrix}
	\Gamma \times H                   & \arrow{r}{(id,f)}   & \Gamma \times M                    \cr
	\arrow{d}{\gamma} &                      & \arrow{d}{\phi} \cr
	H                 & \arrow{r}{f} & M                  \cr
\end{matrix}
\]  

where $\phi,\gamma$ denote the aforementioned actions.
}
%For instance, an equivariant embedding of the (simplified) Cayley complex $H$ of a group $\Gamma$ is implied to be equivariant in respect to the canonical action of $\Gamma$ over $H$ and some given action of $\Gamma$ over $M$. 

\subsection{Local flatness} \label{loc flat}

We recall the standard notion of local flatness. 
An embedding $\phi: \BS^2 \to M$, where $M$ is a 3-manifold, is \defi{locally flat}, if  for each $x\in \phi(\BS^2)$ there exists a neighbourhood $U_x$ of $x$ \st\ the topological pair $(U_x,U_x\cap \phi(X))$ is homeomorphic to $(\mathbb{R}^3, \mathbb{R}^2)$, by which we mean that there is a homeomorphism from $U_x$ to $\mathbb{R}^3$ mapping $U_x\cap \phi(X)$ to $\mathbb{R}^2 \subset \mathbb{R}^3$. (A \defi{topological pair} $(X,A)$ consists of a topological space $X$ and a subspace $A\subseteq X$.)

We can extend the notion of local flatness to an embedding  $\phi: X \to M$ of a $2$-complex $X$ instead of $\BS^2$: we say that $\phi$ is \defi{locally flat}, if the restriction of $\phi$ to each homeomorphic image of $\BS^2$ in $X$ is locally flat, as defined above.

\subsection{Rotation systems} \label{sec rot sys}

A \defi{rotation system} of a graph $G$ is a family $(\sigma_v)_{v\in V(G)}$ of cyclic orderings of the edges incident with each vertex $v\in V(G)$. Every embedding of $G$ on an orientable surface defines a rotation system, by taking $\sigma_v$ to be the clockwise cyclic ordering in which the edges incident to $v$ appear in the embedding. The rotation system $(\sigma_v)_{v\in V(G)}$ is said to be \defi{planar}, if it can be defined by an embedding of $G$ in the sphere $\BS^2$. 

Let $X$ be an edge-regular $2$-complex, and let $\overleftrightarrow{E}(X)$ denote the set of \defi{directions} of its 1-cells, that is, the set of directed pairs $\overrightarrow{xy}:= \left<x,y \right>$ such that $xy\in X^1$. Thus every 1-cell gives rise to two elements of $\overleftrightarrow{E}(X)$. 
A \defi{rotation system} of  $X$ is a family $(\sigma_e)_{e \in \overleftrightarrow{E}(X)}$ of cyclic orderings $\sigma_e$ of the 2-cells incident with each  $e = \overrightarrow{xy} \in \overleftrightarrow{E}(X)$, such that if $e'=\overrightarrow{yx}$, then $\sigma_{e'}$ is the reverse of $\sigma_e$. A rotation system $(\sigma_e)_{e \in \overleftrightarrow{E}(X)}$ of  $X$ induces a rotation system $\sigma^v$ at each of its link graphs $L_X(v)$ by restricting to the directions of 1-cells emanating from $v$:  \fe\ $u\in V(L_X(v))$ we let $\sigma^v_{u}$ be the cyclic order obtained from 
$\sigma_{\overrightarrow{vu}}$ by replacing each 2-cell $f$ appearing in the latter by the edge $uw$ where $w$ is the unique neighbour $u$ in $V(L_X(v))$ such that $w,v,u$ appear consecutively in $f$.

% = \langle w_1,\dots, w_s\rangle  $ where $\sigma_{\overrightarrow{vu}}= \langle f_1,\dots, f_s\rangle  $ and $w_i,v, u$ are consecutive in each $f_i$ for each $i\in [s]$. 
%that are vertices of the link graph $Lk(v)$;  $\sigma(e)$ if $e$ is directed towards $v$ and the reverse of $\sigma(e)$ otherwise. 
A rotation system of a regular $2$-complex $X$ is \defi{planar}, if it induces a planar  rotation system on each of its link graphs.
Note that once, we fix an orientation, every locally flat embedding $\phi$ of $X$ into $\BS^3$ or $\R^3$ defines a planar rotation system, by letting $\sigma_e$ be the cyclic order in which the images of the 2-cells incident with $e$ appear in $U_x$, where $x$ is any interior point of $\phi(e)$, and $U_x$ is as in the definition of local flatness (\Sr{loc flat}).

\subsection{Invariant rotation systems} \label{invariant prs}

Suppose that a group $\Gamma$ acts on a $2$-complex $X$ by a faithful action $\Gamma \act X$. Let $\Sigma$ be the set of all rotation systems $\bsig= (\sigma_e)_{e \in \overleftrightarrow{E}(X)}$ on $X$ as defined in the previous subsection. Then we can let $\Gamma$ act on $\Sigma$ by elementwise multiplication as follows.  Recall that $\sigma_e$ is formally a ternary `betweenness' relation on the set $F(e)$ of 2-cells  containing $e$ \fe\ 1-cell $e$. For $g\in \Gamma$, we define the product $g \cdot \sigma_e:= \{ [ga,gb,gc]\mid [a,b,c]\in \sigma_e\}$, which is a cyclic ordering on $F(ge)$. This defines our action $g \cdot \bsig:= (g \cdot \sigma_e)_{e \in E(X)}$ of  $\Gamma$ on $\Sigma$. We will say that \bsig\ is $\Gamma$-\defi{invariant}, if $g \cdot \bsig$ coincides with $ \bsig$ up to a global change of orientation. To make this more precise, let $\eta: \Gamma \to \Z_2$ be a homomorphism from $\Gamma$ to the group $\Z_2$; we will use $\eta$ to carry the information of which $g\in \Gamma$ preserve/reverse the orientation. We say that \bsig\ is \defi{invariant} \wrt\ $\eta$, if
\labtequc{def cov}{$g \cdot \sigma_e =(-1)^{\eta(g)} \sigma_{ge}$,} 
holds \fe\ $e \in \overleftrightarrow{E}(X)$ and $g\in \Gamma$. %, where $gd:=(gv,ge)$. %The product $g \cdot \sigma_e$ denotes the elementwise multiplication: recall that $\sigma_e$ is formally a ternary `betweenness' relation on $F(e)$, and so $g \cdot \sigma_e:= \{ [ga,gb,gc]\mid [a,b,c]\in \sigma_e\}$ is a cyclic ordering on $F(ge)$, which can be compared with $\sigma_{ge}$. 
We say that \bsig\ is \defi{$\Gamma$-invariant} if it is invariant with respect to some homomorphism $\eta$. Note that $\eta$ is uniquely determined by \bsig\ if it exists.  

%It is important to note that if $\phi:X\rightarrow \mathbb{S}^3$ is a $\Gamma$-equivariant embedding of a $2$-complex $X$ in $\mathbb{S}^3$, then the rotation system $\bsig(\phi)$ is a planar, $\Gamma$-invariant rotation system, with $\eta(g)$ being $1$ if $g$ reverses the orientation, and $0$ otherwise.

\section{From Invariant Planar Rotation Systems to Invariant Cayley Complex Embeddings} \label{sec ii to iv}

The implication \ref{m ii} $\to$ \ref{m iii} of \Tr{main thm} is an immediate sequence of the following result of Carmesin:

\begin{theorem}[\cite{CarEmbII}] \label{JCtheorem}
A finite, simply connected, simplicial 2-complex admits an embedding $\phi$ in $\BS^3$ if and only if it admits a planar rotation system $\sigma$. Moreover, $\phi$ can be chosen so that $\sigma(\phi)$ coincides with $\sigma$.\footnote{The second statement is not stated explicitly in \cite{CarEmbII} but it is easily implied by the construction of the embedding.}
\end{theorem}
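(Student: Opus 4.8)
The plan is to prove both implications, treating the passage from a planar rotation system to an embedding as the substantial direction, and to regard the converse as the easy bookkeeping direction. The guiding analogy is the one-dimensional case (MacLane's criterion): a finite graph is planar precisely when it carries a planar rotation system, and there the embedding is reconstructed from the rotation system by the classical face-tracing algorithm. I would aim to run the same philosophy one dimension up, tracing \emph{chamber boundaries} instead of faces.

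For the easy direction, suppose $\phi\colon X\to\BS^3$ is an embedding; I want to exhibit a planar rotation system, namely $\sigma(\phi)$. First I would reduce to a locally flat (or PL) embedding, using triangulability of $\BS^3$ and the fact that a finite simplicial $2$-complex embedded in a $3$-manifold may be taken tame; this only affects existence, which is all that the ``only if'' direction requires. Then, around the image of each vertex $v$, a small sphere $S_v\cong\BS^2$ meets $\phi(X)$ in a graph isomorphic to the link graph $L_X(v)$: each $1$-cell at $v$ contributes a point of $S_v$ and each $2$-cell at $v$ contributes an arc joining the two relevant points, with incidences matching the definition of $L_X(v)$. Hence $L_X(v)$ embeds in $S_v\cong\BS^2$, and the cyclic order in which arcs emanate from each point is exactly the induced rotation $\sigma^v$. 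So $\sigma(\phi)$ induces a planar rotation system on every link graph, and is therefore planar.

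For the converse, suppose $X$ carries a planar rotation system $\sigma$. The idea is to read off from $\sigma$ the surfaces that ought to bound the chambers, build abstract $3$-balls behind them, and glue everything into a closed $3$-manifold. Concretely, the cyclic order $\sigma_e$ around a directed $1$-cell $e$ prescribes, for each $2$-cell $f$ incident with $e$, the $2$-cell lying immediately next to $f$ across $e$ on a fixed side; iterating this adjacency rule --- together with the planar embeddings of the link graphs, which dictate how a traced surface passes through each vertex --- partitions the set of \emph{sides} of $2$-cells into closed combinatorial surfaces $S_1,\dots,S_m$, each assembled from copies of $2$-cells of $X$, with every face-side used exactly once. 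I would then argue that each $S_i$ is a $2$-sphere, cap off each by an abstract $3$-ball $B_i$ glued along $S_i$, and set $M:=X\cup\bigcup_i B_i$. A van Kampen argument (the balls are simply connected, $X$ is simply connected, and the intersection pattern is controlled by the $S_i$) gives $\pi_1(M)=1$, while a local analysis of the gluing shows $M$ is a closed $3$-manifold; by the Poincar\'e conjecture, $M\cong\BS^3$. The inclusion $X\hookrightarrow M\cong\BS^3$ is the desired $\phi$, and since its chambers are exactly the $B_i$ whose boundaries were traced using $\sigma$, the induced rotation system $\sigma(\phi)$ coincides with $\sigma$, yielding the ``moreover'' clause.

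The main obstacle is the claim that each traced surface $S_i$ is a $2$-sphere and that the glued space $M$ is a simply connected closed $3$-manifold. Planarity of each $L_X(v)$ should guarantee that at every vertex the traced surfaces are locally disc-like, hence that each $S_i$ is a closed orientable surface; the harder point is ruling out positive genus, which is where the simple connectivity of $X$ must be exploited, via a global Euler-characteristic or homological count forcing $\chi(S_i)=2$. Establishing global consistency of the tracing (that the local adjacency rule closes up into genuine closed surfaces rather than branched or self-intersecting ones) and that simple connectivity propagates from $X$ to $M$ is the real content, and it is precisely this step that ultimately draws on the Poincar\'e conjecture. By contrast, the easy direction and the verification of the ``moreover'' statement are routine once the construction is in place.
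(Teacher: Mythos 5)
Your overall strategy is the same as Carmesin's, which this paper only cites and then sketches in the proof of \Tr{inf Carm}: read off the pre-chambers (Carmesin's ``local surfaces'') from $\sigma$, fill them in to obtain a compact \sico\ 3-manifold $M$ with 2-skeleton $X$, and invoke the Poincar\'e conjecture to conclude $M\isom\BS^3$. Your easy direction (a small sphere around each vertex exhibits a planar embedding of the link graph) is also the standard one and is fine. The gap is in the filling step. You insist that every traced surface $S_i$ is a $2$-sphere and cap it with a $3$-ball, deferring the proof that $\chi(S_i)=2$ to ``a global Euler-characteristic or homological count'' exploiting $\pi_1(X)=1$. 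No such count is supplied, and it does not come for free: summing Euler characteristics over all local surfaces yields $\sum_i\chi(S_i)=2\chi(X)=2+2b_2(X)$, which forces every $S_i$ to be a sphere only if you already know that the number of local surfaces equals $1+b_2(X)$ --- and that is essentially equivalent to what you are trying to prove. The arguments that do force genus $0$ (Alexander duality giving $H_1=0$ for each chamber, then ``half lives, half dies'') presuppose the embedding you are constructing. Carmesin's construction sidesteps this entirely: to each pre-chamber $S$, of whatever genus, one attaches a \emph{solid surface} (handlebody) $\hat S$ with $\partial\hat S$ mapped onto $S$; the result is a compact \sico\ 3-manifold regardless, Perelman identifies it with $\BS^3$, and only a posteriori can one deduce that the $S_i$ were spheres. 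Your plan makes sphericity a prerequisite of the construction rather than a consequence, and that prerequisite is exactly the part you have not proved.

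A second, related issue: you glue each ball ``along $S_i$'' as if $S_i$ were embedded in $X$. In general the attachment map of a pre-chamber onto $X$ is not injective --- a single $2$-cell can occur twice on the boundary walk of one pre-chamber (a disc dangling into a chamber is traversed from both sides) --- so the gluing is along a non-embedded surface, and verifying that the quotient is a manifold at the vertices and edges of $X$ is where the planarity of the link rotation systems does its real work; this is the content of Carmesin's Lemma~4.5, quoted in the proof of \Tr{inf Carm}. Your remark that the consistency of the tracing is ``precisely [what] draws on the Poincar\'e conjecture'' conflates two different steps: the manifold property and the simple connectivity of $M$ are established by elementary local and combinatorial arguments, and the Poincar\'e conjecture enters only at the very end, to identify the resulting compact \sico\ 3-manifold with $\BS^3$.
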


Indeed, if $X$ is a \gcc\ of $\Gamma$, and $\sigma$ a $\Gamma$-invariant \prs\ on $X$, then we can apply the barycentric subdivision to turn $X$ into a simplicial 2-complex $X'$, extend $\sigma$ to $X'$ in the obvious way, and apply \Tr{JCtheorem} to $X'$ to obtain an embedding of $X'$, which induces an embedding $\phi: X \to \BS^3$ with $\sigma(\phi)= \sigma$.

\begin{remark} \label{rem lfl}
If a locally-finite 2-complex $X$ admits an embedding $\phi$ in $\BS^3$ (or any 3-dimensional submanifold of $\BS^3$), then we may assume $\phi$ to be \lfl\ (as defined in \Sr{loc flat}). Indeed, we can modify $\phi$ to make it piecewise-linear (PL) \cite[Appendix C]{MaTaWaHar}, and it is easy to see that any PL embedding of $X$ in $\BS^3$ is \lfl. %This can be achieved e.g.\ by using the fact that every smooth manifold admits a \defi{good cover}, i.e., a locally finite cover by open balls such that all nonempty intersections of the elements of this cover are still homeomorphic to open balls.
\end{remark}

The implication \ref{m ii} $\to$ \ref{m iii} of \Tr{inf thm} is more difficult, because \Tr{JCtheorem} does not extend to infinite 2-complexes:
\begin{theorem} \label{no inf Carm}
There is a locally-finite, simply connected, simplicial 2-complex  which admits a planar rotation system but does not admit an embedding in $\BS^3$. %\mymargin{add \lfl, or prove that it does not matter}
\end{theorem}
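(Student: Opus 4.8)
The plan is to violate the finiteness hypothesis of \Tr{JCtheorem} by exhibiting the spine of a simply connected open $3$-manifold $M$ that does not embed in $\BS^3$. First I would fix such an $M$: a contractible (hence simply connected) open $3$-manifold that fails to embed in $\BS^3$. Such manifolds are known to exist; concretely one may take a ``genus-accumulating'' Whitehead-type manifold, realised as a monotone union $M=\bigcup_i H_i$ of handlebodies with $H_i\subset \mathrm{int}\,H_{i+1}$, whose genera tend to infinity and whose gluings are arranged so that $M$ embeds in no compact $3$-manifold, in particular not in $\BS^3$. The mapping telescope of the exhaustion $(H_i)$ deformation retracts onto a locally finite $2$-dimensional spine $X_0\subset M$; after a barycentric subdivision (\Sr{ccs}) I may take $X$ to be a locally finite simplicial $2$-complex that is a \lfl\ spine of $M$. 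Since $X$ is homotopy equivalent to $M$, it is simply connected.

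The existence of a planar rotation system is the easy half and comes for free from the ambient manifold. The inclusion $X\hookrightarrow M$ is a locally flat embedding of a locally finite $2$-complex into a $3$-manifold, so around an interior point of each $1$-cell $e$ the incident $2$-cells meet a small transverse disc in a well-defined cyclic order; recording these orders (as in \Sr{sec rot sys}) yields a rotation system $\sigma$ of $X$. Local flatness makes each link graph $L_X(v)$ embed in a small sphere about $v$, so $\sigma$ induces a planar rotation system on every link graph; hence $\sigma$ is planar, and $X$ admits a planar rotation system as required.

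It remains to show $X$ does not embed in $\BS^3$. Suppose it did. By Remark~\ref{rem lfl} I may assume the embedding $\psi:X\to\BS^3$ is PL and \lfl. A regular neighbourhood $N$ of $\psi(X)$ is then an open $3$-manifold that deformation retracts onto $\psi(X)\cong X$, so $N$ is a simply connected open submanifold of $\BS^3$ having $X$ as a spine. The key point is that the homeomorphism type of the orientable thickening of a $2$-complex is determined by its planar rotation system up to a global reflection. I would therefore engineer $X$ so that its planar rotation system is \emph{unique up to reflection}: making each link graph $L_X(v)$ be $3$-connected forces, by Whitney's theorem, a unique planar embedding of each link up to reflection, and a global $3$-connectivity plus simple-connectivity argument (in the spirit of Carmesin's rigidity results) propagates this to uniqueness of $\sigma$. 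Given such rigidity, the rotation system $\sigma(\psi)$ read off from $\psi$ coincides with $\sigma$ up to reflection, whence $N\cong M$. But then $M$ embeds in $\BS^3$, contradicting the choice of $M$; so no such $\psi$ exists.

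The main obstacle is twofold, and lies entirely in making the two ends of the argument meet. First, one must pin down a concrete simply connected open $3$-manifold that provably does not embed in $\BS^3$ while still carrying a locally finite $2$-dimensional spine; this non-embeddability is the substantive external input and should be quoted from the literature on contractible open $3$-manifolds rather than reproved. Second, and more delicate, is transferring non-embeddability of $M$ to non-embeddability of $X$: a priori $X$ might embed in $\BS^3$ with a \emph{different} planar rotation system, yielding a thickening unrelated to $M$. Securing the rigidity of $\sigma$ (uniqueness up to reflection), so that every locally flat embedding of $X$ regenerates $M$ as its regular neighbourhood, is where the real work sits. If full rigidity proves awkward, the fallback is to detect the unbounded genus of the exhaustion $(H_i)$ directly inside $X$ as an intrinsic obstruction surviving any embedding into $\BS^3$.
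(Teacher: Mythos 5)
Your first two steps match the paper: the paper also starts from a contractible open $3$-manifold $W$ that embeds in no compact $3$-manifold, and also reads off a planar rotation system from the inclusion of a $2$-complex into $W$. But where the paper takes $X$ to be the $2$-skeleton of a triangulation of $W$, you take a $2$-dimensional spine, and this choice forces you into the step you yourself flag as ``where the real work sits'' --- and that step is a genuine gap, not a deferrable technicality. Your argument needs \emph{every} locally flat embedding $\psi:X\to\BS^3$ to regenerate $M$ as (the interior of) a regular neighbourhood of $\psi(X)$. That requires both (a) that $M$ actually \emph{is} the interior of a regular neighbourhood of your spine (deformation retraction is far too weak: the Whitehead manifold deformation retracts to a point but is not $\R^3$), and (b) that the planar rotation system of $X$ is unique up to reflection, so that $\sigma(\psi)$ cannot differ from $\sigma$. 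Neither is established; (b) in particular is a global rigidity statement for an \emph{infinite} complex, and Whitney-type uniqueness of the link embeddings does not by itself propagate to a unique rotation system. Your fallback (detecting unbounded genus ``intrinsically'') is not an argument.

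The paper sidesteps all of this with one observation you are missing. Taking $X=T^2$ for a triangulation $T$ of $W$ (arranged, via barycentric subdivision, so that no $3$-cell boundary $\partial C$ separates $X$), suppose $f:X\to\BS^3$ is any locally flat embedding. Each $\partial C$ is an embedded $2$-sphere, hence separates $\BS^3$ into two components; since $\partial C$ does not separate $X$, one of these components $A$ is disjoint from $f(X)$, and by the generalised Schoenflies theorem $A$ is an open ball. One can therefore embed the $3$-cell $C$ into $A$, and doing this for every $3$-cell extends $f$ to an embedding of all of $W$ into $\BS^3$ --- a contradiction, with no control over $\sigma(f)$ needed at all. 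In short: instead of trying to prove the embedding is essentially unique, fill in the $3$-cells of an arbitrary embedding. If you want to salvage your write-up, replace the spine by the $2$-skeleton of a triangulation and replace the rigidity step by this Schoenflies filling argument.
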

%%%%%%%%%%
\begin{proof}
It is known that there is a contractible, hence simply connected, open 3-manifold $W$  which does not embed in any compact 3-manifold, let alone in $\BS^3$; see e.g.\ \cite{GuCon} and references therein. Let $T$ be a triangulation of such a manifold $W$. It is easy to see that if a simplicial complex $X$ has a topological embedding into some oriented 3-dimensional manifold, then it has a planar rotation system \cite{CarEmbII}. Letting $X$ be the 2-skeleton of $T$, we thus deduce that $X$ has a planar rotation system, since it embeds in $W$. 

We may assume \obda\ that there is no 3-cell $C$ of $T$ the boundary $\partial C$ of which separates $W$, because even if $T$ does not have this property its barycentric subdivision $T'$ will, and we could have chosen $T'$ instead of $T$. Thus no $\partial C$ separates $X$.

Suppose now that $X$ admits an embedding $f$ in $\BS^3$. By Remark~\ref{rem lfl}, we may assume that $f$ is \lfl. Then we can extend $f$ into an embedding of $W$ in $\BS^3$ as follows. For every  3-cell $C$ of $T$, we observe that $\partial C \subset X$ is homeomorphic to $\BS^2$, hence it separates $\BS^3$ into two components. One of these components $A$ is disjoint from $f(X)$, because $\partial C$ does not separate $X$ as mentioned above. Since $f$ is \lfl, $A$ is homeomorphic to $\R^3$ by the generalised Schoenflies theorem \cite{BrownGS, MazurGS}. Thus we can embed $C$ onto $A$. Doing so for each 3-cell $C$ of $T$, we obtain an embedding of $W$ into $\BS^3$, contradicting the choice of $W$.
\end{proof}

Despite the fact that \Tr{JCtheorem} fails for infinite 2-complexes in general, it does hold for Cayley-complexes:

\begin{theorem} \label{inf Carm}
Let $C$ be a finitely-presented \Cc\ admitting an invariant  planar rotation system $\sigma$ with finite pre-chambers. Then $C$ admits an embedding
$\phi$ into $\BS^3$  such that $\sigma(\phi)$ coincides with $\sigma$.
\mymargin{\tiny Moreover, if $C$ is 1-ended, then $\phi$ can be chosen to be accumulation-free in $\R^3$}
\end{theorem}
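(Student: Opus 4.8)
The plan is to manufacture a 3-manifold out of the rotation system, identify it by means of \Tr{Agol}, and then exploit the fact that all four special 3-manifolds embed in $\BS^3$. First I would construct, from the \prs\ $\sigma$, a \emph{thickening}: an orientable 3-manifold-with-boundary $N$ containing $C$ as a spine and realising $\sigma$. Concretely, thicken each vertex $v$ to a ball whose boundary sphere carries the planar embedding of the link graph $L_C(v)$ prescribed by $\sigma$, thicken each 1-cell to a cylinder in which the incident 2-cells appear as half-discs in the cyclic order $\sigma_e$, and thicken each 2-cell to a slab; the homomorphism $\eta$ of \Sr{invariant prs} supplies a coherent orientation so that these pieces glue consistently. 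Since the construction is local and $\sigma$ is $\Gamma$-invariant, the action $\Gamma \act C$ extends to an action $\Gamma \act N$ by homeomorphisms, and by design the inclusion $C \hookrightarrow N$ has rotation system exactly $\sigma$. As $N$ deformation retracts onto the \sico\ complex $C$, the manifold $N$ is \sico.

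Next I would cap off the boundary. The components of $\partial N$ correspond to the pre-chambers of $\sigma$, and since $N$ meets the boundary sphere of each vertex-ball in the disc-faces of the planar embedding of $L_C(v)$, every boundary component is assembled from discs glued along bands; the planarity of the link graphs forces each such surface to be a 2-sphere (this is the local content of the analysis underlying \Tr{JCtheorem}). Finiteness of the pre-chambers guarantees that these spheres are compact, hence genuine closed 2-spheres, and that $\Gamma$ permutes them in only finitely many orbits. I would then glue a 3-ball onto each boundary sphere, equivariantly within each orbit, obtaining a boundaryless 3-manifold $\hat N$ on which $\Gamma$ still acts by homeomorphisms; van Kampen's theorem keeps $\hat N$ \sico.

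The crucial point is that this action is discrete. As $C$ is locally finite, $\Gamma$ acts freely on its vertices, and $C/\Gamma$ is finite (finite presentation), the action $\Gamma \act N$ is \pd\ and co-compact; adding the finitely many orbits of caps preserves both properties, so $\Gamma \act \hat N$ is \pd\ and co-compact. Now \Tr{Agol} applies and identifies $\hat N$ with one of the four special manifolds: $\BS^3$, $\R^3 \isom \BS^3 \sm \{pt\}$, $\BS^2 \times \R \isom \BS^3 \sm \{p,q\}$, or the \CtS\ $\isom \BS^3 \sm C_0$ for a tame Cantor set $C_0$. In every case there is an embedding $\iota\colon \hat N \to \BS^3$; composing $\iota$ with the inclusion $C \hookrightarrow N \subseteq \hat N$, and post-composing with a reflection of $\BS^3$ if needed to match orientations, yields an embedding $\phi\colon C \to \BS^3$ with $\sigma(\phi)=\sigma$.

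I expect the main obstacle to be the second step: verifying cleanly that each boundary component of the thickening is a 2-sphere (rather than a surface of higher genus) and that the capping can be carried out $\Gamma$-equivariantly while keeping $\hat N$ a manifold carrying a \pd, co-compact action. The sphericity is precisely where the planarity of the rotation system is consumed, and the co-compactness of $\hat N$ is precisely where the hypothesis of finite pre-chambers is indispensable: without it (for instance for $\Z^2$, whose standard \Cc\ embeds in $\R^3$ with two infinite pre-chambers) the capped manifold would fail to be co-compact and \Tr{Agol} could not be invoked.
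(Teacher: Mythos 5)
Your proposal follows essentially the same route as the paper: build a 3-manifold $T(C,\sigma)$ out of the rotation system by thickening $C$ and filling in the pre-chambers, use the $\Gamma$-invariance of $\sigma$ to extend the action on $C$ to a \pd, co-compact action on that manifold, and then substitute \Tr{Agol} for the Poincar\'e conjecture, finishing with the observation that every special 3-manifold embeds in $\BS^3$. The one point where you diverge --- and where your justification is not right --- is the capping step: you claim that every boundary component of the thickening is a 2-sphere because the link graphs are planar, but planarity of the links only guarantees that the boundary is a closed orientable surface, not that it has genus $0$. The paper, following Carmesin's construction of $T(C,\sigma)$ in \cite{CarEmbII}, sidesteps this entirely by attaching to each pre-chamber the \emph{solid surface} (handlebody) of whatever genus it happens to have, rather than a ball; manifoldness, orientability and preservation of $\pi_1$ are then quoted from Carmesin's Lemmas 4.5--4.6. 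If you insist on spheres, the correct source of genus $0$ is simple connectivity rather than planarity: the thickening $N$ is \sico, so a half-lives-half-dies argument applied to its compact boundary components forces them to be spheres --- but this detour is unnecessary for the argument to go through.
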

%%%%%%%%%%
\begin{proof}
We follow the lines of the proof of \Tr{JCtheorem} in \cite{CarEmbII}, the main difference being that we apply  \Tr{Agol} instead of the Poincar\'e conjecture. This starts by defining a 3-manifold $M=T(C,\sigma)$, with 2-skeleton $C$ as follows. The \prs\ $\sigma$ induces a relation on the (directed) 2-cells of $C$, where two  directed 2-cells are \defi{related} if they appear consecutively is $\sigma_e$ for some common 1-cell $e$. The equivalence classes induced by this relation are called \defi{pre-chambers}. The intuition is that pre-chambers will coincide with the boundaries of chambers of the embedding of $C$ that we are constructing. For each pre-chamber $S$ thus defined, we attach a solid surface $\hat{S}$ to $C$, so that the attachment map ---which is not always injective--- maps the boundary of $\hat{S}$ onto $S$. This completes the definition of $M=T(C,\sigma)$, which Carmesin proves to be an oriented topological 3-manifold \cite[Lemma~4.5]{CarEmbII} (Carmesin works with finite $C$, but this proof extends verbatim to the infinite case). He then observes that $M$ is \sico\ if $C$ is (\cite[Lemma 4.6]{CarEmbII}); indeed, any loop in $M$ can be homotoped to one in $C$ by the construction of $M$. Finally, Carmesin observes that when $C$ is finite then $M$ is compact, hence homeomorphic to $\BS^3$ by the Poincar\'e conjecture! By construction, $C$ is embedded in $M\isom \BS^3$, and the rotation system of this embedding coincides with $\sigma$.

It remains to consider the case where $C$ is potentially infinite. 
Notice that the group of $C$ acts \pd ly and co-compactly on $C$, hence on $M$. Here we used the fact that $\sigma$ is invariant in order to extend the group action from $C$ to $M$.  Again, $M$ is \sico\ because $C$ is. Thus $M$ is homeomorphic to one of the 3-manifolds of \Tr{Agol}, each of which embeds in $\BS^3$.  
Again, since $C$ embeds in $M$ with rotation system $\sigma$ by construction, the statement follows.
%Thus $M$ embeds in $\BS^3$  by Ryan Budney's MO answer \cite{BudneyMO}. 

\mymargin{\tiny If $C$ is 1-ended, then it homeomorphic to $\R^3$. Since $C$ is embedded in $M$ accumulation-free by definition, the second statement follows.}
\end{proof}

This establishes the implication \ref{inf ii} $\to$ \ref{inf iii} of \Tr{inf thm}. However, a closer inspection of the last proof reveals that we can obtain the stronger implication \ref{inf ii} $\to$ \ref{inf iv} (of both \Tr{inf thm} and \Tr{main thm}):

\begin{corollary} \label{cor inf Carm}
Let $C$ be a finitely-presented \Cc\ of a group $\Gamma$, admitting a $\Gamma$-invariant planar rotation system $\sigma$ with finite pre-chambers. Then $C$ admits an embedding $\phi: C \to M$ into a special 3-manifold $M$ \st\ $\phi(C)$ is invariant under some \pd, co-compact action $\Gamma \act M$.
\end{corollary}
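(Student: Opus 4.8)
The plan is to obtain the asserted equivariant embedding directly from the manifold $N := T(C,\sigma)$ constructed in the proof of \Tr{inf Carm}; no genuinely new construction is required. What has to be checked is that the action $\Gamma \act N$ already produced there leaves $\phi(C)$ invariant and is, in addition, faithful, \pd\ and co-compact, and that these properties survive the transport to the special $3$-manifold.

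First I would make the extension of the action explicit. The $\Gamma$-invariance of $\sigma$, in the form of \eqref{def cov}, says that the consecutivity relation among directed $2$-cells that defines the pre-chambers is preserved (up to orientation) by every $g\in\Gamma$; hence $g$ carries each pre-chamber $S$ to the pre-chamber $gS$. Therefore $\Gamma$ permutes the solid surfaces $\hat S$ glued to $C$ in the construction of $N$ by $g\cdot\hat S=\widehat{gS}$, and combining these homeomorphisms with the given action $\Gamma\act C$ yields the action $\Gamma\act N$ used in \Tr{inf Carm}. Since $C$ is a genuine \Cc, this action restricts on $C$ to the original one, which is regular on $V(C)$; in particular $\Gamma\act N$ is faithful, regular on vertices, and leaves $\phi(C)=C$ invariant by construction.

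Co-compactness is then straightforward. Co-compactness of $\Gamma\act C$ gives \fim\ orbits of cells of $C$; since each directed $2$-cell lies in exactly one pre-chamber, every $2$-cell lies in at most two pre-chambers, so the finiteness of the pre-chambers forces \fim\ orbits of pre-chambers, hence of solid surfaces. As each $\hat S$ is compact (its boundary is the finite surface $S$), adjoining one representative $\hat S$ from each orbit to a compact fundamental domain of $C$ produces a compact fundamental domain for $N$, so $N/\Gamma$ is compact.

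The step I expect to be the main obstacle is proper discontinuity, and it is precisely here that the hypothesis of finite pre-chambers is indispensable. Finite presentability makes $C$ locally finite, and the finiteness of the pre-chambers makes the attachment of the solid surfaces a locally finite operation; hence the $\Gamma$-equivariant decomposition of $N$ into cells and solid surfaces is locally finite and $N$ is locally compact. Moreover every piece has finite $\Gamma$-stabiliser: for cells because $\Gamma\act C$ is \pd, and for a solid surface $\hat S$ because its stabiliser maps into the stabiliser of the finite pre-chamber $S$, which is finite as it acts with trivial point-stabilisers on the finite vertex set of $S$. Given a compact $K\subseteq N$, local finiteness lets me cover $K$ by \fim\ pieces $P_1,\dots,P_n$, and for each pair $(P_i,P_j)$ the set $\{g\in\Gamma : gP_i\cap P_j\neq\emptyset\}$ is finite, since only \fim\ translates of $P_i$ can meet the compact piece $P_j$ and each translate accounts for only \fim\ group elements (the stabiliser being finite); summing over the \fim\ pairs shows that $\{g : gK\cap K\neq\emptyset\}$ is finite. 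Finally, transporting the whole picture through the homeomorphism $N\isom M$ onto the special $3$-manifold $M$ furnished by \Tr{inf Carm} preserves faithfulness, proper discontinuity and co-compactness, as these are topological invariants, and sends $\phi(C)$ to an invariant copy of $C$; this yields the embedding $\phi\colon C\to M$ with the required properties.
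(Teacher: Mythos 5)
Your proposal is correct and follows essentially the same route as the paper: both obtain the result by taking $M=T(C,\sigma)$ from the proof of Theorem~\ref{inf Carm}, extending the action $\Gamma\act C$ to $M$ via the $\Gamma$-invariance of $\sigma$ (which forces $\Gamma$ to permute the pre-chambers and hence the attached solid surfaces), and noting that $M$ is special. The only difference is that you spell out the verification of proper discontinuity and co-compactness, which the paper leaves as an assertion carried over from the action on $C$; your added detail is sound.
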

%%%%%%%%%%
\begin{proof}
Define $M=T(C,\sigma)$ as in the proof of \Tr{inf Carm}. Notice that pre-chambers are defined using $\sigma$, and so the action of $\Gamma$ on $C$ preserves pre-chambers. It follows that the action of $\Gamma$ on $C$ extends to an action on $M$ by homeomorphisms. As already observed,  $M$ is a special 3-manifold, and this action is \pd\ and co-compact.
\end{proof}

\mymargin{\small 

Remark: perhaps we can drop the finite pre-chambers condition if we work with manifolds $M$ with boundary.
}

\comment{
	We remark that  \Tr{JCtheorem} can be adapted so as to  respect group actions:
\begin{theorem} \label{sig to phi}
Let $X$ be a finite, \sico,  2-complex, and $\sigma$ a \prs\ on $X$ that is invariant \wrt\ a group $\Gamma\subseteq Aut(X)$. Then there is a topological action $\Gamma \act \BS^3$, and a \ple\  $\phi: X \to \BS^3$ that is  equivariant \wrt\ $\Gamma \act \BS^3$ and \st\ $\sigma(\phi)$ coincides with $\sigma$.
\end{theorem}

\begin{proof}%[Proof (sketch)]
By \Tr{JCtheorem}, there is an embedding $\phi: X \to \BS^3$ \st\ $\sigma(\phi) = \sigma$. In particular, $\sigma(\phi)$ is $\Gamma$-invariant.

%By \Lr{lem 2-con}, \ti\ a \sico, locally $2$-connected, 2-complex $X'$ containing $X$ as a topological subspace, and a \ple\ $\phi': X' \to \BS^3$, \st\ $\phi'(X)= \phi(X)$.  Moreover, $\Gamma \act X$ extends to an action $\Gamma \act X'$  \wrt\ which $\sigma(\phi')$ is invariant.

%Alternatively, instead of applying  \Tr{JCtheorem} (which relies on the Poincar\'e conjecture) and then \Lr{lem 2-con} above, we can just use \Cr{cor 2-con}.

%From now on we proceed as in the proof of \Tr{complex to action}: we observe that ${\rm fat}(X')$ has an essentially unique embedding $\phi''$ into $\BS^3$  by \Tr{thm Whitney3D}, and this embedding extends $\phi'$ and hence $\phi$. As in that proof, we exploit the `uniqueness' of this embedding to extend $\Gamma \act {\rm fat}(X)$ into the desired action $\Gamma \act \BS^3$ \wrt\ which $\phi''$, and hence $\phi$, is equivariant. 

\comment{ This doesn't quite work: "The construction of the embedding $\phi:X \to \BS^3$ in the proof of \Tr{JCtheorem} in \cite{CarEmbII} is canonical: it proceeds by glueing a copy of a solid surface along each pre-chamber of $X$, to obtain a 3-manifold $T(X,\sigma)$ that is homeomorphic to $\BS^3$ (by Perelman's theorem). Since pre-chambers of $X$ are determined by $\sigma$, and the latter is $\Gamma$-invariant, 

the action of $\Gamma$ on $X$ can be extended to $T(X,\sigma)$. Thus $\phi$ is equivariant \wrt\ this action."
}
	\end{proof}
}
%Combined with \Lr{lem 1-con}, this proves the implication \ref{m ii} $\to$ \ref{m iii} (and\\ \ref{m ii} $\to$ \ref{m iv}), of \Tr{main thm}.

\section{From Invariant Cayley Complex Embeddings to Group Actions} \label{sec iii to iv}

Since the implication \ref{m iii} $\to$ \ref{m ii} of \Tr{main thm} is trivial, the previous section also establishes the implication 
\ref{m iii} $\to$ \ref{m iv}. 
The aim of this section is to re-prove this implication \ref{m iii} $\to$ \ref{m iv} by a more elementary method that avoids the Poincar\'e conjecture. (We do not have a proof of the analogous implication of \Tr{inf thm} avoiding the Geometrization Theorem.) Thus the reader can choose to skip this section. 

\medskip
The purpose of this section is a proof of the following theorem without using the Poincar\'e conjecture.
\begin{theorem} \label{complex to action}
Let $\Gamma$ be a  finite group, let $X$ be a generalised Cayley complex of $\Gamma$, and  $\phi:X \to \BS^3$ an  embedding with  $\Gamma$-invariant rotation system $\sigma(\phi)$. Then there is a faithful topological action  $\Gamma \act \BS^3$ fixing $\phi(X)$ as a set, and acting regularly on its vertices. %\mymargin{can we show that any non-\lfl\ embedding can be amended to be \lfl? We could apply \Tr{JCtheorem}} 
\end{theorem}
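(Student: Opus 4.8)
The plan is to keep the given action exactly as it is on $\phi(X)$, and to extend it across the complementary regions of $\phi(X)$ in $\BS^3$ one chamber at a time. First I would record the combinatorial consequences of the hypotheses. Since $X$ is a \gcc\ of the finite group $\Gamma$, it is finite and $\Gamma \act X$ is regular, hence faithful, on the vertices. Transporting through $\phi$, each $g\in\Gamma$ gives a self-homeomorphism $\hat g := \phi\circ g\circ\phi^{-1}$ of $\phi(X)$, and $g\mapsto\hat g$ is a faithful homomorphism into $\mathrm{Homeo}(\phi(X))$. Because $\sigma(\phi)$ is $\Gamma$-invariant, $\hat g$ preserves the cyclic order of $2$-cells around every $1$-cell, so it carries pre-chambers to pre-chambers; as each pre-chamber is precisely the set of $2$-cells bounding a chamber (Proposition~\ref{chambers}), this produces an action of $\Gamma$ on the finite set $\mathcal C$ of chambers with $\hat g(\partial C)=\partial(gC)$ for every chamber $C$. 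The task then reduces to extending each $\hat g$ over the chambers compatibly with composition.

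The key lemma I would establish---and the reason the argument avoids the Poincar\'e conjecture---is that \emph{every chamber closure $\overline C$ is a $3$-ball}. By Proposition~\ref{chambers} the boundary $\partial C$ is a union of $2$-cells, and the rotation system arranges these $2$-cells into a closed surface; by Remark~\ref{rem lfl} I may assume $\phi$ locally flat, so that $\overline C$ is a compact $3$-manifold, locally flatly embedded in $\BS^3$, with boundary the closed surface $\partial C$. Simple connectivity of $X$ gives $H^1(\phi(X))=0$, so Alexander duality yields $H_1(C)=H_1(\overline C)=0$; the half-lives-half-dies lemma then forces $\operatorname{rank}H_1(\partial C)=0$, i.e.\ $\partial C$ is a disjoint union of $2$-spheres. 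By the generalised Schoenflies theorem \cite{BrownGS, MazurGS} each component of $\partial C$ bounds a ball in $\BS^3$ on the side away from $C$; these balls are pairwise disjoint and, together with $\overline C$, exhaust $\BS^3$, so a second boundary component would confine $\phi(X)$ to the disjoint balls and disconnect it---contradicting connectedness of $\phi(X)$. Hence $\partial C$ is a single locally flat $2$-sphere, and one more application of generalised Schoenflies shows $\overline C$ is a closed $3$-ball. (The naive alternative of capping $\partial C$ with a ball and recognising the closed result as $\BS^3$ is exactly what would invoke Poincar\'e; the homology-plus-Schoenflies route sidesteps it.)

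Granting the lemma, the equivariant extension is then formal. I would choose, for each chamber $C$, a homeomorphism $\kappa_C:\overline C\to \mathbb{D}^3$ taking $\partial C$ to $\BS^2=\partial\mathbb{D}^3$, thereby presenting $\overline C$ as a cone over $\partial C$, and define
\[
\tilde g|_{\overline C} \ :=\ \kappa_{gC}^{-1}\circ \operatorname{cone}\!\big(\kappa_{gC}\,\hat g\,\kappa_C^{-1}\big|_{\BS^2}\big)\circ \kappa_C ,
\]
where $\operatorname{cone}(\cdot)$ denotes the radial (Alexander) extension of a self-homeomorphism of $\BS^2$ to $\mathbb{D}^3$. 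This restricts to $\hat g$ on $\partial C$ and maps $\overline C$ onto $\overline{gC}$. Since coning is functorial, $\operatorname{cone}(\alpha)\operatorname{cone}(\beta)=\operatorname{cone}(\alpha\beta)$, and $g\mapsto\hat g$ is a homomorphism, the intermediate $\kappa$'s telescope and one checks $\widetilde{g_1g_2}|_{\overline C}=\tilde g_1|_{\overline{g_2C}}\circ\tilde g_2|_{\overline C}$. Finally I would set $\tilde g:=\hat g$ on $\phi(X)$ and $\tilde g|_{\overline C}$ as above on each of the finitely many chambers; these agree on the overlaps $\phi(X)\cap\overline C=\partial C$, so the pasting lemma gives a continuous $\tilde g:\BS^3\to\BS^3$ with continuous inverse $\widetilde{g^{-1}}$, hence a homeomorphism. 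Then $g\mapsto\tilde g$ is a topological action of $\Gamma$ on $\BS^3$ fixing $\phi(X)$ setwise, faithful because $g\mapsto\hat g$ already is, and regular on $\phi(X^0)$ because $\Gamma\act X$ is.

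I expect the main obstacle to be the ball lemma, and in particular proving it without the Poincar\'e conjecture: the homology computation together with the generalised Schoenflies theorem is what replaces the capping-off argument that would otherwise require Perelman's work. A secondary technical point, which I would dispose of at the outset, is ensuring that each $\overline C$ is genuinely a manifold with closed-surface boundary---equivalently, that every link graph $L_X(v)$ embeds in its link sphere with disc faces. This can be guaranteed by first passing to the barycentric subdivision of $X$, which carries along both the embedding $\phi$ and the $\Gamma$-action and only improves the local structure.
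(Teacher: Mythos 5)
There is a genuine gap, and it sits exactly where you declared the problem ``secondary'': the claim that each chamber closure $\overline C$ is a compact $3$-manifold with closed-surface boundary, which you propose to secure by barycentric subdivision. Barycentric subdivision does not change the underlying topological space of $X$, the image $\phi(X)$, or therefore the chambers and their closures; it cannot repair a chamber that pinches. \Lr{lem 1-con} only guarantees that the link graphs of a finite \gcc\ are connected, not $2$-connected, and a cut vertex (or cut edge) in a link graph $L_X(v)$ produces a face of $L_X(v)$ in the link sphere whose boundary walk repeats a vertex, i.e.\ a chamber whose closure is not locally a ball at $v$ (and whose frontier is not a closed surface, so Proposition~\ref{chambers} does not identify it with a single pre-chamber either). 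A degenerate but instructive instance is a \gcc\ with a $2$-cell not separating its neighbourhood, e.g.\ the cyclic group with one $n$-gon: there is a single chamber, its frontier is a disc rather than a sphere, and no homeomorphism $\kappa_C:\overline C\to\mathbb D^3$ carrying $\partial C$ to $\BS^2$ exists, so the coning step has nothing to cone over. Without the manifold-with-boundary property your Alexander-duality/half-lives-half-dies computation has no boundary surface to apply to, and the whole extension collapses. Fixing this requires genuinely enlarging the complex by new cells --- and doing so \emph{equivariantly} while preserving the invariance of the rotation system, which is delicate (this is why the paper triples rather than doubles each $2$-cell in \Lr{lem 2-con}).

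For comparison, the paper's route is to build, equivariantly, a locally $3$-connected super-complex (\Lr{lem 2-con} and \Lr{lem 3-con}) and then invoke the uniqueness-of-embeddings theorem \Tr{thm Whitney3D}, whose proof already establishes that all chambers of such a complex are bounded by locally flat $2$-spheres and performs essentially your Schoenflies-plus-canonical-extension step. Your chamber-by-chamber coning, together with the Alexander duality and half-lives-half-dies argument showing the boundary spheres have genus zero and the connectivity argument forcing a single boundary component, is a sound and genuinely more self-contained way to finish \emph{once} every $\overline C$ is known to be a compact manifold with boundary; it would plausibly get away with local $2$-connectivity where the paper needs $3$-connectivity. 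But as written, the proposal omits the construction that makes that hypothesis true, and the substitute you offer (subdivision) provably cannot supply it.
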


The idea is to reduce this to the following result of \cite{Whitney3D}. We say that a 2-complex $X$ is \defi{locally $k$-connected}, if each of its link graphs is $k$-connected. Recall that a graph is \defi{$k$-connected}, if it has more than $k$ vertices, and remains connected after removing any set of at most $k-1$ vertices.

\begin{theorem}[{\cite[Theorem~1.3]{Whitney3D}}] \label{thm Whitney3D}
Let $Y$ be a finite, simply-connected, locally 3-connected $2$-complex. Then, for every  two locally flat embeddings $\chi,\psi: Y \to \BS^3$, there exists a homeomorphism $\alpha : \BS^3 \to \BS^3$ such that $\psi= \alpha \circ \chi$.

Moreover, we may assume that $\alpha$ is determined by its restriction to $\chi(Y)$.\footnote{The second sentence is not explicitly stated in \cite{Whitney3D}, but it is an immediate consequence of the construction of $\alpha$ given there. Indeed, $\alpha$ is defined by extending  $\psi \circ \chi^{-1}$ from $\chi(Y)$ to all of $\BS^3$ as follows. It is proved that every chamber of $\chi(Y)$ and $\psi(Y)$ is bounded by a homeomorph of $\BS^2$, and the generalised Schoenflies theorem is then applied to map each chamber of $\chi$ to one of $\psi$. Thus the second sentence follows by always choosing the same outcome of the generalised Schoenflies theorem for a given homeomorphism between two copies of $\BS^2$ in $\BS^3$.}
\end{theorem}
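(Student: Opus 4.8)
The plan is to prove this as a three-dimensional analogue of Whitney's classical theorem that a $3$-connected planar graph has an essentially unique embedding in $\BS^2$. The homeomorphism $\alpha$ will be built in two stages: first I would define it on $\chi(Y)$ as the obvious map $\alpha_0 := \psi \circ \chi^{-1}$, and then extend $\alpha_0$ across the chambers of $\chi(Y)$ one at a time. For this to work I need two things: (a) that $\chi$ and $\psi$ induce, up to a global reversal of orientation, the \emph{same} combinatorial data, so that $\alpha_0$ carries the chamber decomposition of $\chi(Y)$ onto that of $\psi(Y)$; and (b) that each individual chamber is a topological $3$-ball whose bounding sphere is nicely (bicollaredly) embedded, so that a boundary homeomorphism can be filled in.

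For (a), the key is local $3$-connectivity. Fix a vertex $v$ of $Y$. Near $v$ both $\chi$ and $\psi$ are \lfl, and hence (as recalled in \Sr{loc flat} and \Sr{sec rot sys}) induce planar rotation systems on the link graph $L_Y(v)$. Since $L_Y(v)$ is $3$-connected, Whitney's theorem gives that its planar rotation system is unique up to reflection; therefore $\sigma(\chi)$ and $\sigma(\psi)$ agree at $v$ up to a single sign $\eta(v)\in\Z_2$. Both images lie in the \emph{oriented} sphere $\BS^3$, so fixing an orientation of $\BS^3$ makes all the local rotations of $\chi$ mutually coherent, and likewise for $\psi$; comparing the two, connectivity of the $1$-skeleton $Y^1$ forces the sign to be global, i.e.\ $\sigma(\chi) = \pm\,\sigma(\psi)$. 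After possibly composing $\psi$ with an orientation-reversing homeomorphism of $\BS^3$, I may assume $\sigma(\chi)=\sigma(\psi)$. Since the pre-chambers are determined purely by the rotation system, $\chi(Y)$ and $\psi(Y)$ then have identical pre-chamber structures, and $\alpha_0$ maps the boundary $\chi(\partial C)$ of each $\chi$-chamber $C$ onto the boundary $\psi(\partial C')$ of the corresponding $\psi$-chamber $C'$.

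For (b), I would argue that each chamber boundary is a $2$-sphere. By Proposition~\ref{chambers}, $\partial C$ is a union of $2$-cells; the pre-chamber structure shows that along each of its $1$-cells exactly two of these $2$-cells meet, so $\partial C$ is a closed surface. The hard part --- and I expect this to be the main obstacle --- is to rule out positive genus, i.e.\ to prove $\partial C \cong \BS^2$. Here I would use the simple connectivity of $Y$ together with the fact that the chambers and $\chi(Y)$ reassemble $\BS^3$: a \lfl\ closed surface in $\BS^3$ is two-sided and separates, and an Euler-characteristic and homological count over the whole chamber decomposition, using $\pi_1 Y = 0$, should force every $\partial C$ to have genus $0$. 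Granting this, local flatness makes each $\partial C$ bicollared, so by the generalised Schoenflies theorem each closed chamber $\cls{C}$ is a $3$-ball.

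It then remains to assemble $\alpha$. On $\chi(Y)$ set $\alpha=\alpha_0$. For each matched pair $(C,C')$, the map $\alpha_0$ restricts to a homeomorphism of the boundary spheres $\partial C \to \partial C'$; since both $\cls{C}$ and $\cls{C'}$ are $3$-balls, this extends to a homeomorphism $\cls{C}\to\cls{C'}$ (for instance by putting each ball in Schoenflies normal form and then coning the boundary map over the interior via the Alexander trick). Gluing these extensions along $\chi(Y)$ yields a well-defined homeomorphism $\alpha:\BS^3\to\BS^3$ with $\psi=\alpha\circ\chi$; continuity across $\chi(Y)$ follows because the pieces agree there with $\alpha_0$ and meet $\chi(Y)$ in a bicollared way. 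Finally, the ``moreover'' clause is immediate: once a fixed rule is chosen for the extension of a boundary homeomorphism between two balls, the extension over each chamber depends only on $\alpha_0=\psi\circ\chi^{-1}$, hence only on the restriction of $\alpha$ to $\chi(Y)$.
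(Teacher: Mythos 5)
Your plan reproduces the architecture of the proof in \cite{Whitney3D}, as summarised in the footnote to the statement: define $\alpha_0=\psi\circ\chi^{-1}$, prove that every chamber of either embedding is bounded by a homeomorph of $\BS^2$, extend $\alpha_0$ over each chamber by the generalised Schoenflies theorem, and obtain the `moreover' clause by fixing once and for all which extension is used for a given boundary homeomorphism. Your step (a) --- aligning $\sigma(\chi)$ and $\sigma(\psi)$ via Whitney's uniqueness theorem applied to the $3$-connected link graphs, and propagating the resulting sign along the connected $1$-skeleton --- is exactly the role that local $3$-connectivity plays there, and your handling of the Alexander-trick extension and of the `moreover' clause matches the footnote's explanation.

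The genuine gap is the one you flag yourself in step (b), and it is the crux of the theorem, so ``should force genus $0$'' cannot stand as written. Two things are missing. First, your argument that $\partial C$ is a closed surface only inspects $1$-cells; manifoldness can also fail at a $0$-cell $v$, where the local structure of $\partial C$ corresponds to a face of the embedded link graph $L_Y(v)$, so you need that every face of a $2$-connected plane graph is bounded by a cycle in order to get a disc link at $v$. You must moreover rule out that a chamber lies on both sides of one of its boundary $2$-cells (as happens, e.g., for the complement of a single disc in $\BS^3$), since otherwise $\cls{C}$ is not a compact manifold with boundary and Schoenflies does not apply. Second, for genus $0$ a bare Euler-characteristic count is circular: it presupposes knowledge of the topology of the closed chambers, which is what is being determined. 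The workable route is homological: $\pi_1(Y)=0$ gives $H^1(\chi(Y))=0$, Alexander duality yields $\tilde H_1(\BS^3\setminus \chi(Y))=0$, hence $H_1(\cls{C};\Q)=0$ for every chamber; the half-lives-half-dies inequality for the compact $3$-manifold $\cls{C}$ then bounds the total genus of $\partial\cls{C}$ by $\operatorname{rk} H_1(\cls{C};\Q)=0$, and a further duality argument using the connectedness of $\chi(Y)$ excludes more than one boundary sphere, after which local flatness supplies the bicollar that the generalised Schoenflies theorem requires. With these two points supplied, your proof goes through and coincides with the cited one.
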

% which states that if a finite, simply connected, locally $3$-connected 2-complex $X$ embeds in $S^3$, then the embedding is unique up to homeomorphisms of $S^3$. 
Every finite Cayley complex automatically satisfies the simple connectedness condition, and it is locally 1-connected (\Lr{lem 1-con}), but it is not necessarily locally $3$-connected. With the lemmas that follow we will be able to increase the local connectedness of a complex $X$ as in \Tr{complex to action} by extending it to a super-complex. This super-complex $X'$ will inherit the canonical action of $\Gamma$, and its rotation system will still be $\Gamma$-invariant. This will allow us to apply \Tr{thm Whitney3D} to $X'$ to prove \Tr{complex to action}.

\bigskip
We start with the following basic fact about finite Cayley complexes. 

\begin{lemma}\label{lem 1-con}
Every finite generalised Cayley complex $X$ is locally 1-connected,  unless $X$ has fewer than 3 vertices.
\end{lemma}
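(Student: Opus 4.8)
The plan is to reduce everything to the \emph{connectivity} of a single link graph. Since the action $\Gam \act X$ is regular on $X^0$, it is in particular transitive on vertices, and each $g\in\Gam$ induces an isomorphism $L_X(v)\to L_X(gv)$; hence all link graphs are isomorphic and it suffices to treat one vertex $v$. That $L_X(v)$ has more than one vertex is the easy part: transitivity forces every vertex of $X^1$ to have the same degree $d$, and a connected (multi)graph on at least $3$ vertices cannot be $d$-regular with $d\le 1$, so $d\ge 2$ and $L_X(v)$ has at least $d\ge 2$ vertices. This is exactly where the hypothesis of at least $3$ vertices enters: a single edge gives links with one vertex. So the whole content is to prove that $L_X(v)$ is \emph{connected}.

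Suppose, for contradiction, that $L_X(v)$ is disconnected; by the previous remark this then holds at \emph{every} vertex. I would first record the local fact that, because $X$ is \sico, the number of connected components of $L_X(v)$ equals the number of connected components of $X\sm\{v\}$, so every vertex is a cut vertex. This can be seen by a Mayer--Vietoris comparison of $X$ with a cone neighbourhood of $v$ (whose deleted neighbourhood is homotopy equivalent to $L_X(v)$) and with $X\sm\{v\}$, using $H_1(X)=0$; equivalently, by ``splitting'' $v$ into one vertex per link-component. Carrying out this splitting at every vertex \emph{simultaneously} and equivariantly produces a $2$-complex $\tilde X$ with an equivariant projection $p:\tilde X\to X$ that is a homeomorphism off the $0$-skeleton and in which every vertex now has a connected link. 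Call the connected components of $\tilde X$ the \emph{pieces}, and let $\mathcal{T}$ be the bipartite \emph{incidence graph} whose nodes are the pieces and the vertices of $X$, with one edge joining a piece to a vertex for each copy of that vertex contained in the piece.

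The key step is to turn simple connectivity into a statement about $\mathcal{T}$. Since $X$ is recovered from the disjoint pieces by identifying, for each vertex $v$, its copies to a single point, \vKT\ gives $\pi_1(X)\isom\big(\ast_{P}\pi_1(P)\big)\ast F_{b}$, where $b$ is the first Betti number of $\mathcal{T}$; hence $X$ \sico\ forces every piece to be \sico\ and, crucially, $\mathcal{T}$ to be a \emph{tree}. Now the group action enters decisively: $\Gam$ acts on the tree $\mathcal{T}$ preserving its bipartition, so by the classical fact that a finite group acting on a tree fixes a vertex or inverts an edge---and inversions are impossible here, as they would swap the two sides of the bipartition---$\Gam$ fixes some node of $\mathcal{T}$. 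It cannot fix a node coming from $X^0$, since the action on $X^0$ is free and $|\Gam|>1$; therefore $\Gam$ fixes a \emph{piece} $P$. By transitivity on $X^0$, the $\Gam$-invariant (and non-empty) set of vertices having a copy in $P$ is all of $X^0$, i.e.\ $P$ meets every vertex.

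To finish, I would produce a cycle in $\mathcal{T}$, contradicting that it is a tree. Because the links are disconnected, $P\neq\tilde X$, and each vertex has a copy outside $P$ whose link is non-empty; tracing a $2$-cell incident to such a copy down to one of its edges, there is an edge $e=vw$ of $X$ whose copy lies in a piece $P_e\neq P$. Then $P$ and $P_e$ each contain a copy of $v$ and a copy of $w$, giving the $4$-cycle $v\,P\,w\,P_e$ in $\mathcal{T}$ and the desired contradiction. The step I expect to be the genuine obstacle is precisely this use of the group action through the tree fixed-point theorem: local information alone is \emph{not} enough, as the wedge of two discs along a single vertex is \sico\ yet has a disconnected link there, and it is only vertex-transitivity---forcing every vertex to be a cut vertex at once---together with finiteness that yields a contradiction. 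Minor care is also needed with the \vKT\ bookkeeping and with loops or multiple edges, but a repeated copy of a vertex inside one piece already gives a multiple edge, hence a cycle, in $\mathcal{T}$, so these degenerate cases only make the contradiction easier and can be absorbed by first passing to the barycentric subdivision if desired.
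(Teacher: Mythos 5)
Your proof is correct, but it takes a genuinely different route from the paper's. The paper argues locally at a single vertex: the 1-skeleton is a finite connected vertex-transitive graph on at least $3$ vertices, hence $2$-connected, so any two edges at the identity vertex $o$ lie on a common cycle, and a van Kampen diagram filling that cycle yields a path between them in $L_X(o)$. You argue globally: split every vertex along its link components, use van Kampen to show that simple connectivity forces the piece/vertex incidence graph $\mathcal{T}$ to be a tree with simply connected pieces, and then play the fixed-point theorem for finite group actions on trees against freeness and transitivity on $X^0$ to manufacture a cycle in $\mathcal{T}$. Your route avoids the (standard but unproved in the paper) fact that finite connected vertex-transitive graphs have no cut vertices, at the price of the tree fixed-point theorem and a graph-of-spaces computation; the paper's argument is shorter and never leaves the link of one vertex. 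Your diagnosis of where transitivity and finiteness enter is accurate --- the $3$-regular tree, a generalised Cayley complex of $\Z_2 * \Z_2 * \Z_2$ with no $2$-cells, shows the statement fails for infinite complexes, consistently with where your argument breaks down.

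The one soft spot is your closing remark about loops. If the 1-skeleton were allowed loops, the statement itself would be false: take $\Gamma=\Z_3$, let $X^1$ be a triangle with one loop at each vertex, and attach one $2$-cell along the triangle and one along each loop. This is finite, simply connected, vertex-regular, and every link is disconnected. So the loop case cannot be ``absorbed by barycentric subdivision'' --- and in any case subdividing destroys regularity of the action on the $0$-skeleton, which your fixed-point and transitivity steps rely on. The statement is safe only because, per the paper's conventions, the 1-skeleton of a generalised Cayley complex is a (simple, in particular loopless) generalised Cayley graph; the paper's own proof tacitly uses this too, since a loop lies on no cycle through a second edge. Under that convention your $4$-cycle $v\,P\,w\,P_e$ is always non-degenerate and your argument is complete; multiple edges, as you note, cause no trouble.
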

This is well-known (see e.g.\ \cite[Lemma 5.1]{CarEmbII}), but we provide a proof for completeness:
%%%%%%%%%%%%%%%%%%%%%%%%
\begin{proof}
Recall that the 1-skeleton $X^1$ of $X$ is a generalised Cayley graph, and so $X^1$ is 2-connected as it is finite. Thus any two incident edges of $X^1$ are contained in a cycle.

%Suppose, for a contradiction, that the link graph $L=L(o)$ of the identity element $o$ of $X$ is disconnected, and let  $A,B$ be two distinct components of $L$. 
To prove that the link graph $L=L(o)$ of the identity element $o$ of $X$ is connected, pick two edges $e,f$ of $X$ incident with $o$. By the above remark, there is a cycle $C$ in $X^1$ containing both $e,f$. Consider a van Kampen diagram $K$ proving that $C$ is null-homotopic in $X$ using the 2-cells of $X$. Let $K_o$ denote the set of 2-cells appearing in $K$ that contain $o$. These 2-cells yield an \pth{e}{f}\ in $L$. Since $e,f$ where arbitrary edges of $o$, this proves that $L$, and hence every link graph of $X$, is connected. %, contradicting the fact that $e,f$ lie in  distinct components of $L$.
\end{proof}

Next, we show how to increase the local connectivity of an embedded 2-complex from 1 to 2 by passing to a super-complex:

\begin{lemma}\label{lem 2-con}
Let $X$ be a locally 1-connected $2$-complex, and let $\phi: X \to \BS^3$ be a \lfl\ embedding. Then \ti\ a locally $2$-connected $2$-complex $X'$ containing $X$ as a topological subspace, and a \lfl\ embedding $\phi': X' \to \BS^3$, \st\ $\phi'(X)= \phi(X)$.

Moreover, any action $\Gamma \act X$  \wrt\ which $\sigma(\phi)$ is invariant extends to an action $\Gamma \act X'$  \wrt\ which $\sigma(\phi')$ is invariant.

Furthermore, $\pi_1(X')\isom \pi_1(X)$.
\end{lemma}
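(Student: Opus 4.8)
The plan is to build $X'$ from $X$ by keeping the entire $1$-skeleton and adding only new $2$-cells, each attached along a \emph{null-homotopic} closed walk in $X^1$; this way $\pi_1$ is automatically untouched and the vertex set (hence the regular $\Gamma$-action on vertices) is preserved verbatim, and all that remains is to choose the new cells so they repair the failures of $2$-connectivity in the link graphs. By Remark~\ref{rem lfl} I would first assume $\phi$ is PL (a sufficiently close PL approximation leaves the combinatorial datum $\sigma(\phi)$, and hence its invariance, unchanged), so that around each $\phi(v)$ a small sphere $S_v$ meets $\phi(X)$ in a PL graph isomorphic to $L_X(v)$, whose faces are the germs at $v$ of the chambers of $\phi$. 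Since $X$ is locally $1$-connected each $L_X(v)$ is connected, so it fails to be $2$-connected exactly when it has a cut vertex $c$, equivalently when some face $\Phi$ of $S_v\setminus L_X(v)$ has a facial walk that is not a simple cycle (one that visits $c$ twice).

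The key move is \emph{fan doubling}. Fix such a face $\Phi$, incident to the chamber $C$ whose germ it is, and let $a,b$ be the two neighbours of $c$ in $L_X(v)$ that are consecutive with $c$ across $\Phi$ and lie in different components of $L_X(v)-c$. The $2$-cells of $X$ whose corners at $v$ lie on the arc of $\partial\Phi$ running from $a$ through $c$ to $b$ form a consecutive fan $f_1,\dots,f_k$ in the rotation around $v$; as this is a proper sub-walk, their union $D:=f_1\cup\dots\cup f_k$ is an embedded disk with $v\in\partial D$ and $\partial D=e_1\cdot P\cdot e_2^{-1}$, where $e_1,e_2\in X^1$ are the edges at $v$ represented by $a,b$ and $P$ is a path in $X^1$. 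On the $C$-side of $D$ no further cells of $\phi(X)$ intrude over the interior of $D$, so a bicollar lets me attach into $C$ a parallel copy $f$ of $D$ with the same boundary $\partial f=\partial D$. This $f$ has a corner at $v$ between $e_1$ and $e_2$, i.e. it adds to $L_X(v)$ precisely the arc $a$–$b$ across $\Phi$ that removes $c$ as a cut vertex. I would perform this (in a single round) for every non-simple facial walk of every link graph, turning all facial walks into simple cycles and thereby making every $L_X(v)$ $2$-connected.

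The remaining clauses then follow cheaply. Setting $\phi':=\phi$ on $X$ and sending each new cell to its parallel copy gives an embedding with $\phi'(X)=\phi(X)$, and since everything is PL the embedding $\phi'$ is locally flat. Each added $2$-cell is glued along $\partial D$, which bounds the disk $D\subseteq X$ and is therefore null-homotopic in $X$; as we introduce no new $1$-cells, attaching $2$-cells along null-homotopic loops leaves the fundamental group unchanged, whence $\pi_1(X')\cong\pi_1(X)$. For equivariance I would observe that the set of defects — the cut vertices and non-simple facial walks — is determined by the combinatorial data together with $\sigma(\phi)$, which is $\Gamma$-invariant, so defects, fans $D$ and chambers $C$ all fall into $\Gamma$-orbits; doubling orbit-wise yields a $\Gamma$-invariant family of new cells and extends the action by $g\cdot f_{(v,\Phi)}:=f_{(gv,\,g\Phi)}$, consistently because the action is free on vertices. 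Finally $\sigma(\phi')$ extends $\sigma(\phi)$ by inserting each new cell immediately beside its fan on the $C$-side of the cyclic order at each edge of $\partial D$, an insertion dictated by the same $\Gamma$-invariant data, so $\sigma(\phi')$ is invariant with respect to the same $\eta$.

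I expect the main obstacle to be the bookkeeping confirming that doubling all fans simultaneously really makes every link $2$-connected, rather than trading old defects for new ones: each new cell $f$ traverses the vertices of $P$ and so creates fresh corners, hence new faces, in their link graphs. The saving dichotomy is that along $P$ a corner of $f$ either duplicates an existing corner of $D$ (producing only a bigon face, which is a simple cycle) or directly bypasses a spoke vertex (which can only help connectivity), so no new non-simple facial walk appears and one equivariant round suffices. Making this dichotomy precise, together with checking that the disks $D$ are genuinely embedded when $X$ is not assumed simplicial, is where the real care lies; if needed, passing to the barycentric subdivision first removes the embeddedness worry at the cost of a few extra links to repair.
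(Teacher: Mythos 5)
Your construction is genuinely different from the paper's. You diagnose the failures of $2$-connectivity (cut vertices of link graphs, detected as non-simple facial walks) and repair each one locally by attaching a single new $2$-cell, with no new $1$-cells, along the boundary of a fan of existing $2$-cells. The paper instead fattens \emph{uniformly}: it triples every $2$-cell and engulfs every $1$-cell in a small $2$-sphere, so that every link graph $L_X(v)$ is replaced wholesale by the graph $L_X(v)^\otimes$ of Definition~\ref{otimes}, which is $2$-connected by Lemma~\ref{lem G 2-con} no matter where the cut vertices were. The uniform construction costs more cells (and changes the link graphs' vertex sets), but it needs no analysis of defects and makes the equivariance clause almost automatic.

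As written, your argument has a genuine gap at its combinatorial core. Adding one chord $ab$ per face with a non-simple boundary walk does not in general destroy the cut vertex $c$, and certainly does not ``turn all facial walks into simple cycles'' in one round: if $L_X(v)-c$ has components $K_1,\dots,K_m$ with $m\ge 3$, the edges at $c$ are grouped by component in the rotation at $c$, and all $m$ ``transition corners'' can lie on a \emph{single} face --- take three triangles sharing the vertex $c$ (realized, e.g., by three spheres in $\BS^3$ glued along a common edge at $v$); the outer face visits $c$ three times, and one chord across it bridges only one transition, leaving $c$ a cut vertex. The repair must be one chord per transition corner (predecessor and successor in different components of $L_X(v)-c$), not per face; that version does work in a single simultaneous round, because adding edges to a graph with fixed vertex set never creates a new cut vertex --- but this is precisely the bookkeeping you defer, and it is where the proof lives. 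Two further points need real care: (i) the fan $D$ need not be an embedded disc with a free side in $C$, and distinct fans pushed into the same chamber must be kept pairwise disjoint --- barycentric subdivision alone settles neither; (ii) your equivariance argument invokes freeness of $\Gamma \act X$ on vertices, which is not among the lemma's hypotheses (it holds in the application to generalised Cayley complexes, but the lemma is stated for arbitrary actions with invariant rotation system, and then a fan or chamber can have a non-trivial stabiliser --- exactly the situation the paper's proof must handle when it chooses how a stabiliser of $f$ permutes $f^-$ and $f^+$).
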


Before giving the formal proof, let us explain the intuition by going one dimension down. Recall that a \defi{plane graph} is a 1-complex embedded in $\BS^2$ or $\R^2$. Given a connected plane graph \g that has some cut-vertices, it is easy to extend \g into a plane super-graph $G^\otimes$ that is 2-connected by `fattening' it, i.e.\ adding new vertices and edges near each face-boundary; see \fig{figfat} and \Dr{otimes} for details. To prove \Lr{lem 2-con}, we will add new 2-cells to $X$ to `fatten' it in such a way that the effect on each of its link graphs will be the same as the above modification of \g into $G^\otimes$:

\begin{definition} \label{otimes}
Let $G$ be a finite, connected, plane graph. Let $G''$ be a plane multigraph obtained from $G$ by adding two  parallel edges $e',e''$ to each edge $e\in E(G)$, and embedding them so that the circle $e' \cup e''$ separates $e$ from the rest of $G$. Then, for each  $e$ of $G''$ with end-vertices $u,v$, subdivide $e$ into a path of length 3 by placing two new vertices $e_u,e_v$ inside $e$. Finally, for each vertex $v$ of $G''$, and each two edges $e,f$ incident with $v$ that appear consecutively in the plane, add an edge between $e_v$ and $f_v$. We embed these edges in such a way that they form a circle separating $v$ from any other vertex of $G$. Let $G^\otimes$ denote the resulting plane graph; see \fig{figfat}. 
\end{definition}

\begin{figure} 
\begin{center}
\includegraphics[width=0.9\linewidth]{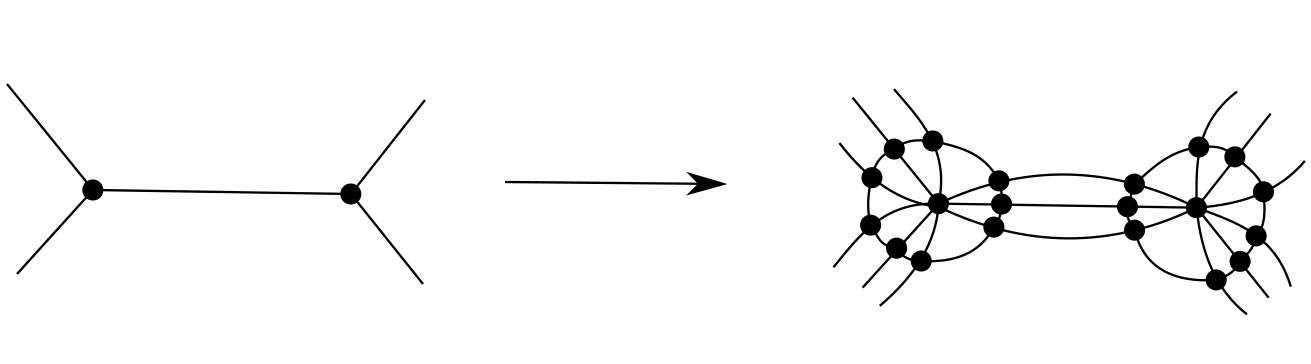} 
\end{center}
\caption{A portion of a graph $G$ (left half), and the corresponding part of $G^\otimes$ (right half).} \label{figfat}
\end{figure}

We remark that $G$ is a topological minor of $G^\otimes$.

\begin{lemma}\label{lem G 2-con}
Let $G$ be a connected, plane graph. Then $G^\otimes$  is $2$-connected.
\end{lemma}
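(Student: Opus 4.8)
The plan is to exhibit $G^\otimes$ as a union of $2$-connected ``building blocks'' and glue them together using the elementary fact that if $H_1,H_2$ are $2$-connected graphs with $|V(H_1)\cap V(H_2)|\ge 2$, then $H_1\cup H_2$ is $2$-connected. This fact is immediate: the union is connected and has at least three vertices, and deleting any vertex $x$ leaves the two (possibly punctured) pieces connected and still sharing a vertex, so their union stays connected. The connectivity of $G$ will then let me order the blocks so that each newly adjoined block meets the current union in at least two vertices.

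I would use two families of blocks. First, for each vertex $v$ of $G$, the ring added around $v$ in \Dr{otimes} is a cycle $R_v$ on the vertex set $\{f_v : f\in E(G'')\text{ incident with }v\}$; since $\deg_{G''}(v)=3\deg_G(v)\ge 3$ whenever $v$ has a neighbour, $R_v$ is a genuine cycle, hence $2$-connected. Second, for each edge $e=uv$ of $G$ the three parallel copies $e,e',e''$ in $G''$, after subdivision, become three internally disjoint $u$--$v$ paths $u\,e_u\,e_v\,v$, $u\,e'_u\,e'_v\,v$ and $u\,e''_u\,e''_v\,v$; together they form a theta graph $B_e$, which is $2$-connected. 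One checks directly that every vertex and every edge of $G^\otimes$ lies in some $R_v$ or some $B_e$, so $G^\otimes=\bigcup_v R_v\cup\bigcup_e B_e$.

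The key point is that the overlaps are large: for an edge $e=uv$ we have $R_u\cap B_e\supseteq\{e_u,e'_u,e''_u\}$ and $R_v\cap B_e\supseteq\{e_v,e'_v,e''_v\}$, each of size $3\ge 2$. I would then fix a spanning tree of the connected graph $G$ and build $G^\otimes$ incrementally: start from a single ring $R_{v_0}$; processing the tree edges in breadth-first order, for each tree edge $e=uv$ with $u$ already reached, first adjoin $B_e$ (meeting the current union in the $\ge 2$ vertices $\{e_u,e'_u,e''_u\}\subseteq R_u$), then adjoin $R_v$ (meeting it in $\{e_v,e'_v,e''_v\}\subseteq B_e$). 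Finally adjoin $B_e$ for every non-tree edge $e=uv$; both endpoints' rings are already present, so again the overlap has size $\ge 2$. By the gluing fact each stage is $2$-connected, and the final union is all of $G^\otimes$.

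The only genuinely load-bearing point is that each block meets the current union in at least \emph{two} vertices, and this is exactly what the doubling of edges in the construction buys us: had we subdivided a single copy of each edge, a ring $R_u$ would meet the subdivided edge in the single vertex $e_u$, creating a cut vertex. The two extra parallel copies push the overlap up to three vertices, which is what makes the induction go through. I expect no real obstacle beyond this bookkeeping; the only case needing separate mention is the degenerate one where $G$ has no edges (a single vertex), which is harmlessly excluded once $G$ is assumed non-trivial, consistent with its role as a link graph in \Lr{lem 2-con}.
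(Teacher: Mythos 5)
Your proof is correct and complete. The decomposition of $G^\otimes$ into the vertex-rings $R_v$ (cycles on $3\deg_G(v)\ge 3$ vertices) and the edge-blocks $B_e$ (theta graphs on the three subdivided parallel copies of $e$) does cover every vertex and edge of $G^\otimes$, the pairwise overlaps $R_u\cap B_e=\{e_u,e'_u,e''_u\}$ have size $3\ge 2$, and the standard gluing fact together with a breadth-first ordering along a spanning tree of $G$ makes the induction close. You are also right to flag the degenerate single-vertex case, where the statement is vacuously problematic under the paper's convention that a $2$-connected graph must have more than two vertices; in the intended application the link graphs are non-trivial, so this is harmless.

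The paper's own proof is essentially a one-line assertion: it says the claim ``boils down to checking that neither an original vertex of $G$, nor one of the new vertices, can be a cut-vertex of $G^\otimes$,'' i.e.\ a direct case check against the definition of $2$-connectivity. Your route is genuinely different in structure: rather than inspecting candidate cut-vertices one type at a time, you exhibit $G^\otimes$ as an amalgam of small $2$-connected pieces glued along sets of at least two vertices. What this buys you is a proof that localises all the work into one reusable lemma and makes visible exactly why the construction works --- as you note, the tripling of edges is precisely what forces the overlaps between a ring and an edge-block to have size three rather than one, which is the only load-bearing feature. The paper's direct check is shorter to state but leaves that mechanism implicit. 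Either argument is acceptable; yours is the more transparent and the more robust if the construction were ever varied.
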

%%%%%%%%%%%%%%%%%%%%%%%%
\begin{proof}
This is straightforward, and boils down to checking that neither an original vertex of \G, nor one of the new vertices, can be a cut-vertex of $G^\otimes$.
\end{proof}

Using this we can now prove \Lr{lem 2-con}.
%%%%%%%%%%%%%%%%%%%%%%%%
\begin{proof}[Proof of \Lr{lem 2-con}]
We may assume that $X$ is regular, for otherwise we can work with its barycentric subdivision, which preserves all assumptions we have made on $X$ as well as $\pi_1(X)$.

We begin the construction of $X'$ by `tripling' each 2-cell of $X$ as follows. For every $f\in X^2$, we introduce two new 2-cells $f^-,f^+$ with the same boundary and attaching map as $f$, and embed $f^-,f^+$ in $\BS^3$ \lfl\ and in such a way that their images bound a 3-ball that contains $f$ and is otherwise disjoint from $X$. Let $X^\pm$ be the resulting $2$-complex, and $\phi^\pm: X^\pm \to \BS^3$ the resulting \lfl\ embedding.

Next, we modify $X^\pm$ into $X'$ by engulfing each 1-cell $e\in X^1$ inside a copy of $\BS^2$. To make this more precise, we pick a \lfl\ homeomorph $S_e$ of $\BS^2$ in $\BS^3$, \st\ $S_e$ intersects $X^0$ at the endpoints of $e$, it intersects each 2-cell $f$ incident with $e$ along an arc, which we denote by $f_e$, and $S_e$ is otherwise disjoint from $X^\pm$ and all other $S_{e'}, e'\neq e$. It is easy to find such an $S_e$ inside a small neighbourhood of $e$. 

To turn the resulting subspace of $\BS^3$ into a 2-complex, we declare $f_e$ to be an 1-cell \fe\ pair $e,f$ as above, and we replace $f$ by the two 2-cells into which $f_e$ dissects it (one of which  2-cells will be further dissected by the other edges incident with $f$). 
Moreover, for every two 2-cells $f,g$ that are consecutive in the rotation system of $e$, the 1-cells  $f_e, g_e$ cut a `slice' of $S_e$, which we also declare to be a 2-cell of $X'$, and denote it by $s_{feg}$. This completes the construction of $X'$, and the PL embedding $\phi'$. As $X$ is a subspace of $X'$, we have $\phi'(X)= \phi(X)$. Notice that $X'$ has the same vertex set as $X$, and so to check that $X'$ is locally $2$-connected we just need to consider the effect of the newly added cells to each link graph $L_X(v), v \in X^0$. It is straightforward to check that the new link graph $L_{X'}(v)$ coincides with  $L_X(v)^\otimes$ as in \Dr{otimes} below. Thus $X'$  is locally $2$-connected by \Lr{lem G 2-con} below.

\medskip
For the second statement, we first extend the action $\Gamma \act X$ to $X^\pm$ as follows. \Fe\ $\gamma\in \Gamma$ and $f\in X^2$, we let $\gamma$  map the new 2-cells  $f^-,f^+$ bijectively to $(\gamma f)^-,(\gamma f)^+$. There are two ways to do so, and we choose the unique option that retains the invariance of the rotation system $\sigma:=\sigma(\phi)$, i.e.\ the choice that ensures that $\gamma \sigma_d = (-1)^{\eta(\gamma)} \sigma_{\gamma d}$ for some, hence every, directed edge $d$ incident with $f$, where $\eta: \Gamma \to \Z_2$ is a homomorphism as in \eqref{def cov}, witnessing the fact that $\sigma$ is  invariant. This ensures that the rotation system of $\phi^\pm$ is invariant \wrt\ the resulting action $\Gamma \act X^\pm$. Extending further to $\Gamma \act X'$ is straightforward: we just let $\gamma\in \Gamma$ map each $s_{feg}$ to $s_{(\gamma f)(\gamma e)(\gamma g)}$.

\medskip
Finally, it is easy to prove $\pi_1(X')\isom \pi_1(X^\pm)\isom \pi_1(X)$ by applying \vKT\ whenever a new 2-cell is introduced, using the fact that every new 2-cell forms a copy of $\BS^2$ with existing 2-cells.
\end{proof}

{\bf Remark:} \Lr{lem G 2-con} would remain true if instead of tripling each edge we just doubled it. The reason we triple is that in \Lr{lem 2-con} we have to triple each 2-cell in order to maintain the invariance of the action.

\medskip
Next, we observe that we can increase the local connectivity further from 2 to 3, using a construction of \cite{Whitney3D}. It was shown in \cite[\S 6]{Whitney3D} how given a locally 2-connected, simplicial, $2$-complex $X$, and an embedding $\phi: X \to \BS^3$, one can construct a super-complex ${\rm fat}(X)={\rm fat}(X,\phi)$ with improved properties:

\begin{lemma}[{\cite[Lemma~6.3]{Whitney3D}}] \label{lem 3-con}
Suppose that $X$ is a locally 2-connected, simplicial, $2$-complex,  and $\phi: X \to \BS^3$ is a locally flat embedding. Then ${\rm fat}(X)$ is locally $3$-connected. 
\end{lemma}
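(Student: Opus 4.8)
The plan is to verify the defining property directly: show that every link graph $L_{{\rm fat}(X)}(w)$, as $w$ ranges over the vertices of ${\rm fat}(X)$, is $3$-connected. Since ${\rm fat}(X)$ is built from $X$ by a purely local fattening procedure near each cell (following \cite[\S 6]{Whitney3D}), the vertices of ${\rm fat}(X)$ split into two kinds: the original vertices of $X$, and the auxiliary vertices created by the fattening (those lying in the interior of an edge or a $2$-cell, or on the small spheres engulfing each vertex). The link graph at an auxiliary vertex is a small, explicitly described plane graph --- typically a wheel or a prism-like graph arising from the local gadget --- whose $3$-connectivity can be checked by inspection, exactly as \Lr{lem G 2-con} was verified by inspection. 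So the substance of the argument is concentrated at the original vertices.

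At an original vertex $v$, the hypothesis that $X$ is locally $2$-connected means $L_X(v)$ is a $2$-connected graph, and since $\phi$ is \lfl, this link graph is realised as a plane graph via the planar rotation system that $\phi$ induces at $v$ (\Sr{sec rot sys}). The fattening replaces the local picture at $v$ by the intersection of ${\rm fat}(X)$ with a small sphere around $v$, and the key computation is to identify the resulting link graph $L_{{\rm fat}(X)}(v)$ as the image of $L_X(v)$ under a fixed planar ``fattening operator'' $H \mapsto H^{\mathrm{fat}}$, analogous to the operator $G \mapsto G^{\otimes}$ of \Dr{otimes} but designed to raise connectivity from $2$ to $3$ rather than from $1$ to $2$. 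The statement then reduces to the graph-theoretic claim that $H^{\mathrm{fat}}$ is $3$-connected whenever $H$ is a $2$-connected plane graph.

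I expect this last graph-theoretic claim to be the main obstacle. Proving it amounts to ruling out every $2$-vertex cut of $H^{\mathrm{fat}}$, which requires a case analysis over the locations of the two removed vertices (both inherited from $H$, both auxiliary, or mixed), and in each case exploiting the $2$-connectivity of $H$ --- in particular the fact that any two vertices of $H$ lie on a common cycle --- to exhibit a surviving path between any two putative components. The planarity of $H$ (equivalently, the fixed cyclic orderings at its vertices, which the fattening gadget respects) is what makes the operator well-defined and forces the new short cycles introduced around vertices and faces to reconnect the graph after deleting any two vertices. A secondary point, needed to keep the induction of Section~\ref{sec iii to iv} running, is that the fattening preserves simple-connectedness and the $\Gamma$-invariance of the rotation system; but these follow by the same \vKT\ and equivariance bookkeeping used in the proof of \Lr{lem 2-con}, so the genuinely new content is the connectivity bound, and that is where I would concentrate the effort.
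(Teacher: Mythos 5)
First, note that the paper does not prove this lemma at all: it is imported verbatim as \cite[Lemma~6.3]{Whitney3D}, together with the construction of ${\rm fat}(X)$ itself, which is defined in \cite[\S 6]{Whitney3D} and never reproduced here. So there is no internal proof to compare your attempt against; any proof has to engage with the actual definition of ${\rm fat}(\cdot)$ in that reference.

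Your proposal is a reasonable roadmap, but it has a genuine gap in two places. The decisive one is that the central graph-theoretic claim --- that the planar ``fattening operator'' $H \mapsto H^{\mathrm{fat}}$ sends $2$-connected plane graphs to $3$-connected graphs --- is only announced, not proved: you write that you expect it to be the main obstacle and describe the shape of the case analysis, but the entire content of the lemma lives in exactly that claim, so as it stands nothing has been established. The second problem is that the reduction to that claim presupposes structural features of ${\rm fat}(X)$ that you have not verified and that may well be false for the actual construction: you assume the original vertices of $X$ survive as vertices of ${\rm fat}(X)$ and that their links are obtained from $L_X(v)$ by a fixed local operator. Compare the analogous construction $F(X)$ of \Dr{fruit cx} in \Sr{sec i to iv} of this paper: there the original vertices are blown up into pineapples and are \emph{not} vertices of the new complex --- every vertex of $F(X)$ is what you call ``auxiliary'', and its link is \emph{not} ``a small wheel checkable by inspection'' but a graph whose structure still encodes the local combinatorics of $X$ near the destroyed cell (cf.\ Remark~\ref{rem pine}). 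If ${\rm fat}(X)$ behaves similarly, your split into ``easy auxiliary vertices'' and ``original vertices handled by the operator'' collapses, and the $3$-connectivity of the links at the new vertices would itself require the $2$-connectivity of the nearby links of $X$ as input. Finally, the last paragraph about preserving simple-connectedness and $\Gamma$-invariance addresses statements that in the paper are separate remarks following \Lr{lem 3-con}, not part of the lemma, so it does not repair the gap above.
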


Moreover,  $\phi$ extends to a \lfl\ embedding $\phi': {\rm fat}(X) \to \BS^3$ (\cite[Lemma~6.1]{Whitney3D}). As the construction of  ${\rm fat}(X)$ is canonical, any group action $\Gamma \act X$ extends to an action $\Gamma \act {\rm fat}(X)$. %, and if the rotation system of $X$ \wrt\ $\phi$ is $\Gamma$-invariant, then so is the rotation system of ${\rm fat}(X)$ \wrt\ $\phi'$. 
Finally, any loop in ${\rm fat}(X)$ is homotopic to a loop in $X$ by the construction, and so ${\rm fat}(X)$ is simply connected if $X$ is.

\medskip
We now have all the ingredients needed for the main result of this section:

%%%%%%%%%%%%%%%%%%%%%%%%
\begin{proof}[Proof of \Tr{complex to action}]
By Remark~\ref{rem lfl}, we may assume \obda\ that $\phi$ is \lfl. By \Lr{lem 1-con}, $X$ satisfies the conditions of \Lr{lem 2-con}, and we let $X'$ be the locally 2-connected 2-complex provided by the latter, and $\phi': X' \to \BS^3$ the corresponding \lfl\ embedding (the case where $X$ has fewer than 3 vertices is trivial). Since the rotation system of $\phi$ is $\Gamma$-invariant by assumption, the second sentence of \Lr{lem 2-con} yields an action $\Gamma \act X'$ \wrt\ which $\sigma(\phi')$ is $\Gamma$-invariant. By the third sentence of \Lr{lem 2-con} $X'$ is simply connected since $X$ is. By applying a barycentric subdivision (twice) if needed, we may assume that $X'$ is in addition a simplicial complex.

Next, we consider $Y:= {\rm fat}(X')={\rm fat}(X',\phi')$, which is locally 3-connected by \Lr{lem 3-con}. By the remarks following \Lr{lem 3-con}, we also obtain a \lfl\ embedding $\chi: Y \to \BS^3$, and an extension $\Gamma \act Y$ of the above action. % \wrt\ which $\sigma(\chi)$ is $\Gamma$-invariant. 
Moreover, $Y$ is still simply connected. 

We finish by applying \Tr{thm Whitney3D} to $Y$, and pairs of embeddings of the form $\chi, \chi \circ a$ for each $a\in \Gamma$.  To make this precise, we recall that $\Gamma$ acts on $\chi(Y)\subset  \BS^3$, and we want to extend each $a\in \Gamma$ into a homeomorphism $h_a: \BS^3 \to \BS^3$. We let $h_a$ be the homeomorphism $\alpha$ obtained from \Tr{thm Whitney3D} when applied to the two embeddings $\chi$ and $\psi:= \chi \circ a$ of $Y$. In order for this map $a \mapsto h_a$ to be an action on $\BS^3$, we need it to be a homomorphism from $\Gamma$ to $Aut(\BS^3)$. This will not be the case in general if we let  \Tr{thm Whitney3D}  output any $h_a$ satisfying $\psi= h_a \circ \chi$, because for example $h_{a^{-1}}$ may differ from $(h_a)^{-1}$. But we can control the output of \Tr{thm Whitney3D}  by exploiting its second statement. This ensures that $g \mapsto h_a$ is a homomorphism from $\Gamma$ to $Aut(\BS^3)$ as desired, because restricting each $h_a$ to $Y$ recovers the action of $\Gamma$ on $Y$, which is a homomorphism. (This idea is spelt out in more detail in \cite[Lemma 5.6]{Kleinian}.)

Notice that $h_a \circ \chi = \psi = \chi \circ a$, i.e.\ $\chi(Y)$ is invariant \wrt\ the action we just defined.
\end{proof}

\section{From Group Actions to Invariant Embedded \gcc es} \label{sec i to iv} %Planar Rotation Systems}

In this section we prove the implication \ref{m i} $\to$  \ref{m iv} %\ref{m i} $\to$  \ref{m ii} 
of \Tr{main thm}. An \defi{embedded} 2-complex in a 3-manifold $M$ is a homeomorphic image of a 2-complex in $M$. Given an action $\Gamma \act M$, we say that an embedded 2-complex $X$ is \defi{$\Gamma$-invariant} if $\Gamma \act M$ preserves $X$ setwise.

\begin{theorem} \label{i to ii}
Let $\Gamma$ be a finite group, and $\Gamma \act \BS^3$  a faithful  action by homeomorphisms. Then $\Gamma$ admits an embedded, $\Gamma$-invariant, \gcc. %\gcc\ with a $\Gamma$-invariant \prs. 
\end{theorem}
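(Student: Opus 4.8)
The plan is to build $X$ in three stages --- vertices, then $1$-skeleton, then $2$-cells --- keeping everything invariant under the given action $\Gamma \act \BS^3$ at each step. First I would produce the vertex set. A \gcc\ needs $\Gamma$ to act regularly on $X^0$, so $X^0$ must be a single free orbit; hence I need a point $p$ whose stabiliser is trivial. For each $g \neq e$ the fixed-point set $\mathrm{Fix}(g)$ is closed, and by Newman's theorem a nontrivial finite-order homeomorphism of a connected manifold cannot fix a set with nonempty interior, so each $\mathrm{Fix}(g)$ is nowhere dense. As $\Gamma$ is finite, $\bigcup_{g\neq e}\mathrm{Fix}(g)$ is meagre, so by Baire category its complement is nonempty; any point $p$ in it has trivial stabiliser. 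Set $X^0 := \Gamma p$, a free orbit of size $|\Gamma|$.

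Next I would construct the $1$-skeleton. Choose a finite symmetric generating set $S$ of $\Gamma$, and for each $s \in S$ a tame arc $A_s$ from $p$ to $sp$ with interior disjoint from $X^0$. Declaring the edges to be $\{g A_s : g\in\Gamma,\ s\in S\}$ produces a $\Gamma$-invariant graph whose underlying abstract graph is a generalised \Cg\ of $\Gamma$ (connected because $S$ generates). To make this an \emph{embedded} graph I would put the finitely many base arcs in general position and propagate by the action: since arcs are $1$-dimensional and $1+1<3$, a small equivariant perturbation makes the translates pairwise disjoint apart from shared vertices. This yields an embedded, $\Gamma$-invariant $1$-skeleton $G$.

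Then I would attach $2$-cells to kill $\pi_1(G)$ while preserving invariance and embeddedness. Fixing a finite presentation $\langle S \mid R\rangle$ of $\Gamma$, each relator $r$ and each $g\in\Gamma$ determines a closed walk in $G$; attaching a disc along each such walk gives a complex on which $\Gamma$ still acts regularly on vertices and which is \sico\ (checked by repeated application of \vKT, exactly as in the proof of \Lr{lem 2-con}, since the abstract complex is the \Cc\ of $\langle S\mid R\rangle$). The issue is to realise these discs as an equivariant family of \emph{pairwise-disjoint embedded} $2$-cells in $\BS^3$. I would attempt this by passing to a $\Gamma$-invariant regular neighbourhood $N$ of $G$, which is a handlebody with $\BS^3\setminus \mathrm{int}(N)$ simply connected, filling the relator cycles by discs read off from an invariant meridian structure of $N$ and capping off the resulting $\BS^2$-chambers via the generalised Schoenflies theorem, in the spirit of \Tr{no inf Carm} and \Lr{lem 2-con}.

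The hard part will be exactly this last step. Unlike the $1$-skeleton, the $2$-cells cannot be separated by general position, because two surfaces in a $3$-manifold generically meet in curves ($2+2>3$), so disjointness is a genuine embedding constraint rather than something achievable by perturbation. For a \emph{smooth} (in particular the orthogonal) action one can sidestep this cleanly: take a $\Gamma$-invariant triangulation, pass to the dual $2$-skeleton --- which is honestly embedded and \sico\ because it deformation retracts $\BS^3$ minus a family of open balls --- and then equivariantly collapse a lifted spanning tree of the quotient dual graph to bring the vertex set down to a single orbit, verifying that the surviving orbit is free. The subtlety for a general \emph{topological} action is that such an action need not be conjugate to a smooth one (e.g.\ Bing's involution has a wild fixed sphere), so equivariant triangulations are unavailable and the invariant embedded $2$-cells must be produced intrinsically, using invariant tame (mapping-cylinder) neighbourhoods of $G$ together with local flatness and the generalised Schoenflies theorem. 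I expect this to be the crux of the argument.
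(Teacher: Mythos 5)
Your main route stalls exactly where you predict, and the sketch you offer for getting past the sticking point contains a false claim: a regular neighbourhood $N$ of an embedded graph $G$ in $\BS^3$ is a handlebody whose complement is essentially never \sico\ --- by Alexander duality $H_1(\BS^3\setminus \mathrm{int}(N))\isom \Z^g$ with $g=|E(G)|-|V(G)|+1$, so already for a single unknotted cycle the complement is a solid torus with $\pi_1\isom\Z$. Hence there is no ``invariant meridian structure'' from which the relator discs can simply be read off, and producing a $\Gamma$-equivariant family of pairwise disjoint embedded discs spanning the relator cycles --- which you correctly identify as the crux --- is left unproved. The second obstacle you name and do not resolve, namely that a general topological action need not be conjugate to a smooth one so that equivariant triangulations are unavailable, is precisely what the paper removes at the outset via Pardon's theorem (\Tr{pardon new}): since the conclusion only asserts that $\Gamma$ admits \emph{some} embedded invariant \gcc, one may replace the given topological action by a nearby faithful smooth action and never confront wildness. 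Without that reduction your proposal does not cover the topological case, which is the case the theorem is about.

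Once the action is smooth, your fallback (equivariant adapted triangulation, pass to a $2$-complex whose vertices sit where the action is free, collapse a fundamental-domain tree) is essentially the paper's argument. The paper works with the primal $2$-skeleton of the lifted triangulation: it shows the action is free on $3$-cells, deletes $2$-cells to merge a fundamental domain of chambers into one so that $\Gamma$ acts regularly on chambers (\Lr{action to emb com}), then replaces $X$ by the ``fruit''/flag complex $F(X)$ whose vertices lie in the interiors of chambers and hence carry a free action (\Tr{free the action}, Remark~\ref{rem flag}), and finally applies Babai's contraction lemma. Your dual-$2$-skeleton variant is close in spirit, but you would still need to check, rather than assert, that the complex survives the equivariant tree collapse as an embedded, \sico\ $2$-complex on whose vertices the action is regular.
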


In fact we will prove the following more general statement, which yields a generalisation of the implication \ref{m i} $\to$  \ref{m iv} of \Tr{inf thm}. We say that a chamber $C$ of an embedded 2-complex $Y\subset M$ is \defi{finitary}, if $\partial C$ is a finite subcomplex of $Y$.

\begin{theorem} \label{cor i to ii}
Let $M$ be a 3-manifold, let $\Gamma$ be a finitely  \mymargin{} generated group, and $\Gamma \act M$  a faithful, \pd, co-compact  action by homeomorphisms. Then there is a $\Gamma$-invariant embedded 2-complex $Y\subset M$ with finitary chambers \st\ $\Gamma$ acts regularly on $X^0$, and $\pi_1(Y)\isom \pi_1(M)$.

In particular, when $M$ is \sico, then $\Gamma$ admits a $\Gamma$-invariant  \gcc\ embedded in $M$. % \gcc\ with a $\Gamma$-invariant \prs.
\end{theorem}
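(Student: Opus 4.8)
The plan is to build the invariant 2-complex $Y$ by producing a single compact "fundamental" piece equivariantly and then translating it around by the group. First I would exploit co-compactness and proper discontinuity: since $\Gamma \act M$ is co-compact, there is a compact set $K$ with $\bigcup_{\gamma}\gamma K = M$, and by proper discontinuity only finitely many translates of $K$ meet $K$. I would fix a base orbit $\Gamma o$ of a point $o \in M$ in general position; this orbit is discrete and will serve as $X^0$, on which $\Gamma$ acts regularly by construction (faithfulness plus freeness of the translation action on a generic orbit—one must first check the stabiliser of a generic $o$ is trivial, which follows from faithfulness and the fact that the fixed-point set of each non-trivial $\gamma$ is nowhere dense). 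Then I would connect the vertices in an equivariant way by choosing, for finitely many orbit-representatives of pairs of nearby vertices, embedded arcs, and spreading them by the group action to obtain an embedded $\Gamma$-invariant $1$-complex $Y^1$ whose quotient $Y^1/\Gamma$ is a finite graph. The key point is that $Y^1$ should be connected and have the right fundamental data.

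Next I would attach $2$-cells equivariantly so that the resulting $Y$ is simply connected when $M$ is (more generally, $\pi_1(Y)\isom\pi_1(M)$). The natural mechanism is to take a finite presentation of $\Gamma$ — available since $\Gamma$ is finitely generated and acts co-compactly and properly discontinuously on $M$, hence is finitely presented whenever $M$ is, via a standard Švarc–Milnor / Poincaré-type argument applied to the universal cover — and realise each relator by an embedded disc near the base orbit, again spreading the chosen finite family of discs by $\Gamma$. To guarantee embeddedness of the discs and that their translates are pairwise disjoint except along $Y^1$, I would shrink everything into disjoint equivariant neighbourhoods governed by the proper discontinuity of the action; this is where properness does the real work, ensuring the local picture at each vertex involves only finitely many cells. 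After this, the canonical map $Y \to M$ is an embedding, $\Gamma$ preserves $Y$ setwise, and $\pi_1(Y)\isom\pi_1(M)$ holds because $M$ deformation-retracts (up to the attached cells) onto a regular neighbourhood of $Y$, or equivalently because $M\setminus Y$ consists of open cells.

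The finitary-chambers condition is the point requiring the most care. Each chamber $C$ of $Y\subset M$ is a component of $M\setminus Y$, and I would arrange the $2$-cells so that $\partial C$ is a finite subcomplex. The mechanism is again co-compactness: $Y/\Gamma$ is compact, so there are only finitely many chamber-orbits, and within each orbit a chamber is bounded by finitely many $2$-cells provided the local complex at every vertex is finite, which proper discontinuity delivers. For the final sentence, when $M$ is simply connected we get $\pi_1(Y)\isom\pi_1(M)$ trivial, so $Y$ is simply connected, and together with regularity of $\Gamma\act X^0$ this is exactly the definition of a $\Gamma$-invariant \gcc\ embedded in $M$.

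The main obstacle I anticipate is the \emph{embeddedness} and \emph{disjointness} of the equivariantly-spread $2$-cells: choosing discs for the relators that are themselves embedded is routine in dimension three, but ensuring that a disc and all its $\Gamma$-translates intersect only along the prescribed $1$-skeleton, with no spurious crossings, is delicate. I expect to handle this by a compactness-and-transversality argument: work inside a $\Gamma$-equivariant family of disjoint neighbourhoods of the finitely many cell-orbit representatives, use general position to make the finitely many "nearby" translates meet only along $Y^1$, and then invoke proper discontinuity to conclude that no distant translate can interfere. A secondary subtlety is verifying that a \emph{generic} orbit has trivial stabiliser so that the action on $X^0$ is genuinely regular; this is where one uses faithfulness together with the fact that fixed-point sets of homeomorphisms of finite order (or of non-trivial group elements under a properly discontinuous action) are small.
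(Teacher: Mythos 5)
Your overall strategy (take a generic free orbit as the vertex set, join vertices by equivariant arcs, then attach equivariant discs for the relators of a finite presentation of $\Gamma$) is genuinely different from the paper's, and the vertex/edge part is fine: Newman's theorem does give a point with trivial stabiliser, and an equivariant embedded $1$-complex on that orbit can be built by general position. But the $2$-dimensional part has two gaps that I do not see how to close along your route. First, the relator-disc mechanism only controls the kernel of $\pi_1(Y^1)\to\Gamma$, whereas the theorem demands $\pi_1(Y)\isom\pi_1(M)$ for an arbitrary (not necessarily simply connected) $M$. The loops that must be filled are those null-homotopic \emph{in $M$}, not those representing trivial words in $\Gamma$, and these differ: take $\Gamma$ trivial (or finite) acting on $M=T^3$; your construction produces a point (or a finite complex with the wrong $\pi_1$), while the theorem requires an embedded $2$-complex with $\pi_1\isom\Z^3$. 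Even when $M$ is simply connected, a relator loop in an embedded Cayley graph need not be an embedded circle, and when it is, being null-homotopic in a $3$-manifold does not mean it bounds an \emph{embedded} disc (knotted null-homotopic circles bound no embedded disc), so extra care is needed even there.

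Second, the finitary-chambers argument does not work: local finiteness of $Y$ together with compactness of $Y/\Gamma$ does \emph{not} imply that each chamber is bounded by a finite subcomplex. A chamber is an open subset of $M$ that can be preserved by an infinite subgroup of $\Gamma$ and hence have infinite boundary complex even though the total quotient is compact (e.g.\ $\Gamma=\Z^3$ acting on $\R^3$ with $2$-cells attached only to the horizontal squares of the cubical grid: the quotient is compact but every chamber is an infinite slab). Attaching one disc per relator orbit gives no guarantee that $Y$ cuts $M$ into compact pieces; this is precisely the non-automatic ``finite pre-chambers'' condition that the paper singles out as essential. The paper avoids both problems at the source: it first smooths the action (Pardon), takes an \emph{adapted $\Gamma$-invariant triangulation} of $M$ lifted from $M/\Gamma$, and uses its $2$-skeleton, whose chambers are the $3$-simplices (hence finitary) and whose $\pi_1$ equals $\pi_1(M)$ by van Kampen; it then trades regularity on chambers for regularity on vertices via the blow-up (``fruit'') construction and Babai's contraction lemma. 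If you want to keep your approach, you would need to replace ``discs for relators'' by a genuinely equivariant cell structure that exhausts $M$ --- at which point you are back to constructing an invariant triangulation.
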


Notice that the statement that $Y$ has finitary chambers implies in particular that the vertices of $Y$ have no accumulation point in $M$. \new

\medskip
Recall that $2$-complex $C$ is a generalised Cayley complex of  $\Gamma$ if $C$ is simply connected and $\Gamma$ admits an action on $C$ that is regular on $C^0$. We will construct such a complex embedded in $M$ in two steps. In the first step we construct an embedded 
$2$-complex $X \subset M$ \st\ our action $\Gamma \act M$ is regular on the chambers of $X$ (\Lr{action to emb com}). In the second step we perform local modifications on $X$ to transform regularity on the chambers into regularity of the action on the vertices.

\subsection{Step 1: Constructing an embedded 
$2$-complex with a regular action on its chambers}

The following lemma performs the first step of our construction of a \gcc\ of $\Gamma$ as mentioned above:

\begin{lemma} \label{action to emb com}
Let $M$ be a topological 3-manifold, and let  $\Gamma \act M$ be a faithful,  \pd, co-compact group action by homeomorphisms. Then there is a $\Gamma$-invariant, embedded 2-complex $X\subset M$, \st\ $\pi_1(X)\isom \pi_1(M)$, and $\Gamma$ acts regularly on the chambers of $X$, each of which is finitary and homeomorphic to $\R^3$.
\end{lemma}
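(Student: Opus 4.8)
The plan is to produce $X$ as the union of the walls of a $\Gamma$-equivariant Dirichlet--Voronoi decomposition of $M$ associated to a single \emph{free} orbit, so that the regular action on chambers is built in from the start. First I would reduce to the smooth setting: by the results of Bing--Moise and Pardon \cite{Moise52,bing1959,Pardon21} we may assume $\Gamma$ acts smoothly on $M$, and since \pd\ actions are proper we may also equip $M$ with a complete $\Gamma$-invariant Riemannian metric $d$ (built by averaging via a partition of unity). The one genuine input at this stage is a base point $x_0$ with trivial stabiliser. Any $g\neq e$ fixing a point lies in a finite point-stabiliser by proper discontinuity, hence has finite order, so it is an isometry of $d$ and $\mathrm{Fix}(g)$ is a closed totally geodesic submanifold of dimension at most $2$. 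Only finitely many such sets meet a given compact set, so their union is a locally finite family of nowhere dense sets and a Baire-category argument yields $x_0$ off all of them; its orbit $\Gamma x_0$ is then a discrete set on which $\Gamma$ acts regularly.

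Next I would form the Dirichlet domains $V_g:=\{x\in M: d(x,gx_0)\le d(x,hx_0)\text{ for all }h\in\Gamma\}$. Since $d$ is $\Gamma$-invariant, $h\cdot V_g=V_{hg}$, so $\Gamma$ permutes $\{V_g\}_{g\in\Gamma}$ exactly as it permutes $\Gamma x_0$, i.e.\ regularly; co-compactness with completeness makes each $V_g$ compact, and proper discontinuity makes the family locally finite, so only finitely many walls meet any compact set. The candidate complex is $X:=\bigcup_{g}\partial V_g$, the union of the pieces of the bisectors $\{d(\,\cdot\,,gx_0)=d(\,\cdot\,,hx_0)\}$, and the chambers are the interiors $\mathrm{int}(V_g)$, which form the components of $M\setminus X$. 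Local finiteness gives that each $\partial V_g$ is a finite subcomplex, so the chambers are finitary and the vertices of $X$ do not accumulate. Note also that any point fixed by some $g\neq e$ satisfies $d(p,gx_0)=d(g^{-1}p,x_0)=d(p,x_0)$, so it is equidistant from $x_0$ and $gx_0$ and therefore lies on $X$; thus the singular locus is absorbed into $X$, which is what allows the action on chambers to be free even when the action on $M$ is not.

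The crucial point, and the main obstacle, is to show that each chamber is homeomorphic to $\R^3$ and that $X$ is a genuine $2$-complex rather than a wild subset. Here I would exploit that $V_g$ is \emph{star-shaped} with respect to $gx_0$ along minimising geodesics: if $x\in V_g$ and $y$ lies on a minimising geodesic from $gx_0$ to $x$, then for every $h$ the triangle inequality gives $d(y,gx_0)=d(x,gx_0)-d(x,y)\le d(x,hx_0)-d(x,y)\le d(y,hx_0)$, so $y\in V_g$. Consequently $V_g$ is homeomorphic to the geodesic cone over $\partial V_g$, and it suffices to prove that $\partial V_g$ is a tamely embedded $2$-sphere carrying a finite cell structure; granting this, the cone structure makes $V_g$ homeomorphic to a closed $3$-ball and $\mathrm{int}(V_g)\isom\R^3$. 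This is exactly where smoothness and the generic choice of $x_0$ enter: for generic $x_0$ the finitely many relevant bisectors should meet in general position, making $\partial V_g$ a piecewise-smooth $2$-sphere and the walls assemble into a locally finite, $\Gamma$-invariant, embedded $2$-complex. I expect the careful control of the cut locus of $x_0$ and of the incidences between bisectors --- ruling out discontinuity of the radial function and non-locally-finite or wild behaviour of $\partial V_g$ --- to be the technically delicate heart of the argument.

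Finally, the fundamental-group claim follows formally. Since $M$ is obtained from $X$ by attaching the $3$-cells $\overline{V_g}$ along their boundary $2$-spheres $\partial V_g\subset X$, and attaching cells of dimension $\ge 3$ does not change the fundamental group, \vKT\ --- applied to the locally finite family and using that every loop and every null-homotopy is compact, hence meets only finitely many chambers --- yields $\pi_1(X)\isom\pi_1(M)$, completing the proof of the stated properties.
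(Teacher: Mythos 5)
Your Dirichlet--Voronoi approach has a genuine gap that is not merely the ``technically delicate heart'' you flag (tameness and general position of the bisectors), but a topological obstruction that appears one step earlier: the Dirichlet cells of a single orbit in a Riemannian 3-manifold need not be homeomorphic to balls at all. Take $M=\BS^2\times\R$ with $\Gamma=\Z$ acting by translations in the $\R$-factor and any invariant product metric: then $V_g\isom \BS^2\times[-\tfrac12,\tfrac12]$, its interior is $\BS^2\times(-\tfrac12,\tfrac12)\not\isom\R^3$, its boundary is two disjoint $2$-spheres rather than one, and $X=\bigcup_g\partial V_g$ is a disjoint union of spheres with $\pi_1(X)$ trivial $\neq\pi_1(M)$. (Even more degenerately, for a small or trivial group acting on a closed $M$ the cell is essentially all of $M$.) The point is that a Dirichlet cell sees the topology of $M$ whenever the orbit spacing is not small compared to the injectivity radius, and nothing in the hypotheses prevents this. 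Correspondingly, your cone argument breaks exactly there: star-shapedness along minimising geodesics is correct as far as it goes, but ``$V_g$ is the geodesic cone over $\partial V_g$'' requires the exponential map at $gx_0$ to be injective on the cone, and the cut locus of $gx_0$ can (and in the example does) lie inside $V_g$. No choice of generic $x_0$ or general-position perturbation repairs this; one has to subdivide the cells further, at which point you are effectively rebuilding a triangulation.

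That is what the paper does instead: after smoothing the action via Pardon's theorem, it takes a finite triangulation of the compact quotient orbifold $M/\Gamma$ that is \emph{adapted} to the action and lifts it to a $\Gamma$-invariant triangulation $\tilde T$ of $M$, whose $3$-simplices are genuine balls and on whose $3$-cells the action is automatically free (a setwise-fixed adapted simplex is pointwise fixed, which propagates to all of $M$ and contradicts faithfulness). Regularity on chambers is then \emph{manufactured} rather than built in: one picks a ``tight-connected'' fundamental domain of $3$-cells, deletes shared $2$-faces to merge it into a single chamber still homeomorphic to $\R^3$, and removes the $\Gamma$-translates of those faces, checking via van Kampen that $\pi_1$ is preserved at each deletion. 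Your observation that the singular locus is automatically absorbed into the walls is nice, and your reduction to a free orbit via Baire category is sound, but the core construction needs to be replaced by (or supplemented with) an equivariant cell decomposition into actual balls.
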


For the proof of this we will use the following basic fact: %, the proof of which is an exercise.
\comment{
	\begin{lemma} \label{lem S2 chambers}
Let $M$ be a topological 3-manifold, and let  $X\subset M$ be an embedded 2-complex,  \st\ each chamber of $X$ is precompact and homeomorphic to $\R^3$. %\mymargin{check that precompactness is what we need} %, the boundary of $C$ is locally flat and homeomorphic to $\BS^2$. 
Then $\pi_1(X)\isom \pi_1(M)$. \qed
	\end{lemma}
}

\begin{lemma} \label{lem pi1 chambers}
Let $M$ be a topological 3-manifold, and let  $X\subset M$ be an embedded 2-complex \st\ each chamber of $X$ is homeomorphic to $\R^3$, and it is bounded by a finite subcomplex of $X$. Let $f\in X^2$ be a 2-cell contained in the boundary of two distinct chambers.  
Then $\pi_1(X)\isom \pi_1(X - f)$.
\end{lemma}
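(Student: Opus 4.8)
The plan is to reduce the statement to showing that the boundary cycle of $f$ becomes contractible once $f$ is deleted, and to extract that contraction from the sphere bounding a chamber. Write $\partial f$ for the closed walk in $X^1$ along which the $2$-cell $f$ is attached. Since $X$ is obtained from $X-f$ by attaching the single $2$-cell $f$ along $\partial f$, \vKT\ gives a presentation
\[
\pi_1(X)\isom \pi_1(X-f)\,/\,\langle\!\langle\,[\partial f]\,\rangle\!\rangle ,
\]
where $\langle\!\langle\cdot\rangle\!\rangle$ denotes normal closure and basepoints are suppressed (we may assume $X$, hence $X-f$, connected, treating each component separately otherwise). The natural map $\pi_1(X-f)\to\pi_1(X)$ is therefore an isomorphism precisely when $[\partial f]$ is trivial in $\pi_1(X-f)$, so it suffices to prove that $\partial f$ is null-homotopic in $X-f$.

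To produce the null-homotopy I would use the two chambers $C_1\neq C_2$ whose boundaries contain $f$. The key geometric step is to show that the boundary $\partial C_1$ of the chamber $C_1\isom \R^3$ is a $2$-sphere. Granting this, $f$ is one of the finitely many $2$-cells comprising $\partial C_1$ (recall that each chamber boundary is a union of $2$-cells), so $f$ is an embedded disc in the surface $\partial C_1\isom\BS^2$, and $\partial C_1\setminus f^\circ$ — which lies in $X-f$, since only the open cell $f^\circ$ is deleted — is a closed disc $D$ with $\partial D=\partial f$. This disc is exactly a null-homotopy of $\partial f$ inside $X-f$, completing the argument.

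The main obstacle is thus the claim $\partial C_1\isom\BS^2$, and this is where the hypotheses $C_1\isom\R^3$ and finitary enter. I would argue as follows. First, no $2$-cell $g\subseteq \partial C_1$ can have $C_1$ on both of its local sides: such a configuration would make $C_1$ homotopy equivalent to the complement $\R^3\setminus g$, which is homotopy equivalent to $\BS^2$ and in particular not contractible, contradicting $C_1\isom\R^3$. (For the cell $f$ itself this is automatic, as $f$ borders two \emph{distinct} chambers.) Hence $C_1$ lies locally on one side of $\partial C_1$ along each of its $2$-cells, so $\partial C_1$ is a closed surface and, using that the finitary hypothesis makes $\partial C_1$ compact with $C_1$ its precompact interior, $\bar C_1=C_1\cup\partial C_1$ is a compact $3$-manifold with boundary $\partial C_1$ and interior $C_1\isom\R^3$. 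Finally, since $\R^3$ has a single, simply connected end, the cross-section of that end — namely $\partial C_1$ — must be $\BS^2$; concretely, a collar realises $\partial C_1$ as a locally flat sphere and the generalised Schoenflies theorem \cite{BrownGS, MazurGS} identifies $\bar C_1$ with the closed $3$-ball. The same applies to $C_2$.

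I expect the fiddly part to be precisely the verification that $\partial C_i$ is a genuine closed surface (local two-sidedness, and the absence of pathologies at vertices and edges of $X$), together with the passage from ``compact $3$-manifold with interior $\R^3$'' to ``$3$-ball'', which quietly invokes simple connectivity at infinity of $\R^3$ and local flatness. As a symmetric alternative that packages these facts differently, once every chamber closure is known to be a $3$-ball one observes that $M$ is recovered from $X$ by attaching $3$-cells along the spheres $\partial C_i$, whence $\pi_1(M)\isom\pi_1(X)$; deleting $f$ merges $C_1$ and $C_2$ into a single chamber $C'=C_1\cup f^\circ\cup C_2$, which is again an open $3$-ball with spherical boundary $(\partial C_1\setminus f^\circ)\cup(\partial C_2\setminus f^\circ)$, so likewise $\pi_1(M)\isom\pi_1(X-f)$, and the two isomorphisms combine to give $\pi_1(X)\isom\pi_1(X-f)$.
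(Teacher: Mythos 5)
Your reduction via \vKT\ to showing that $\partial f$ is null-homotopic in $X-f$ is exactly the paper's first step, but the way you produce the null-homotopy contains a genuine gap: the central claim that $\partial C_1$ is homeomorphic to $\BS^2$ is false under the lemma's hypotheses. The paper itself warns about this in the proof of Lemma~\ref{action to emb com}: after merging chambers one obtains a chamber homeomorphic to $\R^3$ whose boundary ``need not be homeomorphic to $\BS^2$'' --- and that is precisely the situation in which Lemma~\ref{lem pi1 chambers} is invoked. A concrete counterexample: let $M=\BS^3$, let $T$ be a standardly embedded torus, let $D_1$ be a meridian disc of the inner solid torus and $D_2$ a meridian disc of the outer one, and let $X=T\cup D_1\cup D_2$ (suitably subdivided). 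Both chambers are open balls and are finitary, and any 2-cell $f\subseteq T$ lies in the boundary of both; yet $\partial C_1=T\cup D_1$ is not a surface at all, let alone a sphere, and $\partial C_1\setminus f^\circ$ is not a disc. Your sub-argument for two-sidedness also fails here: $D_1$ has $C_1$ on both of its local sides, and this does \emph{not} make $C_1$ homotopy equivalent to $\R^3\setminus D_1$ --- $C_1$ is still an open ball. So the embedded disc $D$ with $\partial D=\partial f$ that your argument relies on simply does not exist in general, and the ``symmetric alternative'' in your last paragraph (recovering $M$ by attaching 3-cells along spheres $\partial C_i$) founders on the same point.

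The fix, which is what the paper does, is to settle for a \emph{singular} disc: since $C_1\isom\R^3$ and $C_1$ approaches $f$ from exactly one side (here you do use that $f$ separates two distinct chambers), one can deform $f$ through $C_1$ onto a continuous image of a disc lying in $\partial C_1 - f^\circ\subseteq X-f$; a continuous map of a disc bounded by $\partial f$ is all that is needed to kill $[\partial f]$ in $\pi_1(X-f)$. In the example above this is visible directly: $[\partial f]=[m,\ell]$ in $\pi_1\bigl((T-f^\circ)\cup D_1\bigr)\isom\Z$, which is trivial, even though no embedded disc in $\partial C_1-f^\circ$ spans $\partial f$. Your write-up would be correct if you replaced the spherical-boundary claim by this weaker homotopical statement and justified the push-across using only $C_1\isom\R^3$ and the finitary hypothesis.
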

%%%%%%%%%%%%%
\begin{proof}
Let $C_1,C_2$ denote the two chambers having $f$ in their boundaries. We can continuously deform $f$ via $C_1$ (or $C_2$) onto a continuous image $f'\subset \partial C_1 - f$ of a topological disc using the fact that $C_1$ is homeomorphic to  a ball in $\R^3$, and $f\subset \partial C_1$ is homeomorphic to a disc. We can use $f'$ to show that the circle $\partial f$ is 0-homotopic in $X - f$. 
Thus we have $\pi_1(X)\isom \pi_1(X - f)$ by \vKT\ since $f$ is \sico.
\end{proof}

%%%%%%%%%%%%%
\begin{proof}[Proof of \Lr{action to emb com}]
We may assume \obda\ that our action $\Gamma \act M$ is smooth by Pardon's \Tr{pardon new}. 
%Every group $\Gamma$ that admits a \pd\ action on a 3-manifold $M$, also admits a \pd\ smooth action on $M$; 
%this was proved by Pardon in \cite{Pardon21} (\Tr{pardon}) for the case where $\Gamma$ is finite, and extended to the infinite case in \cite{PardonMO}.

It is known that for every such action, the quotient space $M/\Gamma$ ---which is a 3-orbifold, but the reader will not need to know what this means--- admits a triangulation $T$ \cite[Proposition~1.2.1]{MoPrSim}, which is \defi{adapted} to the action in the sense that for each simplex $\sigma$ of $T$, the stabilisers under $\Gamma$ of all pre-images of points in $\sigma$ are isomorphic to each other. Since $\Gamma \act M$ is co-compact, $M/\Gamma$ is compact, and thus $T$ is finite. Let $\pi: M \to M/\Gamma$ be the quotient projection. Its inverse $\pi^{-1}$ lifts $T$ to a triangulation $\tilde{T}$ of $M$, as proved in \cite[Lemma~1.2.2]{MoPrSim}, which is $\Gamma$-invariant by construction. We think of the 2-skeleton $\tilde{T}^2$ of $\tilde{T}$ as an embedded 2-complex in $M$. It is straightforward to check that the chambers of $\tilde{T}$ are exactly its 3-cells. It is easy to show that $\pi_1(\tilde{T}^2)\isom \pi_1(M)$ by applying \vKT\ to the 3-cells of $\tilde{T}$.
%(Thus $\pi_1(\tilde{T}^2)\isom \pi_1(M)$ by \Lr{lem pi1 chambers}.)

Notice that the action $\Gamma \act M$ is free on the 3-cells of $\tilde{T}$, and therefore on the chambers of $\tilde{T}^2$, because it is faithful. Indeed, if an element $g$ of $\Gamma$ fixed a 3-cell $C$ setwise, then $g$ would have to fix $C$ pointwise. This would force $g$ to also fix the 3-cells incident with $C$, hence all of $M$ by its connectedness, implying that $g$ can only be the identity of $\Gamma$. 

If the action is not transitive on the chambers of $\tilde{T}^2$, then we can find a subcomplex $X$ of $\tilde{T}^2$ which maintains the other desired properties and such that $\Gamma$ acts transitively on the chambers of $X$, by finding an appropriate fundamental domain of 3-cells of $\tilde{T}^2$ and joining them into one chamber. To do so we introduce the following notion.

Say that two chamber-boundaries $C,D$ of a 2-complex $H$ are \defi{adjacent}, if their boundaries share a 2-cell of $H$. Say that $C,D$ are \defi{tight-connected}, if there is a sequence $C_1,\ldots C_k$ of chamber-boundaries \st\ $C_1=C, C_k=D$, and $C_i$ is adjacent to $C_{i+1}$ \fe\ $1\leq i<k$. A \defi{tight-component} is a maximal tight-connected set of chamber-boundaries of $H$. It is straightforward to check that 
\labtequ{tight}{the boundaries of 3-cells of a triangulation of any connected 3-manifold form a single tight-component.}

Let $F$ be a maximal tight-connected set of boundaries of 3-cells of $\tilde{T}^2$ that contains at most one representative from each $\Gamma$-orbit of 3-cells of $\tilde{T}$. The maximality of $F$, combined with \eqref{tight}, easily implies that $F$ contains a representative from each $\Gamma$-orbit of 3-cells, for otherwise we could add to $F$ a 3-cell adjacent with one of its elements (this is a well-known idea, appearing e.g.\ in \cite{BabaiContr}). Thus $F$ contains exactly one representative of each $\Gamma$-orbit of 3-cells, in other words, the action of $\Gamma$ on the translates of $F$ is regular. Moreover, $F$ is finite since $\Gamma \act M$ is co-compact.

We claim that there is a set $D$ of 2-cells of $\bigcup F$ \st\ $\bigcup F \sm D$ has only one chamber $C$, and moreover $C$ is homeomorphic to $\BS^3$. Indeed, we can construct $D$ recursively as follows. As long as $\bigcup F$ has more than one chamber (each homeomorphic to $\R^3$) we can find two of them $C_1,C_2$  sharing a 2-cell $f$ by tight-connectedness. By removing $f$ we join $C_1,C_2$ into one chamber, which is homeomorphic to $\R^3$ since both $C_1,C_2$ are. (The boundary of the new chamber need not be homeomorphic to $\BS^2$, however.) It is easy to see that the tight-connectedness of the chamber-boundaries of $F$ is preserved. Since $F$ is finite, this recursion terminates leaving a single chamber, proving our claim.

Let $X\subset M$ be the 2-complex obtained from $\tilde{T}^2$ by removing a set $D$ as above along with all its $\Gamma$-translates. Then $\bigcup F$ is contained in one chamber of $X$, and it follows that $\Gamma$ acts regularly on the chambers of $X$. 

By construction, each chamber of $X$ is still homeomorphic to $\R^3$ and finitary. 

Notice that whenever we removed a 2-cell $f$ of $\tilde{T}^2$ we joined two chambers $C_1,C_2$ into one, and so we did not change $\pi_1$ by \Lr{lem pi1 chambers}. Thus $\pi_1(X)\isom \pi_1(\tilde{T}^2)$, which coincides with  $\pi_1(M)$ as noticed above.
\end{proof}

\subsection{Step 2: From regularity on the chambers to  regularity on the vertices}

Having constructed an embedded 2-complex $X\subset M$ \st\ the action $\Gamma \act M$ of \Tr{i to ii} or \ref{cor i to ii} is regular on the chambers of $X$, our aim now is to modify $X$ locally so that the action becomes regular on the vertices. Most of the work will go into making the action free on the vertices, because having done so we will be able to use the standard trick of contracting a fundamental domain to achieve transitivity. To formulate this trick in our setup, given a graph $G$, and a subgroup $\Gamma$ of the automorphism group $Aut(G)$ of $G$, we call a  subgraph $H\subseteq G$ a \defi{fundamental domain} for $\Gamma$, if $H$ contains exactly one vertex from each $\Gamma$-orbit. 

\begin{lemma}[Babai's Contraction Lemma \cite{BabaiContr}] \label{lem Babai}
Let \g be a connected graph, and suppose a group $\Gamma\leq Aut(G)$ acts freely on the vertex set $V(G)$. Then there is a connected subgraph $D \subset G$ that is a fundamental domain for the action, and the graph $G / D$  obtained by contracting each $\Gamma$-image of $D$ into a point is a generalised Cayley graph of $\Gamma$. In particular, $\Gamma$ acts transitively on $V(G / D)$.
\end{lemma}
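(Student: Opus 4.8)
The plan is first to build a \emph{connected} fundamental domain $D$, and then to verify that contracting its $\Gamma$-translates produces a generalised Cayley graph. The construction of $D$ is the only step that needs an idea; everything afterwards is bookkeeping that relies on the freeness of the action on $V(G)$.

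To build $D$, I would pass to the quotient graph $Q := G/\Gamma$, whose vertices are the $\Gamma$-orbits on $V(G)$, and in which two orbits are joined whenever some pair of their representatives is joined in $G$. Since $G$ is connected, so is $Q$, so $Q$ has a spanning tree $T$; root it at some orbit $O_0$ and fix a vertex $d_0 \in O_0$. I would then lift $T$ to $G$ orbit by orbit, processing the orbits in order of their distance from $O_0$ in $T$: whenever an orbit $O'$ is joined in $T$ to an already-processed orbit $O$ with chosen representative $d$, the adjacency of $O$ and $O'$ in $Q$ yields an edge $ab$ of $G$ with $a\in O$ and $b\in O'$; writing $a = gd$ and applying $g^{-1}$, we obtain the edge of $G$ joining $g^{-1}a = d$ to $g^{-1}b\in O'$, and we take $g^{-1}b$ as the representative of $O'$. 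Because $T$ is a tree there is a unique path from the root to each orbit, so these choices never conflict, and the selected vertices together with the lifted edges form a connected subgraph $D$ meeting each $\Gamma$-orbit in exactly one vertex, i.e.\ a connected fundamental domain.

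Next I would record the key consequence of freeness: the translates $\{gD : g\in\Gamma\}$ partition $V(G)$. Every $v\in V(G)$ lies in a unique orbit, whose representative in $D$ is some $d$, and freeness supplies a \emph{unique} $g$ with $v = gd$, so $v \in gD$ and in no other translate. Moreover $g \mapsto gD$ is injective: if $gD = g'D$ as vertex sets, then $g^{-1}g'$ fixes $D$ setwise, and since $D$ meets each orbit exactly once it must fix every vertex of $D$, hence $g^{-1}g'$ is the identity by freeness. Thus the translates are indexed bijectively by $\Gamma$.

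Finally, I would contract each translate $gD$ to a single point to obtain $G/D$. Its vertices are in bijection with $\Gamma$ via $g \mapsto [gD]$, and $G/D$ is connected because $G$ is and contraction preserves connectivity. The action $\Gamma \act G$ by automorphisms permutes the translates by $h\cdot gD = (hg)D$, so it descends to an action of $\Gamma$ on $G/D$ by automorphisms which, on vertices, is exactly left multiplication on $\Gamma$: free and transitive, hence regular. Therefore $G/D$ is a generalised Cayley graph of $\Gamma$ and $\Gamma$ acts transitively on $V(G/D)$, as required. I expect the lifting step --- producing a \emph{connected} fundamental domain rather than merely an orbit transversal --- to be the only real obstacle; the partition property and the regularity of the contracted action then follow directly from freeness.
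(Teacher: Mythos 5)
Your proof is correct, and it is the standard argument for Babai's Contraction Lemma (lift a spanning tree of the orbit-quotient graph to obtain a connected transversal, then use freeness to see that the translates of $D$ partition $V(G)$ and that the induced action on the contracted graph is regular). The paper itself states this lemma as a cited result from \cite{BabaiContr} and gives no proof, so there is nothing to compare beyond noting that your argument is essentially the one in the original reference and fills the gap correctly.
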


The graph we will later apply \Lr{lem Babai} to is the 1-skeleton of our 2-complex.

\medskip
Thus it remains to transform the freeness of the action on the chambers arising from \Lr{action to emb com} into freeness on the vertices. This is carried out by the following result.

\comment{
	\begin{theorem} \label{free the action}
Let $\Gamma \act \BS^3$ be a  finite group action by homeomorphisms, and let $X\subset \BS^3$ be a $\Gamma$-invariant,  \sico,  embedded 2-complex, each chamber of which is finitary and homeomorphic to $\R^3$. Suppose moreover that the action of $\Gamma$ on the chambers of $X$ is regular. Then there is a $\Gamma$-invariant, \sico, embedded 2-complex  $F(X) \subset \BS^3$, \st\ the action of $\Gamma $ on $V(F(X))$ is free. \mymargin{\tiny Add that `$F(X)$ can be chosen to be \pl\ if $X$ is' if needed.}
\end{theorem}
}
%As mentioned in Remark~\ref{rem M}, the above proof proves the following strengthening of \Tr{free the action}.

\begin{theorem} \label{free the action}
Let $M$ be a topological 3-manifold, and let $\Gamma \act M$ be properly discontinuous group action by homeomorphisms. Let $X\subset M$ be a $\Gamma$-invariant,  embedded 2-complex, \st\ $\pi_1(X)\isom \pi_1(M)$, the action of $\Gamma$ on the chambers of $X$ is regular, and each chamber is finitary and homeomorphic to $\R^3$. Then there is a $\Gamma$-invariant, embedded 2-complex  $F(X)\subset M$ with finitary chambers, \st\ the action of $\Gamma $ on $V(F(X))$ is free, and $\pi_1(F(X))\isom \pi_1(M)$.
\end{theorem}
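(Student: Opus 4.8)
The plan is to relocate the $1$-skeleton off the fixed-point set of $\Gamma$, exploiting that the chambers, being \emph{freely} permuted, consist entirely of points with trivial stabiliser. We will allow $\Gamma$ to keep fixing edges and $2$-cells; we only need to ensure that no vertex is fixed. First I would reduce everything to a local problem. By Pardon's \Tr{pardon new} we may assume $\Gamma \act M$ is smooth. Applying proper discontinuity to the compact set $\{v\}$ shows each vertex stabiliser $\Gamma_v$ is finite, and since the chambers of $X$ are finitary the complex $X$ is locally finite, so its vertices have no accumulation point. Hence we may fix a $\Gamma$-equivariant family of pairwise disjoint closed balls $\{B_v\}_{v\in X^0}$ (so $gB_v=B_{gv}$, each $B_v$ being $\Gamma_v$-invariant with $\Gamma_v$ acting linearly on $\partial B_v\isom\BS^2$) small enough that $X\cap B_v$ is the cone, with apex $v$, over the link graph $\Lambda_v:=X\cap\partial B_v$. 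The faces of $\Lambda_v$ in the sphere $\partial B_v$ correspond to the chambers of $X$ incident with $v$; as $\Gamma$ acts freely on chambers, $\Gamma_v$ acts freely on these faces, and every point in the interior of a local chamber is a free point of the action.

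Second, I would modify $X$ only inside $\bigcup_v \mathrm{int}(B_v)$, replacing each cone $X\cap B_v$ by a new, $\Gamma_v$-invariant $2$-complex $Y_v\subset B_v$ with $Y_v\cap\partial B_v=\Lambda_v$, designed so that every vertex of $Y_v$ is placed at a free point in the interior of a local chamber. Concretely, in each local chamber I place one new vertex $w_\phi$ (one per face $\phi$ of $\Lambda_v$); since $\Gamma_v$ permutes the faces freely, the $w_\phi$ form free orbits, and each lies inside a chamber, so (by freeness on chambers) has trivial stabiliser in all of $\Gamma$. The apex $v$, and the vertices of $\Lambda_v$ arising from $\Gamma_v$-fixed edges of $X$, are dissolved: each edge $e$ of $X$ meeting $\partial B_v$ is continued into $B_v$ and the configuration around it is ``pushed off'' the fixed-point set and re-attached to the free vertices $w_\phi$, so that $v$ disappears and the point $e\cap\partial B_v$ becomes a degree-two interior point of an edge rather than a vertex. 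Performing this on one representative per $\Gamma$-orbit of vertices and spreading by the action keeps $F(X):=\bigl(X\setminus\bigcup_v\mathrm{int}(B_v)\bigr)\cup\bigcup_v Y_v$ a $\Gamma$-invariant embedded $2$-complex whose vertex set is exactly $\{w_\phi\}$, hence free.

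Third, I would check the remaining invariants. Finitariness of the chambers is preserved because each modification is confined to a ball $B_v$ and each $\Lambda_v$ is finite. For the fundamental group, set $X_0:=X\cap\bigl(M\setminus\bigcup_v\mathrm{int}(B_v)\bigr)$, common to $X$ and $F(X)$; then $X=X_0\cup\bigcup_v\mathrm{cone}(\Lambda_v)$ and $F(X)=X_0\cup\bigcup_v Y_v$, both glued along $\bigcup_v\Lambda_v$. Arranging $Y_v$ to be simply connected, just like the cone it replaces, \vKT\ kills in both cases exactly the normal closure of the image of $\pi_1\!\left(\bigcup_v\Lambda_v\right)$ in $\pi_1(X_0)$, whence $\pi_1(F(X))\isom\pi_1(X)\isom\pi_1(M)$; where convenient, intermediate removals of $2$-cells between the $\R^3$-chambers can be justified by \Lr{lem pi1 chambers}.

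The main obstacle is precisely the existence of the equivariant filling $Y_v$. There is a genuine tension: $\Gamma_v$-equivariance pulls towards a distinguished invariant centre (the apex $v$), which is necessarily a fixed point, whereas freeness forbids any fixed vertex. The difficulty concentrates on the $\Gamma_v$-fixed edges of $X$: if a subgroup fixes an edge $e$ pointwise while cyclically permuting the chambers around it, then there is no equivariant way to cap $e$ off at a single new vertex, so the filling near $e$ must be genuinely distributed and symmetric (a zig-zag/dual-type gadget whose vertices are pushed into the surrounding freely-permuted chambers). Exhibiting such a $Y_v$ that is simultaneously $\Gamma_v$-equivariant, simply connected (so the van Kampen computation matches that of the cone), and free of fixed vertices is the technical heart of the argument.
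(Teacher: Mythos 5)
Your guiding idea---place the new vertices inside the chambers, so that freeness on chambers forces freeness on vertices---is exactly the paper's, and your reduction of finitariness and of $\pi_1$ via \vKT\ is fine. But the construction has a genuine gap, and it is precisely the one you flag at the end as ``the technical heart'': you never exhibit the equivariant filling $Y_v$, and in the form you propose it cannot exist. The root of the problem is that you confine all modifications to balls $B_v$ around the \emph{vertices}, leaving the 1-cells of $X$ untouched outside $\bigcup_v B_v$. Freeness on chambers does not prevent an edge $e=uv$ from having a nontrivial stabiliser: a $g\neq 1$ may fix $e$ pointwise and cyclically permute the $k\ge 2$ chambers around $e$ with no fixed chamber. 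In your $F(X)$ the arc $e\setminus\bigcup_v\mathrm{int}(B_v)$ survives and must lie in a 1-cell $e'$ of $F(X)$; since distinct 1-cells have disjoint interiors, $g e'=e'$. If $e'$ continues into $B_v$ and terminates at a single free vertex $w_\phi$, then $g$ fixes $w_\phi$, forcing $g=1$ by freeness on chambers---a contradiction, so no such equivariant attachment exists (as you observe). But the ``distributed'' alternative fares no better: attaching several edges into the surrounding chambers from the point $e\cap\partial B_v$ gives that point degree $\ge 3$, so it must be a 0-cell of $F(X)$, and it is fixed by $g$. Either way a fixed vertex or an impossible equivariance constraint appears. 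The same issue arises along the whole fixed axis of $e$, so it cannot be localised at the vertex balls.

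The paper escapes this by blowing up not only the 0-cells (your $B_v$, its ``pineapples'') but also every 1-cell into a ``banana'' sphere and every 2-cell into a ``mango'' sphere (\Dr{fruit cx}), so that no point of the original 1-skeleton, and in particular no point of a fixed rotation axis, survives as a vertex: the vertices of the resulting complex are indexed by full flags $(c_0,c_1,c_2,c_3)$ including a chamber coordinate, which is what makes freeness automatic. Equivariance of the choices is then the remaining delicate point, and it is handled by working over the closure of a single fundamental chamber and prescribing, on each 2-cell, a ``slice pattern'' of arcs invariant under the (at most order-2) stabiliser of that 2-cell (\Lr{sl pat} and \Lr{lem FD}); note that only 2-cell stabilisers are this small---edge stabilisers can be large, which is exactly why your edge-local gadget cannot be made symmetric. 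To repair your argument you would need to extend the surgery from neighbourhoods of vertices to neighbourhoods of the entire 1-skeleton and of the 2-cells, at which point you essentially recover the paper's construction.
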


Our formal definition of the complex $F(X)$ that achieves this takes some time, but the idea is rather simple: given a vertex $x$ of $X$ stabilised by $\Gamma \act M$, we notice that we can pick a set $O_x$ of nearby points  inside the chambers incident with $x$ \st\ the action of $\Gamma$ on $O_x$ is regular, because $\Gamma$ acts freely on the chambers. The idea is to blow up each 2-cell, 1-cell, and 0-cell of $X$ into a homeomorph of $\BS^2$, in order to modify $X$ into a complex $F(X)$ with vertex set $\bigcup_{x\in X^0} O_x$. An example is shown in \fig{fig cuboct}: if $X$ is the standard cubic lattice embedded in $\R^3$ (top left), then a portion of $F(X)$ is displayed in the bottom right of the figure.

The reader will lose nothing by assuming that $M= \BS^3$ and $\Gamma$ is finite throughout this section; this is enough for proving \Tr{main thm}, and this assumption makes no difference for any the proofs in this section. 
\medskip

%\begin{remark} \label{rem M}
%... In fact we will prove something more general, see \Cr{cor free the action}...  \end{remark}

\begin{figure} 
\begin{center}
\begin{minipage}{.5 \linewidth}
\begin{tabular}{ll}
\includegraphics[width=0.55\linewidth]{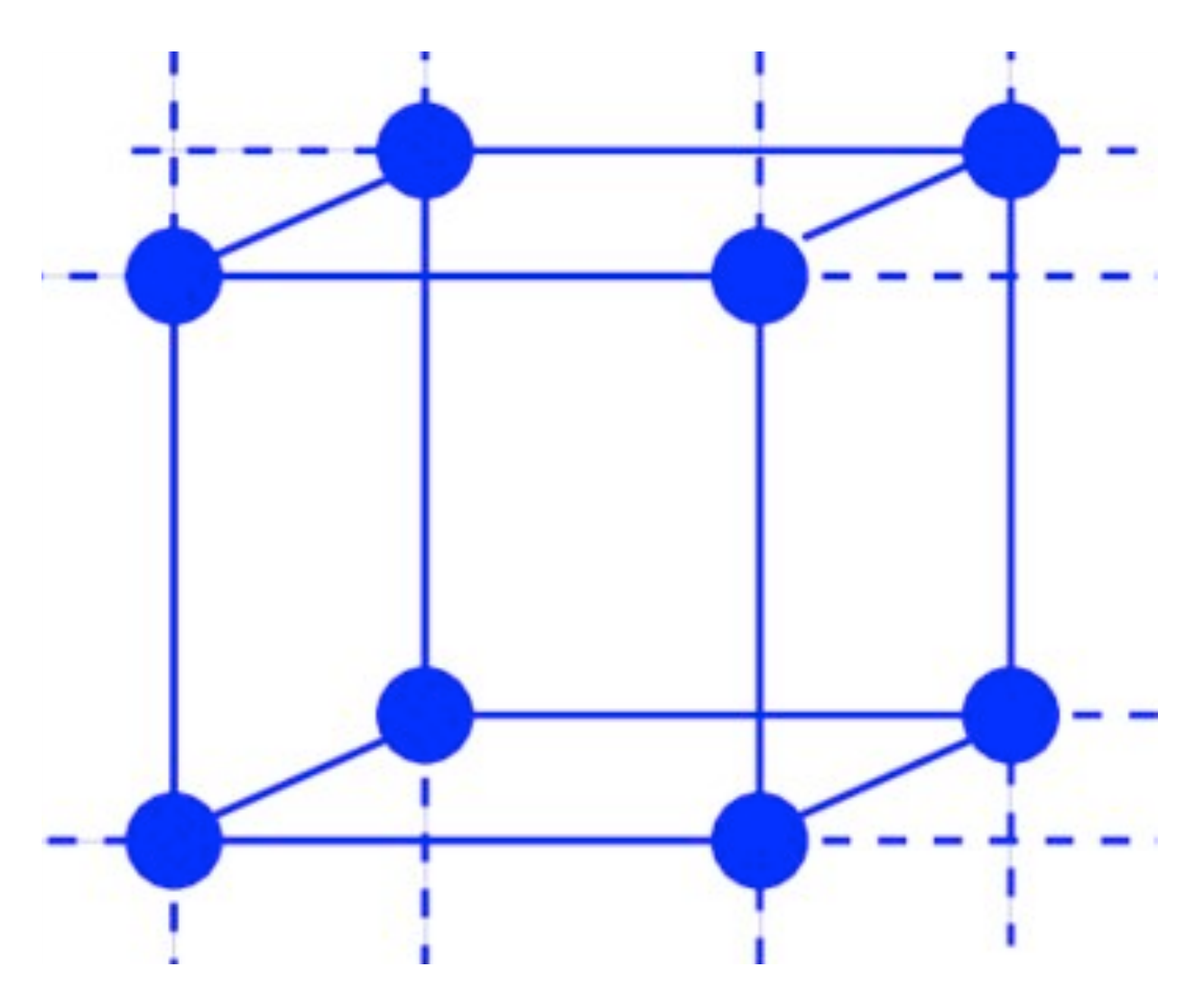} & \reflectbox{\includegraphics[width=0.45\linewidth,angle=16,origin=c]{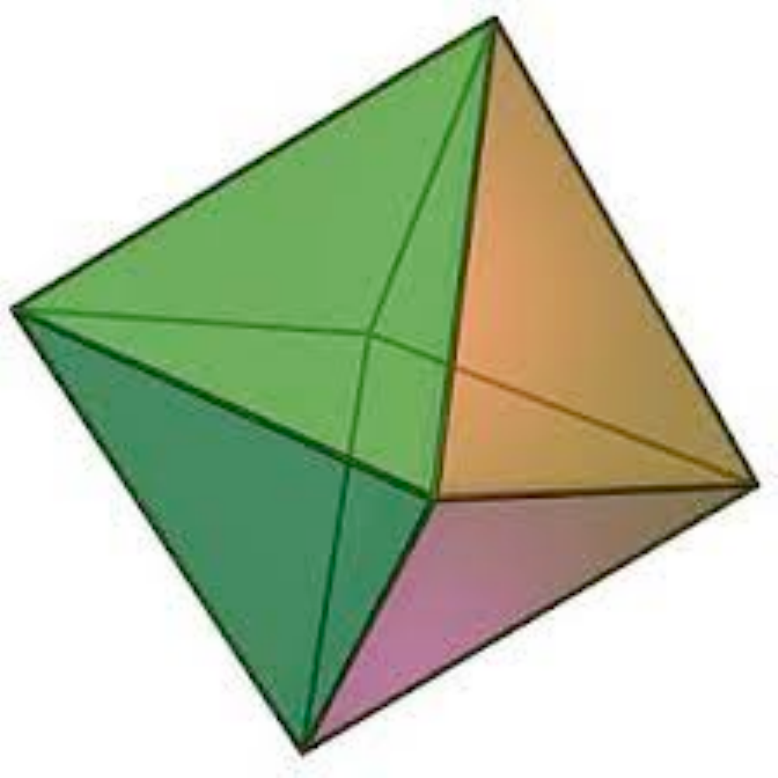}}    \\
%\vspace*{.2cm}
\includegraphics[width=0.45\linewidth]{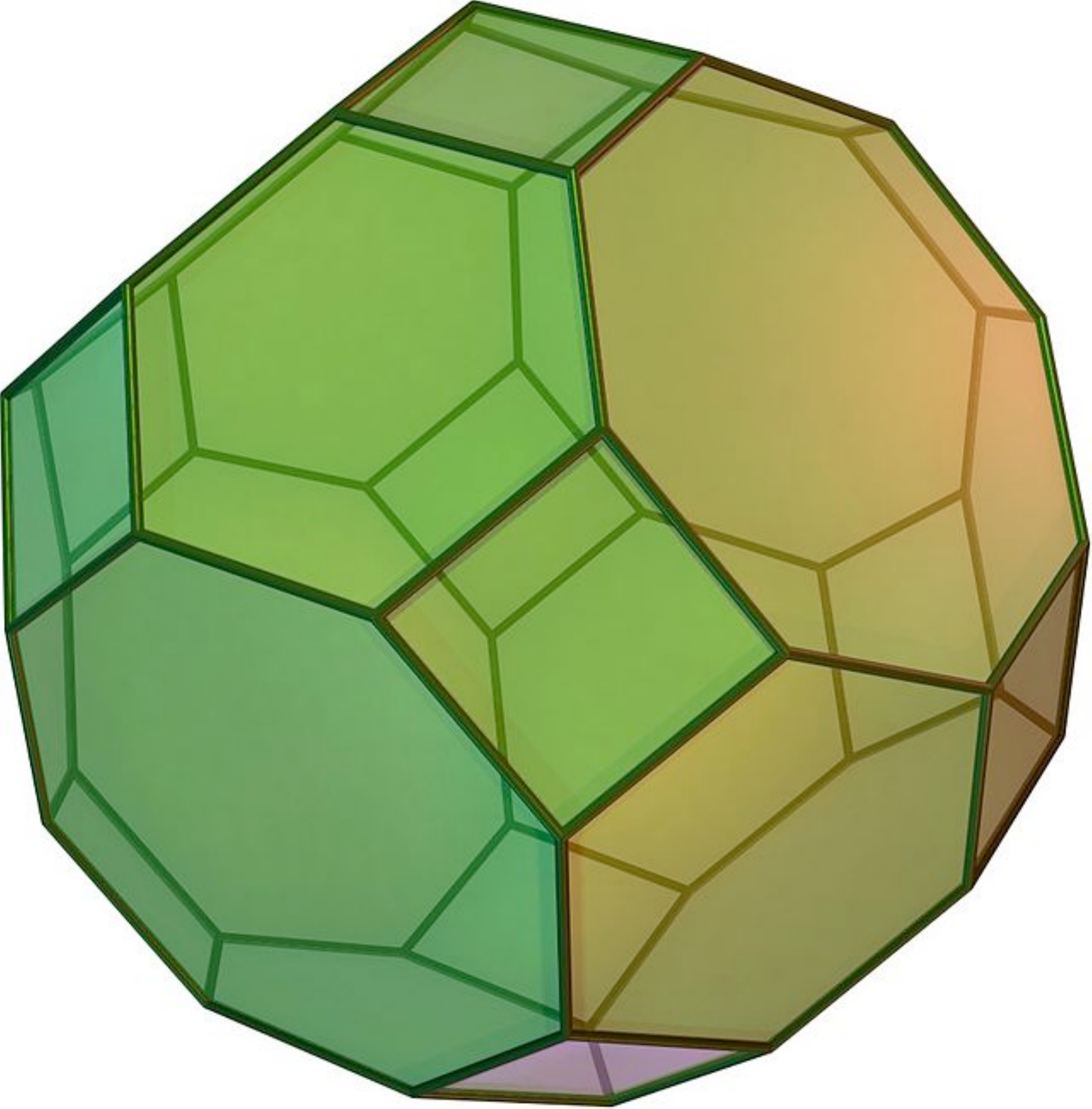} &  \includegraphics[width=0.45\linewidth]{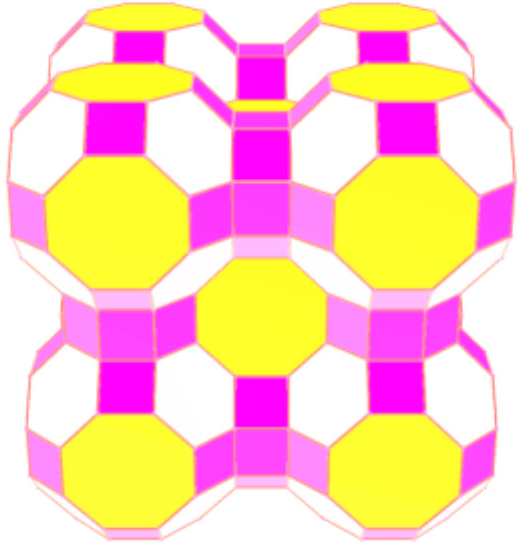}    \\
\end{tabular}
\end{minipage}
\end{center}
\caption{An example $F(X)$, when $X$ is the cubic lattice (top left). Each link graph is an octahedron (top right), and so each pineapple of $F(X)$ is a truncated cuboctahedron (bottom left). The pineapples are arranged as in the bottom right figure, which shows four of them in the front.} \label{fig cuboct}
\end{figure}

We now prepare for the formal definition of $F(X)$.
Given an embedded 2-complex $X\subset M$, and a homeomorphic image $S$ of $\BS^2$ in $M$ which is locally flat, we say that $S$ is \defi{adapted} to $X$ if 
\begin{enumerate}
\item \label{c i} for every 1-cell $e\in X^1$ the intersection $S \cap e$ is either a single point, or all of $e$, or empty;
\item \label{c ii} for every 2-cell  $f\in X^2$, the intersection $S \cap f$ is either an arc between two points of the boundary of $f$ (either 0-cells, or interior points of 1-cells), or empty, and
\item \label{c iii} $S$ separates  $M$ into two components.
\end{enumerate}
If $S$ is adapted to $X$, and $A$ is one of the two components into which $S$ separates  $M$, %(by the generalised Schoenflies theorem \cite{BrownGS, MazurGS}), 
then we can obtain an embedded 2-complex $X_A$ from $X$ by removing $X \cap A$ and adding $S$ to $X$; to make this more precise, we define the \defi{$A$-truncation} of $X$ to be the embedded 2-complex $X_A$ obtained as follows. 
\begin{enumerate}
\item \label{t i} For every 1-cell $e\in X^1$ that intersects $S$ at a point $p$, we declare $p$ to be a 0-cell of $X_A$, we declare the subarc of $e$ lying outside $A$ to be a 1-cell of $X_A$, and discard the subarc of $e$ lying inside $A$. 
\item \label{c ii} For every 2-cell  $f\in X^2$ intersecting $S$ along an arc $P$ between two points $x,y$ of the boundary of $f$, we declare $P$ to be a 1-cell of $X_A$ ---its end-points $x,y$ must be 0-cells of $X_A$ by \ref{t i}. Notice that $f \sm (S \cup A)$ is homeomorphic to a disc $f'$, and we declare $f'$ to be a  2-cell of $X_A$, discarding $f$.
\item \label{c iii} The points $p$ and arcs $P$ as in \ref{c i}--\ref{c ii}subdivide $S$ into topological discs, which we declare to be 2-cells of $X_A$.
\item \label{c iv} For every cell $C$ of $X$ that does not intersect $S$, we keep $C$ in $X_A$ if it lies outside $A$, and discard it if it is contained in $A$.
\end{enumerate}

We now construct the 2-complex $F(X)$ featuring in \Tr{free the action} by a combination of such truncations.

\begin{definition} \label{fruit cx}
Given an embedded 2-complex $X\subset M$, we construct another embedded 2-complex $F(X)\subset M$ as follows. 
\end{definition}
\begin{enumerate}
\item \label{f i} We blow each 2-cell $f\in X^2$ up like a \defi{mango}; that is, we replace $f$ by two `parallel' copies $f',f''$ with the same boundary and attachment map, and embed $f',f''$ into $M$ so that one of the sides of the 2-sphere $S_f:= f' \cup f''$ contains $f$ and is otherwise disjoint from $X$.  Moreover, we ensure that $S_f$ is locally flat, and disjoint from $g' \cup g''$ \fe\ $g\neq f\in X^2$ except possibly for intersections along $X^1$; in other words, our mangos do not cross each other. Let $X_1$ denote the resulting 2-complex. We call $f' \cup f''$ the \defi{mango} of $f$, and imagine its side containing $f$ as one.

\item \label{f ii} Next, we blow each 1-cell $e\in X_1^1$ up like a \defi{banana}. To define this formally, let $S_e$ be a homeomorph of $\BS^2$ in $M$ \st\ the end-vertices of $e$ lie on $S_e$, one side $A_e$ of $S_e$ contains the interior of $e$ and is otherwise disjoint from the 1-skeleton of $X_1$, and $S_e$ is adapted to $X_1$ and locally flat. It is easy to find such $S_e$, and to ensure that they are pairwise disjoint except possibly at their vertices.
%each 2-cell $f$ containing $e$ intersects $S_e$ along an arc $e_f$. We declare each arc $e_f$ to be a 1-cell of our next 2-complex $X_2$, which has the same vertex-set as $X$. Notice that $e_f$ separates $f$ into two topological discs, one lying in $A_e$, which we discard, and one lying outside $A_e$, which we declare to be a 2-cell of $X_2$ (after similar modifications at its other edges). Notice that the arcs $\{e_f \mid f \in X^2 \text{ contains } e\}$ separate $S_e$ into discs, and we also declare those discs to be 2-cells of $X_2$. The union of those discs and their boundary arcs is the \defi{banana} of $e$. We assume that the $S_e, e\in X_1^1$ are chosen small enough that the various bananas are disjoint except possibly at their vertices. 
We apply the $A_e$-truncation of $X_1$ for each $e\in X_1^1$ to obtain a new embedded 2-complex  $X_2$. We call $S_e$ the \defi{banana} of $e$.

\item \label{f iii} Finally, we blow each vertex $v\in X_2^0=X^0$ up like a \defi{pineapple}; that is, we pick a homeomorph $S_v$ of $\BS^2$ in $M$, \st\ one side $A_v$ of $S_v$ contains $v$ but no other vertices of $X_2$, and $S_v$ is adapted to $X_2$ and locally flat. Moreover, we choose the $S_v, v\in X_2^0$ small enough that they are pairwise disjoint. 
%$S_v \cap X$ is homeomorphic to the link graph of $v$ in $X$; in other words, each 2-cell $f\in X_2^2$ containing $v$ intersects $S_v$ along an arc $e_f$, and each edge $e$ of $v$ intersects $S_v$ at a single point $v_e$. Moreover, we choose these $S_v, v\in X^0$ small enough that they are pairwise disjoint. We obtain our final 2-complex $F(X)$ from $X_2$ by discarding the interior of each $S_v$, and making $S_v$ itself a subcomplex of $F(X)$, which we call the \defi{pineapple} of $v$. To make this more precise, we declare the set of points $\{v_e\mid v\in X_2^0, e\in X_2^1\}$ as above to be the 0-skeleton of $F(X)$. We declare each of the above arcs $e_f$ to be a 1-cell of $F(X)$. These points and arcs subdivide each $S_v, v\in X_2^0$ into topological discs, which we declare to be 2-cells of $F(X)$. Every 1-cell and 2-cell of $X_2$ is separated by the $S_v$, and we ...
We apply the $A_v$-truncation of $X_2$ for each $v\in X_2^0$ to obtain the desired 2-complex $F(X)$. We call $S_v$ the \defi{pineapple} of $v$.
\end{enumerate}

\begin{remark} \label{rem pine}
The 1-skeleton of the pineapple of $v$ can be obtained from the link graph of $v$ by doubling each edge by a parallel one, and then blowing up each vertex of the resulting plane graph into a cycle of  vertices of degree 3. 
\end{remark}

{\bf Example:} When $X$ is the cubic lattice in $\R^3$, the link graph of each vertex is isomorphic to the 1-skeleton of the octahedron. Each pineapple of $F(X)$ is a truncated cuboctahedron. They are arranged as shown in \fig{fig cuboct}.

\begin{remark} \label{rem flag}
There is an alternative, more abstract, way to define $F(X)$. A \defi{flag} of $X$ is a 4-tuple $(c_0,c_1,c_2,c_3)$ where $c_i$ is an $i$-cell of $X$, and $c_i$ is incident with $c_{i-1}$ for $1\leq i \leq 4$, with the convention that the 3-cells of $X\subset M$ are its chambers. We can identify the set of 0-cells of $F(X)$ with the set of flags of $X$. We connect two flags with an 1-cell of $F(X)$ whenever they differ in exactly one coordinate. We can 4-colour the 1-cells of $F(X)$ using the coordinate at which its end-vertices differ as a colour. The 2-cells of $F(X)$ are bounded by the 2-coloured cycles \wrt\ this colouring. Again \fig{fig cuboct} can serve as an example.  This definition generalises in any dimension. 

The letter $F$ in our notation $F(X)$ stands for `fruit', but also for `flag'.
\end{remark}

\medskip 

Using \vKT\ it will be easy to deduce that $F(X)$ preserves the fundamental group of $X$:
\begin{lemma} \label{lem pi}
For $F(X)$ as in \Dr{fruit cx}, we have $\pi_1(F(X))\isom \pi_1(X)$.
\end{lemma}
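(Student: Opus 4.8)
The plan is to track the fundamental group through the three stages of \Dr{fruit cx}, writing $X \to X_1 \to X_2 \to F(X)$ for the results of the mango, banana, and pineapple steps, and to show that each stage preserves $\pi_1$, repeatedly exploiting the fact that every blown-up sphere is simply connected. First I would dispose of the mango step by a presentation argument rather than \vKT: the complex $X_1$ has the same $1$-skeleton as $X$, and each $2$-cell $f$ of $X$ is replaced by two copies $f',f''$ carrying the same attaching loop $\partial f$. Since the fundamental group of a $2$-complex is $\pi_1$ of its $1$-skeleton modulo the normal closure of its attaching loops, and the only effect here is that this multiset of loops acquires duplicates, we get $\pi_1(X_1)\isom \pi_1(X)$.

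For the two truncation stages the $1$-skeleton changes, so here I would isolate a single van Kampen step and apply it once per banana and once per pineapple. Suppose $Y\subset M$ is an embedded $2$-complex and $S$ is a locally flat $2$-sphere adapted to $Y$, bounding a ball $A$, and let $Y_A$ be the $A$-truncation. Writing $P:=\overline{Y\setminus A}$, $Q:=Y\cap \overline{A}$ and $\Gamma:=Y\cap S$, we have $Y=P\cup Q$ and $Y_A=P\cup S$, with $P\cap Q=P\cap S=\Gamma$ in both decompositions. Provided $\Gamma$ is path-connected, \vKT\ gives
\[
\pi_1(Y)=\pi_1(P)*_{\pi_1(\Gamma)}\pi_1(Q), \qquad \pi_1(Y_A)=\pi_1(P)*_{\pi_1(\Gamma)}\pi_1(S).
\]
Since $S$ is a sphere and $Q$ is contractible (being the relevant part of a star of a cell), both $\pi_1(Q)$ and $\pi_1(S)$ are trivial, so each amalgamated product collapses to $\pi_1(P)$ modulo the normal closure of the image of $\pi_1(\Gamma)\to\pi_1(P)$. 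As $P$, $\Gamma$, and this map are the same for $Y$ and for $Y_A$, the two right-hand sides coincide, yielding $\pi_1(Y_A)\isom \pi_1(Y)$. Chaining this over all bananas and then all pineapples (processing the spheres one at a time, which is legitimate because distinct bananas, and distinct pineapples, are disjoint apart from shared vertices) gives $\pi_1(F(X))\isom\pi_1(X_2)\isom\pi_1(X_1)\isom\pi_1(X)$.

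What remains, and what I expect to be the bulk of the work, is verifying the two hypotheses of the van Kampen step for each truncating sphere: that $Q=Y\cap\overline A$ is simply connected, and that $\Gamma=Y\cap S$ is connected. For a banana around an edge $e=uv$, $Q$ is the union of $e$ with the half-discs in which the incident $2$-cells dip into $A_e$; this deformation retracts onto $e$, hence is contractible, while the arcs $f_e$ in which those $2$-cells meet $S_e$ all run between the two ends of $e$, so $\Gamma$ is connected. For a pineapple around a vertex $v$, $Q$ is the portion of the star of $v$ inside $A_v$, a cone on the link and hence contractible, while $\Gamma$ is a copy of the link graph $L(v)$ drawn on $S_v$ (cf.\ Remark~\ref{rem pine}); its connectedness is precisely local $1$-connectedness, which for generalised Cayley complexes is supplied by \Lr{lem 1-con}.

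The main obstacle is therefore this geometric bookkeeping rather than any group theory: checking adaptedness carefully enough that the identity $P\cap Q=P\cap S=\Gamma$ genuinely holds for each truncating sphere, and confirming that processing the spheres in sequence keeps the relevant intersection graphs connected at every intermediate stage, so that \vKT\ always applies with a path-connected overlap. Once these local verifications are in place, the three-stage chain above delivers $\pi_1(F(X))\isom\pi_1(X)$.
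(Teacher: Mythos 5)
Your proposal is correct and follows essentially the same route as the paper: a three-stage van Kampen argument through $X\to X_1\to X_2\to F(X)$ whose engine is the simple connectedness of each mango, banana and pineapple. The paper runs it in the reverse direction --- contracting each pineapple to a point to recover $X_2$, squeezing each banana back to an edge to recover $X_1$, and squashing each mango onto a disc to recover $X$ --- which is just a more compact packaging of your ``before/after over a common intersection'' comparison and sidesteps some of the bookkeeping you flag at the end.
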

%%%%%%%%%%%%%%%
\begin{proof}
Notice that if we contract each pineapple $S_v$ in the construction of $F(X)$ to a point, we obtain a 2-complex homeomorphic to $X_2$. Thus $\pi_1(F(X))\isom \pi_1(X_2)$ by \vKT. Similarly, squeezing each banana $S_e$ in $X_2$ back to an edge with the same endpoints as $e$, results into a 2-complex homeomorphic to $X_1$, and so $\pi_1(X_2)\isom \pi_1(X_1)$. Finally, squashing each mango $S_f$ of $X_1$ onto a disc with the same boundary as $f$ results in a homeomorph of $X$, yielding  $\pi_1(X_1)\isom \pi_1(X)$.
\end{proof}

In order to be able to use $F(X)$ to prove \Tr{free the action}, we need to construct it more carefully so that $\Gamma \act M$ extends to an action on $F(X)$. This would be easy if $\Gamma$ acted freely on $X$, but in general we need to take some care to ensure that the stabiliser of each 2-cell, 1-cell or 0-cell fixes the corresponding mango, banana, or pineapple, respectively. We will be able to achieve this by choosing a chamber $C$ of $X$ and using its closure $\cls{C}$ as a fundamental domain. More precisely, we will prove

\begin{lemma} \label{lem FD}
Let $X$ and $\Gamma$ be as in the statement of  \Tr{free the action}, let $C$ be a chamber of $X$, and let $D$ be the sub-complex of $X$ bounding $C$. Then there is a homeomorphic copy $F\subset M$ of $F(D)$, \st\ $\Gamma (F \cap \cls{C})$ is homeomorphic to $F(X)$.
\end{lemma}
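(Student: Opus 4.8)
The plan is to realise $F(X)$ \emph{equivariantly} by choosing all the spheres of \Dr{fruit cx} so that they form a single $\Gamma$-orbit-closed family, and then to recognise the part of this construction sitting over one chamber as the sought copy of $F(D)$, whose translates tile $F(X)$. The freeness and transitivity of $\Gamma\act M$ on chambers is what makes this bookkeeping go through.

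First I would record the consequences of regularity on chambers. Fixing the chamber $C$, the map $g\mapsto gC$ is a bijection from $\Gamma$ onto the set of chambers, so $\cls{C}$ is a fundamental domain: $M=\bigcup_{g\in\Gamma}g\cls{C}$, and for $g\neq e$ the translates meet only along cells of $X$, namely $g\cls{C}\cap\cls{C}=\partial C\cap\partial(gC)=D\cap gD$ where $D=\partial C$. Since $C$ is finitary, $D$ is a finite subcomplex and $F(D)$ is a finite fruit complex. I would also note that, because \Dr{fruit cx} only requires the spheres $S_f,S_e,S_v$ to be locally flat, adapted (conditions \ref{c i}--\ref{c iii}) and pairwise disjoint off $X^1$, any valid choice yields a homeomorphic $F(X)$; hence we are free to insist on a convenient, symmetric choice.

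Second, I would choose the mango/banana/pineapple spheres as a $\Gamma$-equivariant family, meaning $S_{gc}=gS_c$ for every cell $c$ of $X$ and every $g\in\Gamma$. To do this I pick one representative $c$ in each $\Gamma$-orbit of cells and choose $S_c$ invariant under its stabiliser $\Gamma_c$; as $\Gamma\act M$ is \pd\ and $c$ is compact, $\Gamma_c$ is finite, and inside a $\Gamma_c$-invariant neighbourhood of $c$ one can select a $\Gamma_c$-invariant, locally flat sphere that is adapted to the relevant intermediate complex and small enough that the whole family is pairwise disjoint. (Here $\Gamma_c$ may be nontrivial: for a $2$-cell it is trivial or flips the two sides, for an edge or vertex it freely permutes the incident chambers, by freeness on chambers.) Setting $S_{gc}:=gS_c$ is then well defined precisely because $S_c$ is $\Gamma_c$-invariant. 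Running the three blow-up/truncation steps of \Dr{fruit cx} with this equivariant data produces an embedded complex $F(X)$ on which $\Gamma\act M$ restricts to an action, since each truncation commutes with the homeomorphisms $g$ (as $A_{ge}=gA_e$ and $S_{ge}=gS_e$).

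Third, I would localise to $\cls{C}$. Using the flag description of Remark~\ref{rem flag}, every vertex of $F(X)$ is a flag $(c_0,c_1,c_2,c_3)$ whose chamber coordinate satisfies $c_3=gC$ for a \emph{unique} $g\in\Gamma$; write $F(X)_\chi\subseteq\cls{\chi}$ for the flags with chamber coordinate $\chi$ together with the inward halves of the surrounding spheres. By equivariance $F(X)_{gC}=g\,F(X)_{C}$, and by construction $F(X)_{C}=F\cap\cls{C}$, where $F$ is the copy of $F(D)$ obtained by running \Dr{fruit cx} on $D$ with the restricted data and with $C$ as the inside. Hence $F(X)=\bigcup_{g}F(X)_{gC}=\bigcup_{g}g\,(F\cap\cls{C})=\Gamma(F\cap\cls{C})$. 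The translates glue into a complex with no spurious boundary: across a $2$-cell $f$ shared by $C$ and a neighbour $gC$, the inward mango-half from $F\cap\cls{C}$ and the half contributed by $g(F\cap\cls{C})$ are exactly the two sides $f',f''$ of $S_f=gS_{g^{-1}f}$, and the analogous matching holds for bananas and pineapples; this is the only place the identity $S_{gc}=gS_c$ is needed.

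The main obstacle is the second step: arranging the fruit spheres to be simultaneously adapted to $X$ in the sense of \ref{c i}--\ref{c iii}, locally flat, pairwise disjoint, and genuinely $\Gamma$-equivariant, which forces stabiliser-invariance on each orbit representative even when a finite stabiliser flips a $2$-cell or cyclically permutes the chambers around an edge or vertex. Once the data are equivariant, the identification in the third step is routine, and the freeness of $\Gamma\act M$ on chambers guarantees that the induced action on flags is fixed-point free, so no twisting obstruction arises when matching translates across $D$.
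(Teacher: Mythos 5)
Your overall strategy --- make the fruit data equivariant over a fundamental domain $\cls{C}$ and let the $\Gamma$-translates tile $F(X)$ --- is the same as the paper's, and your third step (the flag/fundamental-domain bookkeeping) matches the paper's concluding argument. But the step you yourself flag as ``the main obstacle'' is exactly where the proof has to do real work, and you do not resolve it: you assert that for each orbit representative $c$ one can ``select a $\Gamma_c$-invariant, locally flat sphere that is adapted'' to the relevant complex. For a $2$-cell this stabiliser is at most $\Z_2$ (it permutes the at most two incident chambers, and the action is free on chambers), but for an edge or a vertex $\Gamma_c$ can be an arbitrary finite group acting only by homeomorphisms near $c$, and the existence of an invariant locally flat adapted $2$-sphere around such a cell is a genuinely nontrivial claim --- invariant spheres need not exist for wild topological actions (cf.\ Bing-type involutions), and even in the smooth case one would need an equivariant tubular-neighbourhood or equivariant-triangulation argument that you do not supply. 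As stated, your second step is a gap, not a construction.

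The paper avoids demanding equivariance of the spheres themselves. It observes that the translates $g\cls{C}$ meet only along the $2$-cells of $D$, so all that must be $\Gamma$-invariant is the family of \emph{traces} of the bananas and pineapples on those $2$-cells --- a system of arcs (the slice patterns of \Dr{def pattern}). Since the stabiliser of a $2$-cell is generated by a single involution $h_f$, \Lr{sl pat} produces an $h_f$-invariant slice pattern explicitly, and this is propagated over each $\Gamma$-orbit. The portions of the bananas and pineapples lying in the interiors of chambers are then chosen arbitrarily; the translates glue along the invariant arc systems into topological spheres, yielding a homeomorph of $F(X)$ without ever needing $S_{gc}=gS_c$. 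If you want to salvage your version, either prove the invariant-sphere claim for edge and vertex stabilisers (hard in the topological category), or weaken your requirement to invariance of the intersections with $D^2$ only, which reduces the problem to the $\Z_2$-equivariant $1$-dimensional statement the paper actually proves.
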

Here, $\Gamma A$ denotes the image of a set $A\subset M$ under the action $\Gamma \act M$, and $F(D)$ is given by \Dr{fruit cx}.

Before proving \Lr{lem FD}, let us see how it implies \Tr{free the action}.

\begin{proof}[Proof of \Tr{free the action}]
Given $X$ as in the statement, we construct $F(X)$ as in \Dr{fruit cx}. By \Lr{lem pi}, $F(X)$ is \sico\ since $X$ is. Easily, $F(X)$ has finitary chambers since $X$ does.

Let $D$ be the sub-complex of $X$ bounding a chamber $C$ of $X$. Then \Lr{lem FD} yields an embedded 2-complex $F\subset M$ \st\ $F':=\Gamma (F \cap \cls{C})$ is homeomorphic to $F(X)$. %Let $\phi: F(X) \to F'$ be such a homeomorphism, and notice that $\phi$ is an embedding of  $F(X)$ into $M$. 
Notice that $F'$ is $\Gamma$-invariant by definition.

It remains to check that the action of $\Gamma$ on $V(F')$ is free. This is true because each vertex of $F'$ lies in the interior of a chamber of $X$, and $\Gamma$ acts freely on the chambers of $X$ by assumption.

\comment {If $f\in X^2$ is fixed pointwise by some non-trivial element of $\Gamma$, then it is contained in a Smith reflecting sphere $S$. Blow $f$ up like a mango, i.e.\ replace $f$ by two parallel copies $f',f''$ on either side of $S$ %that are mapped to each other by the reflection of $S$, 
so that the chamber they enclose (the \defi{mango}) is disjoint from $X$, and none of $f',f''$ is contained in a Smith reflecting sphere.

The resulting 2-complex $X_1$ has no pointwise-fixed 2-cells. 
%By choosing $f',f''$ more carefully, we can also ensure that none of them contains a sub-arc of a Smith circle, and therefore no 2-cell of $X_1$ is 

Note that this implies that no non-trivial element  of $ \Gamma$ fixes an edge $e\in X_1^1$ pointwise and at the same time  maps a 2-cell incident with $e$ to itself.

If $e\in X_1^1$ is fixed pointwise by some element of $\Gamma$, blow it up like a banana, none of the `edges' of which banana is contained in a Smith sphere or circle, and none of whose `sides' is contained in a Smith sphere. 

The resulting 2-complex $X_2$ has no pointwise-fixed 1-cells and still no pointwise-fixed 2-cells. 

Next, blow up each vertex $v$ like a sphere $K_v$. New vertices/edges are thereby formed on $K_v$, creating a graph isomorphic to the link of $v$ in $X_2$. Let $X'$ denote the resulting 2-complex. Note that all vertices of $X'$ lie on some $K_v, v\in X^0$. Since no 1-cell of  $X_2$ is fixed pointwise by $\Gamma$, no vertex of  $X'$ is fixed either.
\medskip

It is easy to see that $X'$ retains the simple connectedness of $X$. We now make the construction of  $X'$ more careful, so that its embedding is $\Gamma$-equivariant. Recall that we replaced some 2-cells $f$ by 2-spheres of the form  $f' \cup f''$. We claim that we can choose $f' ,f''$ in such a way that $Stab_\Gamma(f)$ --- i.e.\ the set-wise stabiliser of $f$ in $\Gamma$--- fixes $f' \cup f''$ (as a set). Indeed, Let $C$ be the chamber of 
of $f' \cup f''$ containing $f$, and let $K:= \bigcap_{g\in Stab_\Gamma(f)} (g\cdot C)$ be the intersection of its orbit under $Stab_\Gamma(f)$. It is not hard to see that $K$ is homeomorphic to an open 3-ball in $M$, and that it is fixed by $Stab_\Gamma(f)$. Its boundary $\partial K$ is homeomorphic to $\BS^2$, and so we can replace $f' \cup f''$ by $\partial K$ (letting $\partial f$ disect $\partial K$ into two 2-cells). For each 2-cell $h = a \cdot f$ for some  $a \in \Gamma $, we also replace $h' \cup h''$ by $a \cdot \partial K$. This ensures that the embedding of $X_1$ is $\Gamma$-equivariant.

By applying the same argument to the bananas and the $K_v$'s instead of the mangos, we can also ensure that the embedding of $X'$ is $\Gamma$-equivariant. }
\end{proof}

It remains to prove \Lr{lem FD}. To construct the desired copy $F$ of $F(D)$, we will first design the intersection of $F$ with each 2-cell of $F(D)$. To do so, we need to remember how the bananas and pineapples of $F(X)$ intersect each 2-cell of $X$ (the mangos do not); these intersections are described in the following definition, but they are easier to see in \fig{fig sl pat}. 

\begin{figure} \begin{center}
\includegraphics[width=0.35\linewidth]{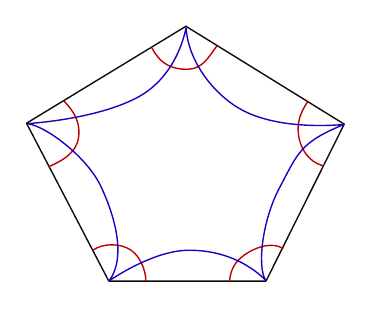} 
\end{center}
\caption{A topological 5-gon with a slice pattern. The $E_i$ are depicted in red (if colour is shown), and the $V_i$ in blue.} \label{fig sl pat} 
\end{figure}

A \defi{topological $n$-gon} is a regular 2-complex $P$ containing exactly one 2-cell $f$, and \st\ $P^1$ is homeomorphic to $\BS^1$ (and coincides with the boundary of $f$). 

\begin{definition} \label{def pattern}
Let $P$ be a \defi{topological $n$-gon}, with vertices $v_1,\ldots v_n$, and edges $v_i v_{i+1\pmod n}$. A \defi{slice pattern} on $P$ consists of two sets $\ce:= \{E_1, \ldots, E_n\}$ and $\cv:= \{V_1, \ldots, V_n\}$ of arcs on $P$, \st\ (\fig{fig sl pat})
\begin{enumerate}
\item \label{sp i} the end-points of $E_i$ are $v_i$ and $v_{i+1\pmod n}$;
\item \label{sp ii} the end-points of $V_i$ are interior points of the  edges $v_{i-1\pmod n} v_i$ and $v_i v_{i+1\pmod n}$;
\item \label{sp iii} each $E_i$ and $V_i$ meets the 1-skeleton of $P$ at its end-points only;
\item \label{sp iv} the elements of $\ce$ are pairwise disjoint, and so are the elements of $\cv$, and
\item \label{sp v} $V_i$ is disjoint from $E_j$ unless $j=i$ or $j=i-1\pmod n$.
\end{enumerate}
\end{definition}

An \defi{automorphism} of a topological $n$-gon $P$ is a homeomorphism of $P$ mapping each vertex to a vertex (and hence each edge to an edge). To prove  \Lr{lem FD} we will apply the following lemma to each 2-cell of $D$; this helps us by pushing the difficulty one dimension down.

\begin{lemma} \label{sl pat}
Let $P$ be a topological $n$-gon, and $h: P \to P$ an automorphism \st\ $h^2$ is the identity. Then there is a slice pattern $(\ce,\cv)$ of $P$ preserved by $h$. That is, $h$ maps each element of $\ce$ to an element of $\ce$, and each element of $\cv$ to an element of $\cv$.
\end{lemma}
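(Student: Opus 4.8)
The plan is to reduce the whole problem to an explicit linear model and then draw the arcs by hand. Since $P$ is homeomorphic to the closed disc and $h^2=\mathrm{id}$, the map $h$ is a periodic homeomorphism of the disc, so by the classical theorem of Kerékjártó it is topologically conjugate to an orthogonal involution. Concretely, I would fix a homeomorphism $\psi\colon P\to \D$ onto the standard closed unit disc $\D\subset\R^2$ \st\ $g:=\psi h\psi^{-1}$ is the restriction to $\D$ of an orthogonal map of $\R^2$ (the identity, the rotation by $\pi$, or a reflection, according to whether $h$ is trivial, orientation-preserving, or orientation-reversing). Writing $p_i:=\psi(v_i)$, the points $p_1,\dots,p_n$ lie on $\partial\D=\BS^1$ in cyclic order (after relabelling if $\psi$ reverses orientation on the boundary), and since $h$ permutes the vertices of $P$, the isometry $g$ permutes $\{p_1,\dots,p_n\}$; being an isometry of the circle it sends cyclically adjacent pairs to cyclically adjacent pairs. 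It therefore suffices to build a $g$-invariant slice pattern on $\D$ with marked boundary points $p_i$ and then pull it back by $\psi^{-1}$, since a homeomorphism carries slice patterns to slice patterns and conjugacy turns $g$-invariance into $h$-invariance.

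On the standard disc the construction is completely explicit. For $\ce$ I would take $E_i$ to be the straight chord of $\D$ joining $p_i$ to $p_{i+1\bmod n}$; these are the edges of the convex polygon inscribed in $\BS^1$, so distinct chords meet only in common endpoints and each open chord lies in the interior of $\D$, which gives conditions \ref{sp i}, \ref{sp iii} and \ref{sp iv} for $\ce$. For $\cv$ I would fix a single small radius $\rho>0$ and let $V_i$ be the arc of the circle of radius $\rho$ centred at $p_i$ that lies inside $\D$ (see \fig{fig sl pat}). For $\rho$ sufficiently small the two intersection points of this circle with $\BS^1$ are interior points of the two edges of $P$ meeting at $p_i$ (condition \ref{sp ii}), the arc $V_i$ meets $\partial\D$ only at these endpoints (condition \ref{sp iii} for $\cv$), it crosses exactly the two chords $E_{i-1},E_i$ emanating from $p_i$ and no others (condition \ref{sp v}), and the $V_i$ are pairwise disjoint (condition \ref{sp iv} for $\cv$). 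Because $g$ is orthogonal and permutes the $p_i$ compatibly with the cyclic order, it maps the chord between $p_i,p_{i+1}$ to another such chord and the radius-$\rho$ circle about $p_i$ to the radius-$\rho$ circle about $g(p_i)$; hence $g$ permutes $\ce$ and permutes $\cv$, i.e.\ the pattern is $g$-invariant. Here it is essential that all the $V_i$ are built with the \emph{same} radius $\rho$.

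Finally I would set $\ce:=\{\psi^{-1}(E_i)\}$ and $\cv:=\{\psi^{-1}(V_i)\}$. This is a slice pattern of $P$, because the homeomorphism $\psi^{-1}$ sends $p_i$ to $v_i$ and boundary arcs to edges, and all of conditions \ref{sp i}--\ref{sp v} are topological incidence/disjointness statements that are preserved under $\psi^{-1}$; and it is preserved by $h=\psi^{-1}g\psi$ precisely because the downstairs pattern is preserved by $g$. The only genuinely nontrivial ingredient is the reduction in the first paragraph: I expect the main obstacle to be invoking Kerékjártó's theorem correctly for the closed disc, together with the observation that \emph{no} control over the positions of the marked points $p_i$ is needed, since the explicit construction adapts to whatever boundary configuration they happen to land in. Everything after that is routine planar geometry. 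If one wished to avoid this theorem, the same pattern could in principle be built directly on a fundamental domain of the $h$-action on $\partial P$ and extended across the fixed-point set, but tracking the vertices and edges fixed by the involution makes that route noticeably more delicate.
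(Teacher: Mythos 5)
Your proof is correct, but it takes a different route from the one in the paper. The paper's argument is a quotient-and-lift: it observes that a non-trivial involution $h$ of the disc must leave invariant an arc $A$ joining two boundary points and swap the two components of $P - A$ (this covers both the reflection case, where $A$ is fixed pointwise, and the $\pi$-rotation case, where $A$ is a setwise-invariant ``diameter''), then picks half a slice pattern on the quotient half-polygon $P/h$ and lifts it. You instead linearise globally via Ker\'ekj\'art\'o's theorem, conjugating $h$ to an orthogonal involution $g$ of the standard disc, and then write down an explicitly $g$-invariant pattern: the inscribed chords for $\ce$ and equal-radius circular arcs about the marked points for $\cv$. Both arguments ultimately rest on the same structural fact (the topological classification of involutions of $\D^2$), but yours makes every incidence and disjointness condition of Definition~\ref{def pattern} checkable by elementary Euclidean geometry, whereas the paper leaves the verification that the lifted pattern satisfies \ref{sp i}--\ref{sp v} implicit; the price is an explicit appeal to Ker\'ekj\'art\'o, which the paper avoids naming. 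Your observation that all the $V_i$ must use the same radius $\rho$ is exactly the right point to flag, and your remark that no control over the positions of the $p_i$ is needed is what makes the reduction painless. One cosmetic note: consecutive chords $E_i, E_{i+1}$ share the endpoint $p_{i+1}$, so condition \ref{sp iv} can only be meant as ``internally disjoint''; your construction satisfies this intended reading, as does the paper's.
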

%%%%%%%%%%%%%%%
\begin{proof}%[Proof of \Lr{lem FD}]
Notice that $h$ must fix some arc $A$ joining two boundary points of $P$, and exchange the two components into which $A$ separates $P$. We can thus pick `half' a slice pattern on the quotient polygon $P/h$, and lift it back to $P$ to obtain a slice pattern of $P$. We have assumed here that $h$ in not the identity, in which case the statement is trivial.
\end{proof}

%%%%%%%%%%%%%%%
\begin{proof}[Proof of \Lr{lem FD}]
Notice that although $\Gamma\act M$ is free on the chambers of $X$, some of the 2-cells of $D$ may have a non-trivial stabiliser in $\Gamma$. However, for every such 2-cell $f\in D^2$, there is at most one non-identity element $h_f$ of $\Gamma$ fixing $f$, because $f$ is in the boundary of at most two chambers of $X$, and no non-identity element of $\Gamma$ fixes a chamber. For the same reason, $h_f$ must be an involution. Applying \Lr{sl pat} with $P=\cls{f}$ we obtain 
a slice pattern $(\ce,\cv)$ of $\cls{f}$ preserved by $h_f$. Choosing such a slice pattern for one representative $f$ of each $\Gamma$-orbit of 2-cells of $D$, and translating it to the other representatives via the action of $\Gamma$, we obtain a family $(\ce_f,\cv_f)_{f\in D^2}$ of slice patterns of all 2-cells of $D$, which family is \defi{compatible} with our action $\Gamma\act M$, i.e.\ $(\ce_f,\cv_f)=(\ce_{gf'},\cv_{gf'})$ whenever $gf'=f$ for some $f,f'\in D^2$ and some $g\in \Gamma$. Notice that this implies that the set of $\Gamma$-translates of this family is $\Gamma$-invariant.

To find the desired copy $F$ of $F(D)$, we can start by picking the mangoes arbitrarily as in \Dr{fruit cx} \ref{f i}. (The half-mango outside $\cls{C}$ will be irrelevant.) For each $e\in D^1$, pick the corresponding banana $S_e$ (\Dr{fruit cx} \ref{f ii}) so that its intersection with each 2-cell of $D$ is contained in one of the $\ce_f$ of the above family of slice patterns. Similarly, for each $v\in D^0$, pick the corresponding pineapple $S_v$ (\Dr{fruit cx} \ref{f iii}) so that its intersection with each 2-cell of $D$ is contained in one of the $\cv_f$. This completes the construction of $F$.

It remains to check that $\Gamma (F \cap \cls{C})$ is homeomorphic to $F(X)$ as claimed. To see this, notice that  $F \cap \cls{C}$ contains one half of each mango, a slice of each banana, and a sector of each pineapple of $F$. Moreover, when acted upon by $\Gamma$, these portions combine well to produce a homeomorph of $F(X)$. Indeed, 
for any $g,h\in \Gamma$, the translates $g(F \cap \cls{C})$ and $h(F \cap \cls{C})$ are disjoint except possibly at the boundaries of $g \cls{C}$ and $h \cls{C}$, where they meet along the $\Gamma$-invariant family of slice patterns %, namely the set of translates of the family $(\ce_f,\cv_f)_{f\in D^2}$ 
chosen above. 

%...Let $H\subset M$ be an embedded 2-complex, and $\Gamma\act M$ an action compatible with $H$. ...
\end{proof}

\comment{
	\begin{corollary} \label{cor regular}
In \Cr{cor free the action}, we may in addition assume that the action of $\Gamma $ on $V(F(X))$ is regular. \mymargin{perhaps we will join this into \Cr{cor free the action}.}
\end{corollary}
%%%%%%%%%
	\begin{proof}

	\end{proof}
}

We are now ready to complete the main result of this section.

\begin{proof}[Proof of Theorems \ref{i to ii} and \ref{cor i to ii}]
Apply \Lr{action to emb com} to obtain an embedded 2-complex $X\subset M$ \st\ $\pi_1(X)\isom \pi_1(M)$ and $X$ satisfies all the assumptions of \Tr{free the action}. % $\Gamma$ acts regularly on the chambers of $X$. 
Then apply \Tr{free the action} to this $X$, %and apply a Babai-style contraction...
to turn it into an embedded 2-complex $F(X)$ \st\  the action of $\Gamma$ on $V(F(X))$ is free. 

Suppose that  the action of $\Gamma$ on $V(F(X))$ is not transitive. Let $D\subset F(X)^1$ be a connected fundamental domain for the action as provided by \Lr{lem Babai}. Notice that $D$ is finite because $\Gamma \act M$ is co-compact. Let $T$ be a spanning tree of $D$, and let $U$ be an open neighbourhood of $T$ in $M$ homeomorphic to $\R^3$.  Easily, we can pick $U$ small enough that its $\Gamma$-translates are pairwise disjoint. Contracting each $\Gamma$-translate of $U$ into a point, we obtain a manifold homeomorphic to $M$, still acted upon by $\Gamma$, into which manifold the 2-complex $Y:= F(X)/T$ is embedded $\Gamma$-invariantly. These contractions preserve the property that all chambers are finitary. By the choice of $T$, the action of $\Gamma$ on $V(Y)$ is regular. Moreover, $\pi_1(Y) \isom \pi_1(F(X)) \isom \pi_1(X)\isom \pi_1(M)$. In particular, when $M$ is \sico, then $Y$ is a \gcc\ of $\Gamma$.
\end{proof}

Notice that when $M$ is \sico\ in \Tr{cor i to ii}, then it is a special 3-manifold by \Tr{Agol}. Thus we have proved the implication  $\ref{m i}\rightarrow \ref{m iv}$ of Theorems~\ref{main thm} and \ref{inf thm}.

\begin{remark}
Since $X$, and hence $F(X)$, are regular 2-complexes by construction, the 2-complex $Y$ that the above proof provides is edge-regular (as defined in \Sr{ccs}). We do not know if we can always obtain a regular $Y$ in Theorems \ref{i to ii} and \ref{cor i to ii}.
\end{remark}

\section{Concluding remarks} \label{conc}

Our proofs of Theorems~\ref{main thm} and \ref{inf thm} are now complete. The implication $\ref{m iv}\rightarrow \ref{m i}$ is trivial because every special 3-manifold is \sico. The implications $\ref{m iv}\rightarrow \ref{m iii} \rightarrow \ref{m ii}$ are trivial because every special 3-manifold embeds in $\BS^3$. The implications \ref{inf ii} $\to$ \ref{inf iv}  and 
 $\ref{m i}\rightarrow \ref{m iv}$ have been proved in Sections~\ref{sec ii to iv} and \ref{sec i to iv}, respectively.

%and the implications $\ref{m i}\rightarrow \ref{m ii}$, $\ref{m ii}\rightarrow \ref{m iii}$, and $\ref{m iii}\rightarrow \ref{m iv}$ follow from \Trs{i to ii}, \ref{JCtheorem}, and \ref{complex to action}, respectively.

\bigskip

As mentioned in the introduction, there are groups admitting a   \Cc\ embeddable in $\R^3$ with invariant  \prs, but only if we allow infinite pre-chambers. Examples include $\Z^2$, and more generally any fundamental group of an orientable closed surface \cite{SEsurfacegroups}.

\begin{question} \label{Q prs}
Which infinite groups admit a Cayley complex embeddable in $\R^3$ with invariant \prs?
\end{question}

This class of groups contains the fundamental groups of  closed 3-manifolds (as proved by \Tr{main thm}) and closed surfaces; more generally, it contains all Kleinian function groups \cite[\S 12]{Kleinian}\footnote{This is not stated explicitly in \cite{Kleinian}, but the proof of Theorem 1.3 there constructs an embedding of a \Cc\ in $\R^3$.}. It is easy to see that it also  contains groups of the form $F \times \Z$ where $F$ is free. It would be interesting to clarify the relationship between the groups of Question~\ref{Q prs} and the Kleinian groups.

\begin{question} \label{Q list}
Is there a finite set $X$ of 3-manifolds, such that each of the  groups of Question~\ref{Q prs} admits a discrete action on an element of $X$?
\end{question}

This $X$ should contain the three special open 3-manifolds of \Tr{Agol}. It  should also contain  $\R^2 \times S^1$, because of $\Z^2$ and other surface groups. Moreover, $X$ should contain $C  \times \R$ where $C$ stands for the Cantor 2-sphere, i.e. $\BS^2$ with a Cantor set removed; we include $C  \times \R$ to let groups of the form $F \times \Z$ act. These 5 manifolds could suffice as far as we can tell.

\medskip
One could enquire more generally about the class of groups admitting a Cayley complex embeddable in $\R^3$ with no further restrictions, though we do not expect an easy alternative description. An example of such a group %that has a \Cc\ embeddable in $\R^3$ ---with a \prs\ that is not invariant, and has infinite pre-chambers--- but is not one of the groups of \Tr{inf thm} 
is the Baumslag--Solitar group $BS(1,2)$. Its standard \Cc\ defined by $\left< a,b \mid ba b^{-1}=a^2 \right>$ embeds in the cartesian product of a binary tree and $\R$, as well-known figures show. It has been proved that $BS(1,2)$ cannot be mapped in a nondegenerate way into the fundamental group of an orientable 3-manifold \cite{ShaThr}, and so it  is not one of the groups of \Tr{inf thm}. 

\section{Appendix: The universal covers of 3-manifolds and orbifolds} \label{sec App}

In this section we provide a proof of \Tr{Agol}. We emphasize that our only contribution to this proof was to ask the experts about it and put the pieces together.
\medskip 

%A homeomorphic image $C$ of the Cantor set in $\BS^3$ is called \defi{tame}, if it is contained in a piecewise linear arc. It is known that if $C'$ is another tame Cantor set in $\BS^3$, then $\BS^3 - C$ is homeomorphic to $\BS^3 - C'$, see \cite{SkoCan} and references therein. A topological space homeomorphic to $\BS^3 - C$ will be called a \defi{\CtS}.

For our proof of \Tr{Agol}, and also in \Sr{sec i to iv}, we make use of Pardon's theorem that topological actions on a 3-manifold can be smoothed:

\begin{theorem}[\cite{Pardon21, PardonMO}] \label{pardon new}
Every \pd\ action of a finitely generated group $\Gamma$ on a 3-manifold $M$ by homeomorphisms is the uniform limit of smooth actions of $\Gamma$ on $M$.
\end{theorem}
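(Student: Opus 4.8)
The plan is to follow the local-to-global strategy of smoothing theory, exploiting the special features of dimension three. First I would reduce the global problem to a purely local one near the singular strata of the action. By proper discontinuity, every point $x\in M$ has a finite stabiliser $\Gamma_x$ and a $\Gamma_x$-invariant neighbourhood $U_x$ whose $\Gamma$-translates are disjoint except as dictated by $\Gamma_x$; finite generation together with this local structure lets me assemble such sets into a $\Gamma$-invariant cover. Smoothing the action then amounts to producing, in a $\Gamma$-equivariant and uniformly controlled way, a smooth structure near each orbit type together with compatible smoothings of the transition regions. Proper discontinuity supplies the uniform local control needed for the eventual limit to be uniform (and in the cocompact applications of interest here $M/\Gamma$ has only finitely many orbit types, so it suffices to treat one neighbourhood per orbit type and spread the result by the $\Gamma$-action).

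Second, I would establish the local model near a point with non-trivial finite stabiliser $G=\Gamma_x$. The key input is that in dimension three a topological action of a finite group is locally linear: the fixed-point sets are tame, \lfl\ submanifolds, and on a small invariant ball the action is conjugate to an orthogonal $G$-action on $\R^3$. This linearisation, which rests on the resolution of the Smith conjecture and three-dimensional P.\,A.\ Smith theory (ultimately on the \GeThm), supplies a genuine smooth equivariant model in each such neighbourhood, and one that is $C^0$-close to the original chart.

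Third, in the free part of the action the problem is that of ordinary smoothing theory. Here I would invoke Moise's theorem that every topological $3$-manifold carries a smooth structure, together with the Kirby torus trick and the Edwards--Kirby--\v{C}ernavskii local contractibility of the homeomorphism group, to approximate the transition homeomorphisms by diffeomorphisms on the overlaps. The decisive three-dimensional input for the patching is Hatcher's Smale conjecture, which gives that $\mathrm{Diff}(D^3\ \mathrm{rel}\ \partial)$ is contractible; this is what guarantees that local smoothings can be interpolated across balls without obstruction, so that the finitely many choices made per orbit type glue to a globally defined smooth structure and a smooth action $C^0$-close to the original. Since the $C^0$-control can be taken arbitrarily small, iterating yields a sequence of smooth actions converging uniformly to the given one.

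The hard part, and the real content of the theorem, is the equivariant gluing across the singular strata: one must interpolate between the linear smooth model near a fixed point and the smoothing of the free region in a way that simultaneously respects the $\Gamma$-action, stays uniformly close to the identity so that the approximating actions converge uniformly, and incurs no obstruction. Making all three compatible is where the contractibility statements (Smale conjecture, local contractibility of $\mathrm{Homeo}$) must be applied \emph{equivariantly}, and where the bulk of the technical work lies. I would organise this inductively over a handle decomposition of $M/\Gamma$ adapted to the orbit-type stratification, smoothing one equivariant handle at a time and using the contractibility inputs to guarantee that each extension exists and can be chosen uniformly small.
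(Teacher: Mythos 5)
Your proposal cannot be compared against an in-paper argument, because the paper does not prove \Tr{pardon new} at all: it is imported as a black box from Pardon \cite{Pardon21}, with the extension from finite groups to \pd\ actions of finitely generated groups taken from \cite{PardonMO}. Judged on its own merits, your outline has a fatal gap at its second step. You assert that in dimension three a topological action of a finite group is locally linear, with tame, \lfl\ fixed-point sets, and conjugate on a small invariant ball to an orthogonal action on $\R^3$. This is false, and its failure is precisely the content of the theorem: Bing (1952) constructed an involution of $\BS^3$ whose fixed-point set is a wildly embedded $2$-sphere, and Montgomery--Zippin and Alford produced further wild, non-smoothable examples (the authors of this paper allude to exactly these in a commented-out passage of their source). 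The Smith conjecture you invoke applies to smooth actions, and classical Smith theory gives only homological control of fixed sets, never tameness; so citing them to linearise a merely topological action is circular --- it assumes locally the smoothness one is trying to produce. Worse, your scheme proves too much: if the local orthogonal models existed and your equivariant gluing went through, you would obtain a global invariant smooth structure making the original action smooth, i.e.\ every topological action would be conjugate to a smooth one, contradicting Bing's example. This is why the theorem is phrased as a \emph{uniform limit}: the approximating smooth actions are genuinely different actions, in general not conjugate to the given one, and no argument that smooths the given action near its singular set can succeed.

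Two smaller points. Your treatment of the free part is essentially fine, but overbuilt: a free \pd\ action has a $3$-manifold quotient, which Moise smooths, and Bing--Moise approximation of homeomorphisms by diffeomorphisms then handles that region without any appeal to the torus trick or to Kirby--Siebenmann machinery. And you correctly sense that Hatcher's theorem on $\mathrm{Diff}(D^3)$ is a decisive input; in Pardon's actual proof it is indeed used, but inside an argument organised around the wildness obstruction --- building smooth approximations without ever assuming a tame local model --- rather than inside a local-linearisation-and-patching scheme of the kind you describe.
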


We will also need the following consequence of the Orbifold theorem of Boileau, Leeb \& Porti:

\begin{theorem}[{\cite[COROLLARY 1.3]{BoLePoGeo}}] \label{orbi thm}
Every compact connected 3-orbifold which does not contain any bad 2-suborbifolds is the quotient of a compact  3-manifold by a finite group action. \mymargin{\small their 3-orbifold and manifold are oriented, but this can be dropped because a non-orientable orbifold is doubly-covered by an orientable one; and we can make $O$ orientable by using the orientation preserving subgroup of $\Gamma$.}
\end{theorem}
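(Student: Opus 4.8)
The plan is to import the deep geometric content---geometrization for orbifolds---as a black box and to reduce the stated (unoriented) version to the oriented version proved by Boileau--Leeb--Porti. The key reformulation I would use is that a compact $3$-orbifold is \emph{very good} (a finite quotient of a manifold by a finite group) precisely when its orbifold fundamental group admits a finite-index, torsion-free, \emph{normal} subgroup. For the oriented case the statement is exactly \cite[Corollary~1.3]{BoLePoGeo}: an oriented compact connected $3$-orbifold without bad $2$-suborbifolds is a quotient $N/H$ of a compact oriented $3$-manifold by a finite group. Writing $O$ for the orbifold, $U$ for its universal orbifold cover, and $\Pi := \pi_1^{\mathrm{orb}}(O)$ for the deck group (so $O = U/\Pi$), this says that $U$ is a manifold and that $\Pi$ has a finite-index torsion-free normal subgroup, namely $\pi_1(N)\trianglelefteq \Pi$.

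First I would dispose of the orientable case by citing \cite[Corollary~1.3]{BoLePoGeo} directly. For a non-orientable $O$, let $\Pi^{+} \leq \Pi$ be the index-two subgroup that is the kernel of the orientation character $\Pi \to \Z_2$, and let $\hat{O} := U/\Pi^{+}$ be the associated orientation double cover. Then $\hat{O}$ is orientable, connected, and compact (being a finite cover of the compact $O$), and it contains no bad $2$-suborbifolds, because a bad $2$-suborbifold of $\hat{O}$ would cover a bad $2$-suborbifold of $O$, contrary to hypothesis. Hence \cite[Corollary~1.3]{BoLePoGeo} applies to $\hat{O}$: its universal orbifold cover---which coincides with $U$, as $\hat{O} \to O$ is itself a covering---is a manifold, and $\Pi^{+}$ contains a finite-index torsion-free normal subgroup $K \trianglelefteq \Pi^{+}$.

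The remaining step is a routine normal-core argument to promote $K$ from $\Pi^{+}$ to all of $\Pi$. Set $K' := \bigcap_{g \in \Pi} g K g^{-1}$. Since $[\Pi : \Pi^{+}] \leq 2$, this is an intersection of at most two finite-index subgroups of $\Pi$, so $K'$ has finite index in $\Pi$; it is normal in $\Pi$ by construction, and torsion-free because it is contained in the torsion-free group $K$. As $K'$ is torsion-free it acts freely and properly discontinuously on the manifold $U$, so $M := U/K'$ is a manifold, and being a finite cover of the compact orbifold $O$ it is compact (in fact orientable, since $K' \leq \Pi^{+}$). Finally $G := \Pi/K'$ is finite and acts on $M$ with $M/G = U/\Pi = O$, which is the desired conclusion. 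The orientable case is recovered by taking $\Pi^{+} = \Pi$.

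I expect the substantive difficulty to lie entirely in \cite[Corollary~1.3]{BoLePoGeo} itself, which rests on the full Orbifold Theorem; our argument contributes only the passage from the oriented to the general statement. The one point demanding care is the verification that the normal core $K'$ remains both finite-index and torsion-free while becoming normal in $\Pi$---this is exactly what legitimises realising the orientation-reversing deck involution of $\hat{O}\to O$ on the level of the manifold $M$ rather than merely on $\hat{O}$---together with the observation that $\hat{O}$ and $O$ share the same universal orbifold cover $U$, so that the manifold furnished by Boileau--Leeb--Porti for $\hat{O}$ is simultaneously a cover of $O$.
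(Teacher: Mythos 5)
The paper does not actually prove this statement: it is imported verbatim from Boileau--Leeb--Porti, and the only indication of how the orientability hypothesis is dropped is a (hidden) margin note proposing exactly your reduction, i.e.\ passing to the orientation double cover. So your argument is the paper's intended argument carried out in detail, and the normal-core step you add is genuinely needed (the margin note glosses over it) in order to obtain a single finite action on a compact manifold covering all of $O$ rather than merely exhibiting $\hat O$ as very good. Two points in your write-up need a small repair. First, the subgroup $K=\pi_1(N)\trianglelefteq \Pi^{+}$ supplied by the oriented statement need not be torsion-free --- take $N$ a lens space --- so your justification that the normal core $K'$ is torsion-free can fail as stated; what is true, and is all your final step uses, is that $K$ acts freely on $U$ (it is the deck group of the manifold covering $U\to N$), and freeness of the action is inherited by the subgroup $K'\subseteq K$. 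Second, the claim that a bad $2$-suborbifold $S\subset\hat O$ projects to a bad $2$-suborbifold of $O$ is immediate only when $S$ is disjoint from, or coincides with, its image under the deck involution (in the latter case the quotient of a bad $2$-orbifold by an involution is still bad, since it shares the same universal orbifold cover); when $S$ meets its translate transversally one needs an innermost-circle exchange, or simply to quote the standard fact that the absence of bad $2$-suborbifolds is inherited by orbifold covers. With these two adjustments the proposal is a complete and correct justification of the unoriented statement from the oriented one.
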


%%%%%%%%%%
\begin{proof}[Proof of \Tr{Agol}]
Suppose first that $M$ admits an action $\Gamma \act M$ as above, which is in addition free. Then $Q:= M / \Gamma$ is
a closed 3-manifold, and $M$ is its universal cover because it is \sico. Moreover, $\Gamma \isom \pi_1(Q)$.

We may assume \obda\ that $Q$ is orientable by replacing $\Gamma$ by its subgroup of orientation-preserving elements.
Thus $Q$ is a closed, orientable, connected, 3-manifold. 
The fact that such a $Q$ has special universal cover is apparently well-known to experts. We reproduce a proof by Ian Agol \cite{AgolMO}.

If $\pi_1(Q)$ is finite, then its universal cover $M$ is homeomorphic to $\BS^3$ by the validity of the  Poincar\'e conjecture.

If $\pi_1(Q)$ is infinite and  $\pi_2(Q)$ is trivial, then we claim that the universal cover $M$ is homeomorphic to $\R^3$. Indeed, in this case, $Q$ has a geometric decomposition by the \GeThm\ \cite{ThuThr,PerEnt,PerRic}. If the decomposition is trivial, then $Q$ is modelled on one of the six Thurston geometries homeomorphic to $\R^3$, and hence the universal cover is $\R^3$. Otherwise, we apply the Virtually Haken conjecture, proved by Agol \cite{AgoVir},
which asserts that every compact, irreducible 3-manifold $Q$ with infinite $\pi_1(Q)$ is finitely covered by a Haken manifold $Q'$. The reader does not need to know what a  Haken manifold is, all we need is a result of Waldhausen  proving that any Haken manifold $Q'$ has universal cover homeomorphic to $\R^3$ \cite[Theorem 8.1]{WalIrr}. Since $Q'$ covers $Q$, we deduce that $M$ is homeomorphic to $\R^3$ as claimed.

If $\pi_1(Q)$ is infinite and  $\pi_2(Q)$ non-trivial, then it may be that $Q$ is modelled on the $\BS^2 \times \R$ geometry and $Q$ is homeomorphic to $RP3 \# RP3$ or $\BS^2 \times \BS^1$. In this case the number of ends of $\pi_1(Q)$, and $M$, is 2, and $M$ is homeomorphic to $\BS^2 \times \R$. 
Otherwise, $Q$ is a non-trivial connect sum by Papakyriakopoulos' sphere theorem \cite{PapDeh}, and we claim that the universal cover $M$ is a Cantor 3-sphere. Indeed, the connect summands have universal cover either $\BS^3$, or $\BS^2 \times \R$, or $\R^3$ by the above discussion. When forming connect sums, we remove open balls from each summand manifold, and glue their sphere boundaries together. The universal cover is obtained by gluing the universal covers of each summand punctured along balls, either finitely many in $\BS^3$, or infinitely many in $\BS^2 \times \R$ or $\R^3$. It is easy to see that such manifolds are built out of thrice punctured spheres, and hence the universal cover can be decomposed into thrice punctured spheres. It is not hard to see that such a manifold is homeomorphic to the Cantor 3-sphere. % by \Lr{}.

\medskip
It remains to consider the case where the action $\Gamma \act M$ is non-free, which we will be able to reduce to the free case. By \Tr{pardon new}, we may assume that $\Gamma \act M$ is smooth. Thus the quotient $O:= M / \Gamma$ is endowed with the structure of an orbifold. \Tr{orbi thm} implies that $O$ is finitely covered by a manifold $M'$. The universal cover of $M'$ covers $O$, and so it coincides with $M$ by the uniqueness of a simply connected cover. Recall that  $\pi_1(M')$ acts on its universal cover $M$ freely, \pd ly, and co-compactly. We have reduced to the free case, and so we can deduce that $M$ is special in all cases.
\end{proof}

\acknowledgement{
We thank Ryan Budney, Louis Funar, John Pardon, and Henry Wilton for discussions around \Tr{Agol}, and Ian Agol for providing most of the proof. We thank Johannes Carmesin, Bruno Zimmermann and Mathoverflow user Sam Nead for other helpful discussions.}

\bibliographystyle{plain}
\bibliography{collective}

\end{document}